\newtheorem{theorem}{Theorem}
\newtheorem{corollary}{Corollary}
\newtheorem{proposition}{Proposition}
\newtheorem{problem}{Problem}
\newtheorem{lemma}{Lemma}
\newtheorem{definition}{Definition}
\newtheorem{acknowledgements}{acknowledgements}
\DeclareMathOperator{\sgn}{sgn}
\newenvironment{proof}{\paragraph{{\it Proof}}}{\hfill$\square$}
\renewcommand{\@biblabel}[1]{\quad#1.}
\definecolor{Gray}{gray}{.25}
\begin{document}


\begin{center}
{\Large
\textbf\newline{Topological Representation of the Transit Sets of 
       $k$-Point Crossover Operators}
}
\newline

Manoj Changat\footnote{Department of Futures Studies, 
  University of Kerala, Trivandrum, IN 695 581, India, \{mchangat,ferdows.h.n,rshilpam,cyanabisha\}@gmail.com},
        Prasanth G.\ Narasimha-Shenoi\footnote{Department of Mathematics, 
  Government College, Chittur, Palakkad, IN 678 104, India, prasanthgns@gmail.com},
        Ferdoos Hossein Nezhad\textsuperscript{1},\\
        Matja{\v{z}} Kov{\v{s}}e\footnote{School of Basic Sciences, IIT Bhubaneswar, Bhubaneswar, India, kovse@iitbbs.ac.in},
        Shilpa Mohandas\textsuperscript{1}, 
        Abisha Ramachandran\textsuperscript{1},
        Peter F.\ Stadler 
\footnote{Bioinformatics Group, Department of Computer Science \&
  Interdisciplinary Center for Bioinformatics, Universit{\"a}t Leipzig,
  H{\"a}rtelstra{\ss}e 16-18, D-04107 Leipzig, Germany; 
  German Centre for Integrative Biodiversity Research 
  (iDiv) Halle-Jena-Leipzig, Competence Center for Scalable Data
  Services and Solutions Dresden-Leipzig, Leipzig Research Center for 
  Civilization Diseases, and Centre for Biotechnology and Biomedicine at 
  Leipzig University at Universit{\"a}t Leipzig;
  Max Planck Institute for Mathematics in the Sciences, Inselstra{\ss}e 22,
  D-04103 Leipzig, Germany; 
  RNomics Group, Fraunhofer Institute for Cell Therapy and Immunology,
  D-04103 Leipzig, Germany;
  Institute for Theoretical Chemistry, University of Vienna,
  W{\"a}hringerstrasse 17, A-1090 Wien, Austria;
  Santa Fe Insitute, 1399 Hyde Park Rd., Santa Fe NM 87501, USA., studla@bioinf.uni-leipzig.de}

 
\end{center}



\begin{abstract}
\justify
  $k$-point crossover operators and their recombination sets are studied
  from different perspectives. We show that transit functions of $k$-point
  crossover generate, for all $k>1$, the same convexity as the interval
  function of the underlying graph. This settles in the negative an open
  problem by Mulder about whether the geodesic convexity of a connected
  graph $G$ is uniquely determined by its interval function $I$.  The
  conjecture of Gitchoff and Wagner that for each transit set $R_k(x,y)$
  distinct from a hypercube there is a unique pair of parents from which it
  is generated is settled affirmatively.  Along the way we characterize
  transit functions whose underlying graphs are Hamming graphs, and those
  with underlying partial cube graphs.  For general values of $k$ it is
  shown that the transit sets of $k$-point crossover operators are the
  subsets with maximal Vapnik-Chervonenkis dimension. Moreover, the transit
  sets of $k$-point crossover on binary strings form topes of uniform
  oriented matroid of VC-dimension $k+1$. The Topological Representation
  Theorem for oriented matroids therefore implies that $k$-point crossover
  operators can be represented by pseudosphere arrangements. This provides
  the tools necessary to study the special case $k=2$ in detail.

\end{abstract}

\section{Introduction}

Crossover operators are a crucial component of Genetic Algorithms and
related approaches in Evolutionary Computation. Their purpose is to combine
the genetic information of two parents to produce one or more offsprings
that are ``mixtures'' of their parents. In this contribution we will be
concerned with the specific setting of crossover operators for strings of
fixed length $n$ over an alphabet $\mathcal{A}$ of $a\ge 2$ letters. Given
two parental strings $x=(x_1x_2\dots x_n)$ and $y=(y_1y_2\dots y_n)$ one
may for instance construct recombinant offsprings of the form $(x_1x_2\dots
x_iy_{i+1}y_{i+2}\dots y_n)$ and $(y_1y_2\dots y_ix_{i+1}x_{i+2}\dots
x_n)$. The index $i$ serves as a breakpoint at which the two parents
recombine. This so-called one-point crossover can be generalized to two or
more breakpoints.
\begin{definition}\label{def:xover}
  Given $x,y,z\in\mathcal{A}^n$ we say that $z$ is a $k$-point crossover
  offspring of $x$ and $y$ if there are indices
  $0=i_0\le i_1\le i_2 \le i_k=n$ so that
  for all $\ell$, $1\le\ell\le k$, either
  $z_j=x_j$ for all $j\in\{i_{\ell-1}+1,\dots, i_{\ell} \}$ or
  $z_j=y_j$ for all $j\in\{i_{\ell-1}+1,\dots, i_{\ell} \}$.
\end{definition}
Note that this definition states that $x$ and $y$ are broken up into
\emph{at most} $k$ intervals that are alternately included into $z$.  This
convention simplifies the mathematical treatment considerably and also
conforms to the usual practice of including crossovers with fewer than the
maximum number of breakpoints. Uniform crossover, where each letter $z_i$
is freely chosen from one of the two parents, is obtained by allowing
$k=n-1$ breakpoints. We note, furthermore, that our definition ensures that
the parental strings are also included in the set of possible offsprings.

Properties of $k$-point crossover have been studied extensively in the
past. \cite{English:97} described key algebraic properties and
isomorphisms between the search spaces induced by crossover and mutation
with small populations have been analyzed by \cite{Culberson:94}. A formal
treatment of multi-point crossover with an emphasis on disruption analysis
can be found in \cite{DeJong:92}. A general review of genetic algorithms
from the perspective of stochastic processes on populations can be found in
\cite{Schmitt:01}. In this context, crossover operators are represented by
stochastic matrices. A similar matrix-based formalism is explored in
\cite{Stadler:00a}. Coordinate transformations, more precisely the Walsh
transform \cite{Goldberg:89} and its generalizations to non-binary
alphabets \cite{Field:95} have played an important role in explaining the
functioning of GAs in terms of building blocks and the Schema theorem
\cite{Holland:75}.  As a generalization, an abstract treatment of crossover
in terms of equivalence relations has been given by \cite{Radcliffe:94}.

\cite{Gitchoff:96} proposed to consider the function $R: X\times X\to 2^X$
that assigns to each possible pair of parents the set of all possible
recombinants. They asked which properties of $R$ could be used to
characterize crossover operators in general and explored properties of
$k$-point crossover on strings. In particular, they noted the following
four properties:
\begin{description}
\item[(T1)]  $x,y\in R(x,y)$ for all $x,y\in X$,
\item[(T2)]  $R(x,y)=R(y,x)$ for all $x,y\in X$,
\item[(T3)]  $R(x,x)=\{x\}$ for all $x\in X$,
\item[(GW4)] $z\in R(x,y)$ implies $|R(x,z)|\le|R(x,y)|$.
\end{description}
Mulder introduced the concept of \emph{transit functions}
characterized by the axioms (T1), (T2), and (T3) as a unifying approach to
intervals, convexities, and betweenness in graphs and posets in last decade
of the 20th century. Available as preprint only but frequently cited for
more than a decade, the seminal paper was published only recently
\cite{Mulder:08}. For example, given a connected graph $G$, its geodetic
intervals, i.e., the sets of vertices lying on shortest paths between a
pair of prescribed endpoints $x,y\in V(G)$ form a transit function usually
denoted by $I_G(x,y)$ \cite{Mulder:80} and referred as the interval function of a graph $G$. Unequal crossover, where (T3)
is violated, has been rarely explored in the context of evolutionary
computation, which the exception of the work by \cite{Shpak:99}. In this
contribution we restrict ourselves exclusively to the simpler case of
homologous string recombination.  \emph{Thus, from here on we will assume
  that $R$ satisfies (T1), (T2), and (T3).}

A common interpretation of transit functions is to view $R(x,y)$ as the
subset of $X$ lying \texttt{between} $x$ and $y$. Indeed, a transit
function is a \emph{betweenness} if it satisfies the two additional axioms
\begin{description}
\item[(B1)] $z\in R(x,y)$ and $z\ne y$ implies $y\notin R(x,z)$.
\item[(B2)] $z\in R(x,y)$ implies $R(x,z)\subseteq R(x,y)$.
\end{description}
It is natural, therefore, to regard a pair of distinct points $x$ and $y$
without other points between them as \emph{adjacent}. The corresponding
graph $G_R$ has $X$ as its vertex set and $\{x,y\}\in E(G_R)$ if and only
if $R(x,y)=\{x,y\}$ and $x\ne y$. The graph $G_R$ is known as the
\emph{underlying graph} of $R$.

\cite{Moraglio:04} introduced the notion of \emph{geometric crossover
  operators} relative to a connected reference graph $G$ with vertex set
$X$ by requiring -- in our notation -- that $R(x,y)\subseteq I_G(x,y)$ for
all $x,y\in X$. In the setting of \cite{Moraglio:04}, the reference graph
$G$ was given externally in terms of a metric on $X$. When studying
crossover in its own right it seems natural to consider the transits sets
of $R$ in relation to the intervals of $G_R$ itself.  Hence we say that $R$
is \emph{MP-geometric} if
\begin{description}
  \item[(MG)] $R(x,y)\subseteq I_{G_R}(x,y)$ for all $x,y\in X$.
\end{description}
Note the condition (MG) is an axiom for transit functions independent of
any externally prescribed structure on $X$. \cite{Mulder:08} considered a
different notion of ``geometric'' referring transit functions that satisfy
(B2) and the axiom
\begin{description}
\item[(B3)] $z\in R(x,y)$ and $w\in R(x,z)$ implies $z\in R(w,y)$.
\end{description}
Mulder's version of ``geometric'' is less pertinent for our purposes
because crossover operators usually violate (B2).

Another interpretation of $R$, which is just as useful in the context of
crossover operators, is to regard $R(x,y)$ as the set of offsprings
reachable from the parents $x$ and $y$ in a single generation. It is
natural then to associate with $R$ a function $\widehat{R}: X\times X\to
2^X$ so that $z\in \widehat{R}(x,y)$ if and if only $z$ eventually can be generated
from $x$ and $y$ and all their following generations of
offsprings. Formally, $z\in \widehat{R}(x,y)$ if there is a finite sequence
of pairs $\{x_k,y_k\}$ so that $z\in R(x_m,y_m)$, $\{x_k,y_k\}\in
R(x_{k-1},y_{k-1})$ for all $k=1,\dots, m$, $x_0=x$, and $y_0=y$.  By
construction, $R(x,y)\subseteq \widehat{R}(x,y)$ for all $x,y\in X$.  If
$R$ is a transit function, then $\widehat{R}$ is also a transit function.

We say that $R(x,y)$ is \emph{closed} if $R(x,y)=\widehat{R}(x,y)$.
Equivalently, a transit set $R(x,y)$ is closed if and only if
$R(u,v)\subseteq R(x,y)$ holds for all $u,v\in R(x,y)$, since in this case
nothing can be generated from the children of $x$ and $y$ that is not
accessible already from $x$ and $y$ itself. In particular, all singletons
and all adjacencies, i.e., individual vertices and the edges of $G_R$, are
always closed. A transit function $R$ is called \emph{monotone} if it
satisfies
\begin{description}
\item[(M)] For all $x,y\in X$ and $u,v\in R(x,y)$ implies $R(u,v)\subseteq
  R(x,y)$,
\end{description}
i.e., if all transit sets are closed. By construction, $\widehat{R}$
satisfies (M) for any transit function $R$. A simple argument\footnote{(i)
  $R(x,y)\subseteq \widehat{R}(x,y)$ by definition, (ii) $R(x,y)=\{x,y\}$
  implies $\widehat{R}(x,y)=\{x,y\}$, (iii) if $\widehat{R}(x,y)=\{x,y\}$
  but $R(x,y)\ne\{x,y\}$ either (i) or axiom (T1) is violated.} shows that
$\widehat{R}(x,y)=\{x,y\}$ if and only if $R(x,y)=\{x,y\}$. Thus $R$ and
$\widehat{R}$ have the same underlying graph $G_{\widehat{R}}=G_R$. The
sets $\{\widehat{R}(x,y) | x,y\in X \}$, finally, generate a convexity
$\mathfrak{C}_R$ consisting of all intersections of the (finitely many)
transit sets $\widehat{R}(x,y)$.

One of the most fruitful lines of research in the field of transit
functions is the search for axiomatic characterizations of a wide variety
of different types of graphs and other discrete structures in terms of
their transit functions.  \cite{Nebesky:01} showed that a function $I: V
\times V \rightarrow 2^V$ is the geodesic interval function of a
connected graph if and only if $I$ satisfies a set of axioms that are
phrased in terms of $I$ only.  Later, \cite{MulderNebesky:09} improved the
axiomatic characterization of $I(u,v)$ by formulating a nice set of
(minimal) axioms. The all-paths function $A$ of a connected graph $G$
(defined as $A(u,v)= \{z\in V(G): z$ lies on some $u,v$-path in $G\}$)
admits a similar axiomatic characterization \cite{Changat:01}.  These
results immediately raise the question whether other types of transit
functions can be characterized in terms of transit axioms only.

Since $k$-point crossover on strings over a fixed alphabet forms a rather
specialised class of recombination operators we ask here whether it can
be defined completely in terms of properties of its transit function
$R_k$. Beyond the immediate interest in $k$-point crossover operators we
can hope in this manner to identify generic properties of crossover
operators also on more general sets $X$.

This contribution is organised as follows. In section~\ref{sect:HG} we
consider transit functions whose underlying graphs $G_{R}$ are Hamming
graphs since, as we show in section~\ref{sect:basic}, $k$-point crossover
belongs to this class. We then investigate the properties of $k$-point
crossover in more detail from the point of view of transit functions.  In
section~\ref{sect:GTk} we switch to a graph-theoretical perspective and
derive a complete characterization of $k$-point crossover on binary
alphabets, making use of key properties of partial cubes. In order to
generalize these results we consider topological aspects of $k$-point
crossover in section~\ref{sect:topological} and explore its relationship
with oriented matroids. We conclude our presentation with several open
questions.

\section{Hamming Graphs and their Geodesic Intervals}
\label{sect:HG}

In most applications, $k$-point crossover will be applied to binary strings
or, less frequently, to strings over a larger, fixed-size alphabet
$\mathcal{A}$. In a population genetics context, however, the number of
allels may be different for each locus, hence we consider the most general
case here, where each sequence position is taken from a distinct alphabet
$\mathcal{A}_i$ with $a_i:=|\mathcal{A}_i|\ge2$ for $1\le i\ne n$. The
Hamming graph $\prod_i K_{a_i}$ is the Cartesian products of complete
graphs $K_{a_i}$ with $a_i$ vertices; we refer to the book by
\cite{handbookproductgraphs:2011} for more details on Hamming graphs and
product graphs in general. The special case $a_i=2$ for all $i$ is usually
called $n$-dimensional hypercube $K_2^n$. The shortest path distance on
$\prod_i K_{a_i}$ is the Hamming distance $d(x,y)$, which counts the number
of sequence positions at which the string $x$ and $y$ differ.

Given a transit function $R$ and a point $x \in X$ let $\delta(x)=|\{y \in
X \quad | \quad |R(x,y)|=2\}|$, i.e., $\delta(x)=\delta_R(x)$ is the degree
of $x$ in the underlying graph $G_R$. We write $\delta(R) = \max_{x\in X}
\delta(x)$ for the maximal degree of the underlying graph.

The purpose of this section is to characterize transit functions whose
underlying graphs are Hamming graphs. Our starting point is the following
characterization of hypercubes, which follows from results by
\cite{Mulder:80} and \cite{Laborde:82}:
\begin{proposition}
\label{prop:hypercube}
  Suppose $G$ is connected and each pair of distinct adjacent edges lies in
  exactly one 4-cycle. Then $G$ is isomorphic to $n$-dimensional hypercube
  if and only if the minimum degree $\delta$ of $G$ is finite and $|V(G)|=
  2^{\delta}$.
\end{proposition}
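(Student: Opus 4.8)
The ``only if'' direction is immediate: if $G$ is isomorphic to $K_2^n$, then $G$ is $n$-regular, so its minimum degree $\delta$ equals $n$ and is finite, and $|V(G)|=2^n=2^{\delta}$. So the work is entirely in the converse, and the plan is as follows. Assume $G$ is connected, every pair of distinct adjacent edges lies in exactly one $4$-cycle, $\delta:=\delta(G)$ is finite, and $|V(G)|=2^{\delta}$; I want to conclude $G\cong K_2^{\delta}$.

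The first step is to recast the hypothesis on adjacent edges as a condition on common neighbours. A $4$-cycle containing the two adjacent edges $uv$ and $vw$ (with $u\neq w$) must have the form $u-v-w-x-u$ for some vertex $x\notin\{u,v,w\}$ adjacent to both $u$ and $w$; hence the number of such $4$-cycles is exactly the number of common neighbours of $u$ and $w$ other than $v$. Therefore the hypothesis is equivalent to saying that whenever two vertices have a common neighbour they have precisely two common neighbours, i.e.\ that $G$ is a $(0,2)$-\emph{graph}: any two distinct vertices have either $0$ or $2$ common neighbours. (I would also note the trivial converse, that in a $(0,2)$-graph every $2$-path $u-v-w$ extends to exactly one $4$-cycle, so the two formulations coincide.)

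The substantive step is then the classical fact that a connected $(0,2)$-graph with finite minimum degree $\delta$ has at most $2^{\delta}$ vertices, with equality precisely when it is isomorphic to $K_2^{\delta}$; this is what we extract from the work of Mulder and Laborde. Since our hypothesis $|V(G)|=2^{\delta}$ puts $G$ in the extremal case, this immediately gives $G\cong K_2^{\delta}$. For self-containedness I would sketch the mechanism: fix a vertex $v_0$ of degree $\delta$ and show by induction on the distance from $v_0$ that in a $(0,2)$-graph every interval $I_{G}(v_0,u)$ induces a hypercube of dimension $d(v_0,u)$ and that a vertex at distance $k$ from $v_0$ has exactly $k$ neighbours at distance $k-1$; it then follows that $u\mapsto I_{G}(v_0,u)\cap N(v_0)$ is a well-defined injection of $V(G)$ into the $2^{\delta}$ subsets of $N(v_0)$, and the cardinality hypothesis forces this map to be a bijection and, after checking it preserves and reflects adjacency, an isomorphism onto $K_2^{\delta}$.

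The reduction to $(0,2)$-graphs and the observation that the numerical conditions hold automatically for hypercubes are routine. The hard part is the inductive interval analysis, and in particular proving that the ``down-degree'' of a vertex equals its distance to $v_0$ (so that the labelling of vertices by subsets of $N(v_0)$ is both defined and injective) and that $G$ carries no edges beyond those of the cube. This is exactly the content of the cited theorems of Mulder and Laborde, which I would therefore invoke rather than reprove in full.
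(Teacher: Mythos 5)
Your proposal is correct and follows essentially the same route as the paper, which states this proposition without proof as a consequence of the cited results of Mulder and Laborde on $(0,2)$-graphs: you correctly observe that the ``every pair of distinct adjacent edges lies in exactly one $4$-cycle'' hypothesis is just a reformulation of the $(0,2)$-graph condition, and then invoke the classical extremal theorem that a connected $(0,2)$-graph of minimum degree $\delta$ has at most $2^{\delta}$ vertices with equality exactly for $K_2^{\delta}$. Your sketch of the interval/down-degree mechanism behind that theorem is accurate and adds detail beyond what the paper records.
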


\begin{figure}
\begin{center}
  \includegraphics[width=0.6\textwidth]{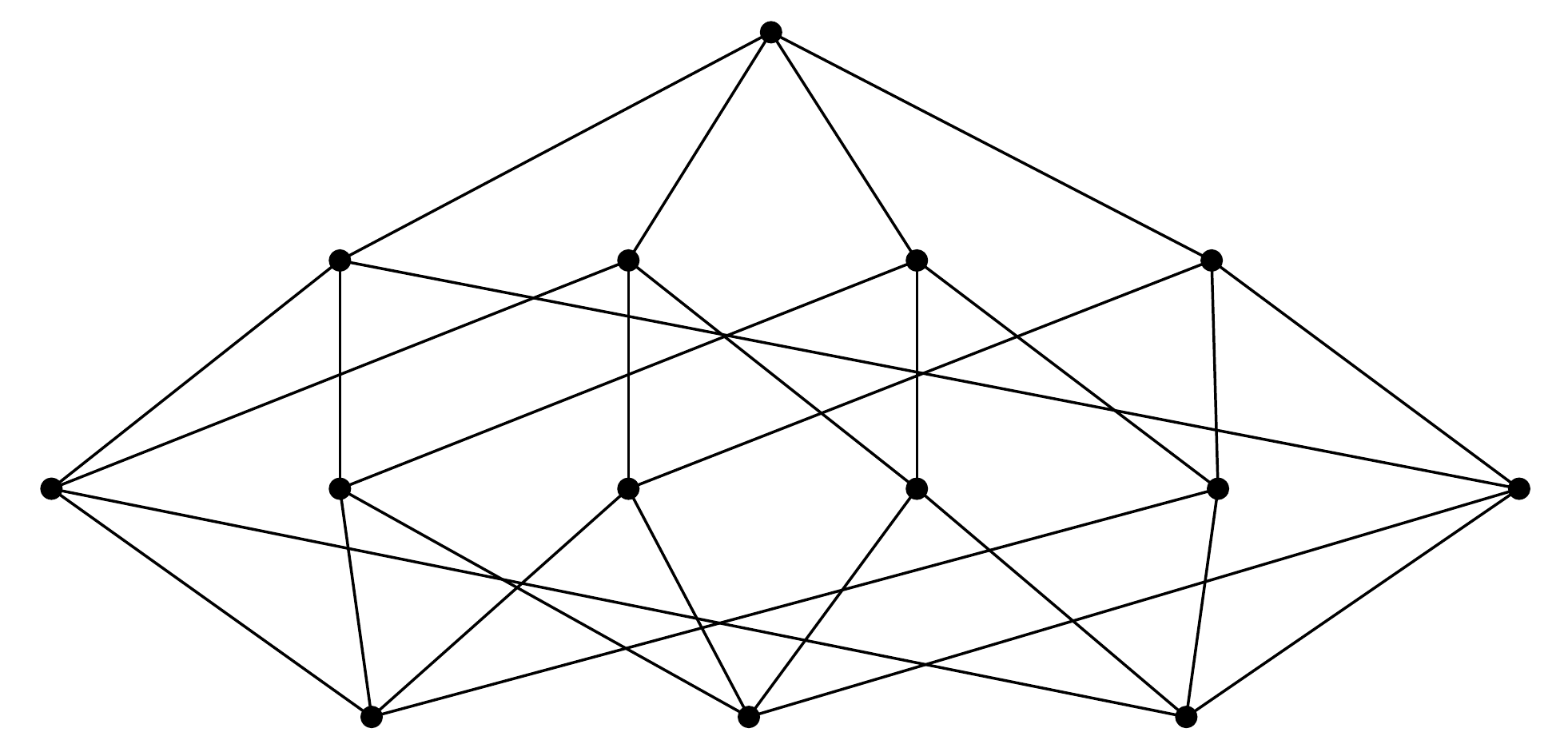}
  \caption{The last condition of Proposition \ref{prop:hypercube}, i.e.,
    $|V(G)|= 2^{\delta}$, is necessary as demonstrated by this example of a
    $(0,2)$-graph that is not a hypergraph \cite{Mulder:80}.  It
    satisfies all but the last requirement from the proposition.}
\label{fig:02graph}
\end{center}
\end{figure}

Graphs with the property that any pair of vertices has zero or exactly 2
common neighbours are called $(0,2)$-graphs \cite{Mulder:80}. We note
that the condition $|V(G)|=2^{\delta}$ in Proposition~\ref{prop:hypercube}
is necessary as demonstrated by the example in Fig.~\ref{fig:02graph}.
Proposition \ref{prop:hypercube} can be translated into the language of
transit functions as follows:
\begin{corollary}
\label{cor:hypercube}
  Let $R$ be a transit function on a set $X$ with a connected underlying
  graph. Then the underlying graph $G_R$ is isomorphic to $n$-dimensional
  hypercube $K_2^n$ if and only if $R$ satisfies:
  \begin{description}
  \item[{\rm(A1)}] For every $x,u,v$ such that $|R(x,u)|=|R(x,v)|=2$ there exist
    unique $y$ such that $|R(y,u)|=|R(y,v)|=2$,
  \item[{\rm(A2)}]  $\delta(R)=n$ and $|X|=2^{n}$.
  \end{description}
\end{corollary}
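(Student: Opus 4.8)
The plan is to reduce Corollary~\ref{cor:hypercube} to Proposition~\ref{prop:hypercube} by translating its two hypotheses -- ``each pair of distinct adjacent edges lies in exactly one $4$-cycle'' and ``$\delta$ finite with $|V(G)|=2^{\delta}$'' -- into conditions on the transit function $R$. Throughout I would use the dictionary $|R(a,b)|=2\iff ab\in E(G_R)$, which in particular forces $a\ne b$; hence $\delta_R(x)=|\{u:|R(x,u)|=2\}|$ is the degree of $x$ in $G_R$ and $\delta(R)$ is its maximum degree. Once the translation is in place, both directions of the corollary fall out of Proposition~\ref{prop:hypercube} applied to $G_R$, which is connected by hypothesis.

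First I would verify that (A1) is exactly the transit-function rendering of the $4$-cycle hypothesis. A $4$-cycle of $G_R$ through two distinct adjacent edges $xu$ and $xv$ (so $u\ne v$) is precisely a cycle $x\,u\,y\,v\,x$ with $y\ne x$ adjacent to both $u$ and $v$; hence ``every pair of distinct adjacent edges lies in exactly one $4$-cycle'' says that whenever $u\ne v$ and $|R(x,u)|=|R(x,v)|=2$ there is a unique $y\ne x$ with $|R(y,u)|=|R(y,v)|=2$, i.e.\ (A1). Equivalently, (A1) asserts that $G_R$ is a \emph{$(0,2)$-graph}: a pair $u,v$ that has a common neighbour then has, besides it, exactly one further common neighbour, hence exactly two, while a pair with no common neighbour is unconstrained, as in the definition of $(0,2)$-graphs.

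The remaining point is the degree hypothesis, and here the one genuinely non-formal step is needed: a connected $(0,2)$-graph is regular, so that the \emph{minimum} degree used in Proposition~\ref{prop:hypercube} coincides with $\delta(R)$. For an edge $xy$, each $u\in N(x)\setminus\{y\}$ forms with $y$ a pair whose common neighbours are $x$ and a unique further vertex $\phi(u)\ne x$, which lies in $N(y)\setminus\{x\}$; the analogously defined map $\psi\colon N(y)\setminus\{x\}\to N(x)\setminus\{y\}$ is inverse to $\phi$ (if $v=\phi(u)$, then $u$ is a common neighbour of $v$ and $x$ distinct from $y$, so uniqueness gives $\psi(v)=u$, and symmetrically). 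Thus $d(x)=d(y)$, and connectedness propagates this throughout $G_R$. Consequently, under (A1) the minimum degree of $G_R$ equals $\delta(R)$, and (A2) says precisely that this common value $n$ is finite and $|V(G_R)|=|X|=2^{n}$.

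With the dictionary complete the corollary follows. If $R$ satisfies (A1) and (A2), then $G_R$ is connected, every pair of distinct adjacent edges lies in exactly one $4$-cycle, the minimum degree is finite, and $|V(G_R)|=2^{\delta}$, so Proposition~\ref{prop:hypercube} yields $G_R\cong K_2^{\,n}$. Conversely, in $K_2^{\,n}$ two vertices at Hamming distance $2$ have exactly two common neighbours and vertices at every other distance have none, so $K_2^{\,n}$ is a $(0,2)$-graph and (A1) holds, while $\delta(R)=n$ and $|X|=2^{n}$ give (A2). I expect the main obstacle to be care rather than depth: getting the quantifiers in (A1) right (the witness $y$ must be read as $\ne x$, and the two adjacent edges as distinct, $u\ne v$, since $x$ itself trivially satisfies $|R(x,u)|=|R(x,v)|=2$) and supplying the regularity of connected $(0,2)$-graphs so that the maximum degree $\delta(R)$ appearing in (A2) may stand in for the minimum degree in Proposition~\ref{prop:hypercube}.
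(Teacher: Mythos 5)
Your proof is correct and follows essentially the same route the paper intends: the paper offers no written proof of the corollary, presenting it as a direct translation of Proposition~\ref{prop:hypercube}, and your dictionary between (A1) and the unique-$4$-cycle condition is exactly that translation (with the quantifier caveats $u\ne v$ and $y\ne x$ rightly made explicit). Your one substantive addition --- that connected $(0,2)$-graphs are regular, so the maximum degree $\delta(R)$ in (A2) may stand in for the minimum degree of Proposition~\ref{prop:hypercube} --- is a genuinely necessary step the paper glosses over, and your bijection argument $\phi,\psi$ between $N(x)\setminus\{y\}$ and $N(y)\setminus\{x\}$ establishes it correctly.
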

Later, \cite{mollard:91} generalized Proposition~\ref{prop:hypercube} to
arbitrary Hamming graphs. For any vertex $x$ in the graph $G$ let $N_i(x)$
denote the number of maximal $i$-cliques $K_i$ in $G$ that contain the
vertex $x$.

\begin{proposition}[\cite{mollard:91}]
\label{prop:HammingGcharaterization}
  Let $G$ be a simple connected graph such that two non-adjacent vertices
  in $G$ either have exactly $2$ common neighbors or none at all, and
  suppose $G$ has neither $K_4 \setminus e$ nor $K_2 \Box K_3 \setminus e$
  (Figure \ref{fig:forbiddensubgraphsHamming}) as induced subgraph.  Then
  $N_i(x)$ is independent of $x$ and $G$ is isomorphic to the Hamming graph
  if and only if $|V(G)|=\prod_{h=1}^{p}h^{N_i(x)}$, where $p$ is the
  maximum integer such that $N_p(x)$ is nonzero.
\end{proposition}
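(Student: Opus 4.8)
The plan is to prove the two implications separately; the ``only if'' direction is a routine verification, and essentially all the work is in the ``if'' direction, which I would organize as an induction that strips off one complete Cartesian factor at a time. Suppose first that $G=\prod_{i=1}^{n}K_{a_i}$ is a Hamming graph. Two vertices at Hamming distance $\ge 2$ can have a common neighbour only when they differ in exactly two coordinates, and then the common neighbours are precisely the two remaining corners of the square they span, so the $0$-or-$2$ condition holds. A maximal clique of $\prod_i K_{a_i}$ is obtained by fixing all coordinates but one, say the $i$-th; in particular the triangles of $G$ all lie inside a single factor, which rules out an induced $K_4\setminus e$ (two triangles sharing an edge would lie in a common clique), while a short check on the coordinates in which the six vertices differ rules out an induced $K_2\Box K_3\setminus e$. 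Since the maximal cliques through a fixed $x$ correspond to the factors, $N_i(x)=|\{j:a_j=i\}|$ is independent of $x$, and $|V(G)|=\prod_j a_j=\prod_{h\ge 1}h^{N_h(x)}$.

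For the converse I would first extract the local structure that the hypotheses force. Because $K_4\setminus e$ is a forbidden induced subgraph, any two distinct maximal cliques meet in at most one vertex, so the maximal cliques partition $E(G)$. Fix $x$ and two maximal cliques $C\ne D$ through $x$; for $u\in C\setminus\{x\}$ and $v\in D\setminus\{x\}$ one checks, using $K_4\setminus e$-freeness and maximality of $C$, that $u\not\sim v$, so by the $0$-or-$2$ hypothesis $u,v$ have exactly one common neighbour $w\ne x$, and a further short case analysis shows that $\{x,u,w,v\}$ induces a $4$-cycle and that the local grid spanned by $C$ and $D$ closes up. From this one obtains, for every maximal clique $E\ne C$ through $x$ and every $y\in C\setminus\{x\}$, a well-defined ``parallel'' maximal clique $E_y$ through $y$ with $|E_y|=|E|$; the map $E\mapsto E_y$ is a size-preserving bijection from the maximal cliques through $x$ to those through $y$ fixing $C$, so $N_i(x)=N_i(y)$ whenever $x\sim y$, and by connectedness $N_i(x)$ is independent of $x$, proving the first assertion.

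The heart of the proof is then an induction on $N:=\sum_i N_i(x)$, the number of maximal cliques through a vertex. Fix a maximal clique $C\cong K_a$, call two maximal cliques \emph{parallel} if they are opposite sides of one of the induced $4$-cycles constructed above, and pass to the transitive closure. The key lemma is that this relation is already an equivalence relation, that the class $\mathcal F$ of $C$ partitions $V(G)$ into maximal cliques all isomorphic to $K_a$ (the ``$C$-fibres''), and that collapsing each fibre to a point yields a graph $H$ with $G\cong K_a\Box H$. Showing that the fibre through a vertex is well defined --- equivalently, that the ``direction'' of a clique propagates coherently around the whole graph --- is exactly where the exclusion of $K_2\Box K_3\setminus e$ is needed, since that graph is the minimal obstruction to transporting a clique consistently across another factor. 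One then verifies that $H$ again satisfies the $0$-or-$2$ condition, is $(K_4\setminus e,\,K_2\Box K_3\setminus e)$-free, and has $N_i^{H}(x)=N_i(x)$ for $i\ne a$ and $N_a^{H}(x)=N_a(x)-1$, and --- crucially --- that the cardinality hypothesis descends: since every fibre has size exactly $a$, $|V(H)|=|V(G)|/a=\prod_h h^{N_h^{H}}$. By induction $H\cong\prod_j K_{b_j}$, so $G\cong K_a\Box H$ is a Hamming graph; the base case $N=1$ is $G=K_a$.

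The main obstacle, and the point where the argument genuinely parallels Proposition~\ref{prop:hypercube} (whose cardinality hypothesis $|V(G)|=2^{\delta}$ is indispensable, cf.\ Fig.~\ref{fig:02graph}), is the passage from the purely local product structure --- which holds as soon as the $0$-or-$2$ condition and the two forbidden subgraphs are imposed --- to a genuine global Cartesian decomposition. Locally $G$ always looks like a Hamming graph near every vertex, but without the cardinality identity it could still be a proper ``Hamming covering'' in which the parallelism classes wrap around and the fibres are smaller than $a$; it is precisely the count $|V(G)|=\prod_h h^{N_h(x)}$ that forces each fibre to attain the full size of a maximal clique and rules this out. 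Verifying transitivity of the parallelism relation from the induced-subgraph hypotheses alone is the most delicate step, and it is there that $K_2\Box K_3\setminus e$-freeness does the real work.
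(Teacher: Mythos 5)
First, a caveat: the paper states this proposition without proof, citing \cite{mollard:91}, so there is no in-paper argument to compare yours against; I can only assess your outline on its own terms. The easy parts of your plan are sound: the forward direction, the observation that $K_4\setminus e$-freeness forces distinct maximal cliques to meet in at most one vertex, the induced $4$-cycles spanned by two cliques through a common vertex, the use of $K_2\Box K_3\setminus e$-freeness to close up the local grid, and the resulting constancy of $N_i(x)$ are all correct and are indeed where those two hypotheses do their work. The gap is in the converse, at exactly the step you flag as delicate. Your ``key lemma'' --- that the parallelism class of a maximal clique $C$ partitions $V(G)$ into $K_a$-fibres with $G\cong K_a\Box H$ --- is precisely the local-to-global statement that can fail under the local hypotheses alone (Fig.~\ref{fig:02graph} is a counterexample already for $a=2$), and you never say \emph{how} the hypothesis $|V(G)|=\prod_h h^{N_h(x)}$ is brought to bear on it; it is invoked only as a slogan. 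Moreover the slogan points the wrong way: since your grid lemma shows that the parallel translate of a maximal clique along an edge is again a clique of the same size $a$, every fibre is a union of $K_a$'s and so has size \emph{at least} $a$; a fibre can only be too large (when transporting $C$ around a closed walk returns to a different maximal clique through the same vertex), never ``smaller than $a$'' as you assert. Consequently the quotient $H$ cannot even be formed, let alone shown to inherit the hypotheses and the cardinality identity, until the monodromy problem is resolved --- and resolving it is the theorem. As organized, your induction is circular.

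What is missing is the mechanism by which a single global count controls the local gluing. The standard way (this is how the cardinality hypothesis functions in the hypercube case of Proposition~\ref{prop:hypercube}, following \cite{Laborde:82}, and in \cite{mollard:91}) is an inequality-plus-rigidity argument: from the $0$-or-$2$ condition and the two forbidden subgraphs one constructs, by induction on the distance from a fixed base vertex $x$, a well-defined injective coordinatization of $V(G)$ into $\prod_i K_{a_i}$ (each vertex is recorded by which maximal cliques through $x$ its geodesics leave along, and through which of their vertices), which yields $|V(G)|\le\prod_h h^{N_h(x)}$; the cardinality hypothesis then says this map is a bijection, and one checks it is an isomorphism. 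Some argument of this shape --- an upper bound extracted from the local structure, with equality forcing global rigidity --- is the actual content of the converse, and it is the one piece your outline does not contain.
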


\begin{figure}[h]
\begin{center}
  \includegraphics[width=0.6\textwidth,clip=]{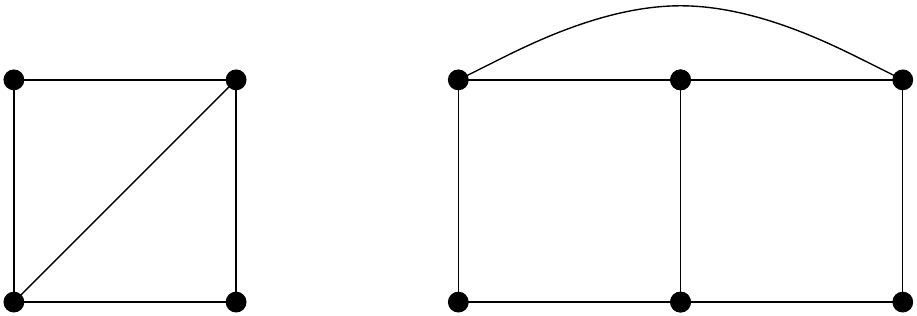}
  \caption{The forbidden induced subgraphs $K_4 \setminus e$ and $K_2 \Box
    K_3 \setminus e$ appearing in Proposition
    \ref{prop:HammingGcharaterization}.}
\label{fig:forbiddensubgraphsHamming}
\end{center}
\end{figure}

These results can again be translated into the language of transit
functions:
\begin{corollary}
  \label{cor:A1-A4}
  Let $R$ be a transit function with a connected underlying graph. Then the
  underlying graph $G_R$ is isomorphic to Hamming graph $K_a^n$ if and only
  if $R$ satisfies:
  \begin{description}
  \item[{\rm(A1)}] For every $x,u,v$ such that $|R(x,u)|=|R(x,v)|=2$ there
    exists unique $y$ such that $|R(y,u)|=|R(y,v)|=2$,
  \item[{\rm(A2')}]  $\delta(R)=n(a-1)$ and $|X|=a^{n}$,
  \item[{\rm(A3)}] There exist no $x,y,u,v$ such that
    $|R(x,u)|=|R(x,v)|=|R(y,u)|=|R(y,v)|=|R(x,y)|=2$ and $|R(u,v)|>2$,
  \item[{\rm(A4)}] There exist no $x,y,u,v, w, z$ such that\\
    $|R(x,u)|=|R(x,v)|=|R(y,u)|=|R(y,v)|=|R(v,w)|=|R(y,z)|=|R(w,z)|=|R(x,w)|=2$
    and 
    $|R(u,v)|, |R(u,w)|,|R(u,z)|,|R(x,y)|,|R(x,z)|,|R(v,z)|,|R(y,w)| > 2$.
\end{description}
\end{corollary}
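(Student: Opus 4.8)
The plan is to read the structural axioms (A1), (A3), (A4) and the quantitative condition (A2') as the transit-function translations of the three structural hypotheses (call them (i)--(iv): $G$ simple and connected, non-adjacent vertices having $0$ or $2$ common neighbours, no induced $K_4\setminus e$, no induced $K_2\,\Box\,K_3\setminus e$) together with the order condition of Proposition~\ref{prop:HammingGcharaterization}, and then to apply that proposition to the graph $G:=G_R$; this yields both directions of the ``if and only if'' simultaneously.

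The first step is to fix the dictionary between $R$ and $G_R$. Because $R$ is a transit function, $\{x,y\}\in E(G_R)$ with $x\neq y$ holds exactly when $|R(x,y)|=2$, and $x,y$ are distinct and non-adjacent in $G_R$ exactly when $|R(x,y)|>2$; hence a vertex $w$ is a common neighbour of $u$ and $v$ in $G_R$ precisely when $|R(w,u)|=|R(w,v)|=2$. In particular $G_R$ has no loops (since $R(x,x)=\{x\}$), is simple, and is connected by hypothesis, so hypothesis~(i) holds automatically.

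The second step is to match the remaining hypotheses term by term. Reading (A1) as in Corollary~\ref{cor:hypercube} --- for a pair $u,v$ admitting a common neighbour $x$ there is a unique further common neighbour $y\neq x$ --- (A1) says exactly that two non-adjacent vertices of $G_R$ have either no common neighbour or precisely two; the ``no more than two'' part follows by applying (A1) a second time, since a putative third common neighbour $z$ would make the vertex produced from $z$ non-unique. Axiom (A3) forbids four (necessarily distinct) vertices carrying the five edges $xu,xv,yu,yv,xy$ and the single non-edge $uv$, i.e.\ an induced $K_4\setminus e$. Axiom (A4) does the same for the six-vertex configuration it lists: the eight prescribed adjacencies and seven prescribed non-adjacencies together exhaust all $\binom{6}{2}=15$ pairs, and the induced subgraph they determine is $K_2\,\Box\,K_3\setminus e$ --- realising the prism as $\{1,2,3\}\times\{a,b\}$ with the edge $(1,a)(2,a)$ removed, the assignment $u\mapsto(1,a)$, $z\mapsto(2,a)$, $y\mapsto(3,a)$, $x\mapsto(1,b)$, $w\mapsto(2,b)$, $v\mapsto(3,b)$ exhibits the isomorphism. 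Thus (A1), (A3), (A4) are precisely hypotheses (ii), (iii), (iv) of Proposition~\ref{prop:HammingGcharaterization}; running the translation backwards, $K_a^n$ itself satisfies all three, which takes care of the ``only if'' direction.

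The last step is to invoke Proposition~\ref{prop:HammingGcharaterization}: under (A1), (A3), (A4) the number $N_i(x)$ of maximal $i$-cliques through $x$ is independent of $x$, so $G_R$ is regular --- write $N_i$ for the common value and note its degree equals $\delta(R)$ --- and $G_R$ is a Hamming graph if and only if $|V(G_R)|=\prod_i i^{N_i}$. For $K_a^n$ this order condition becomes $|V|=a^n$ and the common degree is $n(a-1)$, which is exactly the data (A2') supplies, and conversely one must check that $|X|=a^n$ together with $\delta(R)=n(a-1)$ forces $G_R\cong K_a^n$ rather than merely ``$G_R$ is a Hamming graph of order $a^n$''. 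Combined with the equivalences of the previous step this proves the corollary, and when $a=2$ the scheme collapses to Corollary~\ref{cor:hypercube} since (A3) and (A4) are then vacuous (their configurations contain triangles). I expect the genuine obstacle to be exactly this last reconciliation --- extracting $G_R\cong K_a^n$ from the order-and-degree data of (A2'), where one has to exclude alternative factorisations of $a^n$ as $\prod_i i^{N_i}$ compatible with $\sum_i (i-1)N_i=n(a-1)$; the only other point needing care, though it is purely mechanical, is the verification that the six-vertex pattern of (A4) is indeed $K_2\,\Box\,K_3\setminus e$.
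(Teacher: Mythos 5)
Your approach is exactly the one the paper intends: the paper offers no written proof of Corollary~\ref{cor:A1-A4} beyond the remark that Proposition~\ref{prop:HammingGcharaterization} ``can again be translated into the language of transit functions'', and your dictionary ($|R(x,y)|=2$ iff $xy\in E(G_R)$), your reading of (A3) as forbidding an induced $K_4\setminus e$, and your explicit isomorphism exhibiting the (A4) configuration as $K_2\,\Box\,K_3\setminus e$ are all correct and are the intended content. Two caveats on the translation itself: (A1) as literally written quantifies over \emph{all} pairs $u,v$ with a common neighbour, and for $a\ge 3$ two \emph{adjacent} vertices of $K_a^n$ have $a-2$ common neighbours, so the ``only if'' direction already forces the tacit restriction to non-adjacent $u,v$ that you silently inserted; and strictly read, ``unique $y$'' is satisfied by $y=x$ itself, so the axiom needs the reading ``unique $y\neq x$''. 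These are defects of the statement, not of your argument, but they should be said out loud.

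The step you flag as ``the genuine obstacle'' --- extracting $G_R\cong K_a^n$, rather than merely ``some Hamming graph on $a^n$ vertices'', from (A2') --- is not just the hard part; as stated it actually fails, so your instinct to isolate it was right and you should not expect to close it. Proposition~\ref{prop:HammingGcharaterization} only delivers $G_R\cong\prod_h K_h^{N_h}$ with $\prod_h h^{N_h}=a^n$ and $\sum_h (h-1)N_h=n(a-1)$, and these two equations do not force $N_a=n$: for $a=6$, $n=2$ the graph $K_2\,\Box\,K_2\,\Box\,K_9$ has $36=6^2$ vertices, is $10=2\cdot 5$ regular, and, being a Hamming graph, satisfies (A1), (A3), (A4), yet is not $K_6^2$; similarly $K_2^5\,\Box\,K_8$ defeats the case $a=4$, $n=4$. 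The identity $(h'-1)(h''-1)>0$ shows the intended factorization is recovered uniquely when $a$ is \emph{prime} (then $h_i=a^{e_i}$ and $a^{e}-1\ge e(a-1)$ with equality only for $e=1$), so the corollary is correct for prime $a$ --- in particular it does collapse to Corollary~\ref{cor:hypercube} for $a=2$, as you note --- but for composite $a$ either an additional axiom is needed (e.g.\ one bounding clique sizes, so that all $N_h$ with $h>a$ vanish) or the conclusion must be weakened to ``$G_R$ is a Hamming graph''. Since the paper applies the corollary only via Lemma~\ref{lem:GRk}, where the alphabet sizes are given, nothing downstream breaks, but your proof cannot be completed as the statement stands.
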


The representation of Hamming graphs as $n$-fold Cartesian products of
complete graphs $H=\prod_{i=1}^n K_{a_i}$ implies a ``coordinatization'',
that is, a labeling of the vertices the reflects this product structure.
The geodesic intervals in Hamming graphs then have very simple description:
\begin{equation}
  I_{H}(x,y) = \left\{ z = (z_1,z_2,\dots z_n) \big| z_i\in \{x_i,y_i\} \textrm{ for } 
  1\le i\le n \right\}
\end{equation}
where $(x_1,x_2,\dots x_n)$ and $(y_1,y_2,\dots y_n)$ are the coordinates
of the vertices $x$ and $y$.  Thus $G_{I_H(x,y)}$ is a subhypercube of
dimension $d(x,y)$ as shown in example in the book by
\cite{handbookproductgraphs:2011}.  The intervals of Hamming graphs have
several properties that will be useful for our purposes.  A graph is called
antipodal if for every vertex $v$ there is a unique ``antipodal vertex''
$\bar v$ with maximum distance from $v$.

\begin{lemma}
  Let $Q$ be an induced sub-hypercube of a Hamming graph $H$.  Then for
  every $x\in Q$ there is a unique vertex $\bar x\in Q$ so that $Q =
  I_{H}(x,\bar x)$.
\end{lemma}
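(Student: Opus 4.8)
The plan is to exploit the coordinatization of the Hamming graph $H=\prod_{i=1}^{n}K_{a_i}$ supplied before the statement. Fix $x\in Q$ and let $d$ be the dimension of $Q$, so that $Q\cong K_2^{d}$ and $|V(Q)|=2^{d}$ (the cases $d\le 1$ being trivial). Colour every edge of $H$ by the unique coordinate in which its endpoints differ. The heart of the argument is to show that this colouring, restricted to $Q$, realises the $d$ parallel classes of edges of the hypercube $Q$ by $d$ \emph{distinct} coordinates $S=\{j_1,\dots,j_d\}$, and that along each parallel class the corresponding coordinate takes only two values. Once this structural fact is in place, comparing $Q$ with the interval formula $I_H(x,y)=\{z\mid z_i\in\{x_i,y_i\}\}$ finishes the proof by a counting argument.

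First I would check that the $d$ edges of $Q$ incident to a fixed vertex $w\in Q$ carry pairwise distinct colours: if two of them changed the same coordinate $i$, their endpoints $u,u'\ne w$ would differ only in coordinate $i$, hence be adjacent in $H$ and therefore in $Q$ (as $Q$ is induced), contradicting that $K_2^{d}$ has girth $4$. Next comes the key local lemma: an \emph{induced} $4$-cycle $u_0u_1u_2u_3$ of $H$ is ``Cartesian'', i.e.\ its opposite edges change the same coordinate and even realise the same pair of values in that coordinate. This is a short case analysis: the two successive edges $u_0u_1$, $u_1u_2$ must change distinct coordinates $p\ne q$ (else $u_0,u_2$ coincide or are adjacent), and tracing the remaining two edges while keeping $u_3$ non-adjacent to $u_0$ and to $u_1$ forces the edge $u_2u_3$ to change $p$ with $u_{3,p}=u_{0,p}$ and the edge $u_3u_0$ to change $q$, so $\{u_{0,p},u_{1,p}\}=\{u_{3,p},u_{2,p}\}$. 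Since any two edges in a parallel class of $K_2^{d}$ are linked by a chain of $4$-cycles of $Q$, and $4$-cycles of $Q$ are induced in $Q$ and hence in $H$, each parallel class of $Q$ is monochromatic, say with colour $j_r$ and value pair $\{a_{j_r},b_{j_r}\}$, $a_{j_r}\ne b_{j_r}$. Together with the first observation this gives $|S|=d$.

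The conclusion is then immediate. Every $z\in Q$ is joined to $x$ by a path in $Q$, which only changes coordinates in $S$, so $z_i=x_i$ for $i\notin S$; and for $i=j_r\in S$ the corresponding perfect matching covers $z$, so $z$ lies on a colour-$j_r$ edge and $z_i\in\{a_{j_r},b_{j_r}\}$. Define $\bar x$ by $\bar x_i:=x_i$ for $i\notin S$ and, for $i\in S$, $\bar x_i:=$ the element of $\{a_i,b_i\}$ different from $x_i$ (well defined, since $x\in Q$ forces $x_i\in\{a_i,b_i\}$ and $a_i\ne b_i$). Then $Q\subseteq I_H(x,\bar x)$, and by the interval formula for Hamming graphs $I_H(x,\bar x)$ has exactly $2^{d(x,\bar x)}=2^{d}$ vertices; since $|V(Q)|=2^{d}$ we get $Q=I_H(x,\bar x)$. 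For uniqueness, if $Q=I_H(x,y)$ then the set of coordinates in which $x$ and $y$ differ is precisely the set of coordinates used by the edges of $Q$, namely $S$, and for each $i\in S$ the value $y_i$ is forced to be the unique value taken by coordinate $i$ on $Q$ other than $x_i$; hence $y=\bar x$.

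The main obstacle is the middle step — proving that an induced $4$-cycle of a Hamming graph is Cartesian in both colour and value pair. This is exactly where the hypothesis that $Q$ is \emph{induced} is indispensable: it is what excludes $4$-cycles running through three distinct values of a single coordinate, and it propagates, through the $4$-cycle chains inside $K_2^{d}$, to the global statement that every parallel class of $Q$ lives in a two-element sub-alphabet. Everything after that is bookkeeping and a cardinality count.
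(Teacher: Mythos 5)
Your proof is correct, and it is substantially more detailed than the paper's, which disposes of the lemma in one line by citing the well-known antipodality of hypercubes. In fact your argument supplies exactly the step that the paper's citation glosses over: antipodality of $Q\cong K_2^d$ only yields a unique vertex $\bar x$ at distance $d$ \emph{inside} $Q$, whereas the lemma asserts that $Q$ equals the interval $I_H(x,\bar x)$ of the ambient Hamming graph. To pass from one to the other one must know that an induced sub-hypercube of $H$ is a ``coordinate subcube'', i.e.\ that its $d$ parallel classes are realised by $d$ distinct coordinates, each restricted to a two-letter sub-alphabet --- and that is precisely your middle step (distinct colours at a vertex via triangle-freeness, the Cartesian structure of induced $4$-cycles, and propagation along $4$-cycle chains). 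Your local lemma on induced $4$-cycles is correct: with $p\ne q$ forced, the two common neighbours of $u_0$ and $u_2$ are exactly $u_1$ and the vertex with $v_p=u_{0,p}$, $v_q=u_{2,q}$, which pins down $u_3$ and the value pairs. The final counting step ($Q\subseteq I_H(x,\bar x)$ with both sets of size $2^d$) and the uniqueness argument are also sound. What the paper's approach buys is brevity at the cost of leaving the embedding fact implicit; what yours buys is a self-contained, fully verifiable argument from the coordinatization alone, at the cost of length. No gaps.
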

\begin{proof}
  This follows from a well known fact that hypercubes are antipodal graphs
  \cite{Mulder:80}.
  \
\end{proof}
It is well known that $I_H$ satisfies the monotone axiom (M) and thus
also (B2).

\begin{lemma}
  Let $Q'$ and $Q''$ be two induced sub-hypercubes in a Hamming graph
  $H$. Then $Q'\cap Q''$ is again an induced (possibly empty) sub-hypercube
  of $H$.
\end{lemma}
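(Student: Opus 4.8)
The plan is to work entirely in the coordinatized picture of the Hamming graph $H=\prod_{i=1}^n K_{a_i}$, using the explicit description of induced sub-hypercubes that follows from the preceding lemma. First I would establish the combinatorial normal form: an induced sub-hypercube $Q$ of $H$ is, by the previous lemma, of the form $Q=I_H(x,\bar x)$ for some antipodal pair, and by the formula for geodesic intervals in Hamming graphs this means there is a set of coordinates $D(Q)\subseteq\{1,\dots,n\}$ (the positions where $x$ and $\bar x$ differ) and fixed letters $c_i\in\mathcal{A}_i$ for $i\notin D(Q)$ such that
\begin{equation}
  Q=\bigl\{z\in H \bigm| z_i=c_i \text{ for all } i\notin D(Q),\ z_i\in\{x_i,\bar x_i\} \text{ for all } i\in D(Q)\bigr\}.
\end{equation}
In other words, on the ``free'' coordinates $D(Q)$ the vertex $x$ may take either of two prescribed values $\{x_i,\bar x_i\}$, and on the remaining coordinates it is pinned. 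Conversely any such set is clearly an induced sub-hypercube of dimension $|D(Q)|$. So the first step is just to make this dictionary precise.

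Next I would intersect two such descriptions. Let $Q'$ have free coordinate set $D'$ with allowed pairs $\{p_i,p_i'\}$ for $i\in D'$ and pinned values $c_i$ for $i\notin D'$, and similarly $Q''$ with $D''$, pairs $\{q_i,q_i'\}$, pinned values $d_i$. A vertex $z$ lies in $Q'\cap Q''$ iff its $i$-th coordinate simultaneously satisfies both constraints at every $i$. Coordinate by coordinate there are a few cases: if $i\notin D'\cup D''$ then $z_i$ must equal both $c_i$ and $d_i$, so the intersection is empty unless $c_i=d_i$; if $i\in D'\setminus D''$ then $z_i\in\{p_i,p_i'\}$ and $z_i=d_i$, forcing $z_i=d_i$ provided $d_i\in\{p_i,p_i'\}$ (else empty); symmetrically for $i\in D''\setminus D'$; and if $i\in D'\cap D''$ then $z_i$ must lie in $\{p_i,p_i'\}\cap\{q_i,q_i'\}$, a set of size $0$, $1$, or $2$. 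If any coordinate yields the empty constraint, $Q'\cap Q''=\emptyset$, which is the allowed degenerate case. Otherwise every coordinate is either pinned to a single letter or free to take exactly two prescribed letters, and the set of free coordinates is $E:=\{i\in D'\cap D'': |\{p_i,p_i'\}\cap\{q_i,q_i'\}|=2\}$. Since $z$ ranges independently over these choices, $Q'\cap Q''$ is exactly a set of the normal form above with free coordinate set $E$, hence an induced sub-hypercube of dimension $|E|$.

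The only subtlety — and the step I would be most careful about — is the word \emph{induced}: one must check that the subgraph of $H$ on the vertex set $Q'\cap Q''$ really is (isomorphic to) $K_2^{|E|}$ and not merely a graph on $2^{|E|}$ vertices. But this is immediate from the normal form together with the fact that $Q'$ itself is induced: two vertices of $Q'\cap Q''$ are adjacent in $H$ iff they differ in exactly one coordinate, and within the normal form that coordinate must be one of the free ones and the two letters must be the two allowed ones, which is precisely the hypercube adjacency on the factor $\prod_{i\in E} K_2$. So no ambient edges are lost or gained, and $Q'\cap Q''$ is induced. A brief remark that the empty set is admitted as a degenerate sub-hypercube (of dimension $-\infty$, or simply excluded by passing to the case analysis) completes the argument; I do not expect any real obstacle here, only bookkeeping of the coordinatewise cases.
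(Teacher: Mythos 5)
Your proposal is correct and follows essentially the same route as the paper: the paper's proof also observes that each induced sub-hypercube is a coordinatewise product with at most two letters per coordinate and that $Q'\cap Q''=\prod_i (Q_i'\cap Q_i'')$, which is exactly your normal form and coordinatewise intersection. You simply spell out the case analysis and the ``induced'' check that the paper leaves implicit.
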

\begin{proof}
  For every coordinate $i$, $Q_i'=\{x_i|x'\in Q\}$ and $Q_i''=\{x_i|x\in
  Q''\}$ contains at most two different letters from the alphabet
  $\mathcal{A}_i$. $Q'\cap Q''=\prod_i (Q_i'\cap Q_i'')$, and hence a
  hypercube.
  \
\end{proof}
As an immediate consequence we note that $I_H$ satisfies even the stronger
property
\begin{description}
\item[(MM)] For all $u,v,x,y\in X$ holds: if $R(u,v)\cap
  R(x,y)\ne\emptyset$ then there are $p,q\in X$ so that $R(u,v)\cap
  R(x,y)=R(p,q)$.
\end{description}

The disadvantage of the results so far is that we have to require
explicitly that $G_{R}$ is connected. In the light of condition (MG) above
it seems natural to require connectedness of $G_{R}$ for recombination
operators in general.

To-date, only sufficient conditions for connectedness of $G_{R}$ are
known. Following ideas outlined by \cite{vandeVel:83}, we can show
directly that the following property is sufficient:
\begin{description}
\item[(CG)] For all $a,x,y,z \in X$:
  If $R(a,x) \subseteq R(a,y)$, then
  $R(a,x) \subseteq R(a,z)\subseteq R(a,y)$ if and only if $z \in R(x,y)$.
\end{description}
As a technical device we will employ the partial order $\leq_{a}$ of $X$
defined, for given $a\in X$, by $x \leq_{a} y$ if and only if $R(a,x)
\subseteq R(a,y)$. As usual, we write $x <_{a}y$ if $x \leq_{a} y$ and
$x\neq y$. For $R=I_G$ we have the equivalence $x \in I_G(a,y)$ if and if only
$x\leq_{a} y$.

\begin{lemma}\label{thm:connected}
  The underlying graph $G_R$ of a transit function $R$ is connected if $R$
  satisfies axiom {\rm(CG)}.
\end{lemma}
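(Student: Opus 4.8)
The plan is to show that axiom (CG) forces the partial order $\leq_a$ to behave like a geodesic interval order in which one can always take a single ``step'' toward any target, and then to use this step to build a path in $G_R$ from any vertex to any other. Fix $a\in X$ and an arbitrary $y\in X$; it suffices to exhibit a path in $G_R$ from $a$ to $y$, since then $G_R$ is connected. First I would record the basic order-theoretic consequences of (CG): taking $x=a$ in (CG) gives, for all $z$, that $R(a,a)\subseteq R(a,z)\subseteq R(a,y)$ iff $z\in R(a,y)$, i.e.\ $z\leq_a y$ iff $z\in R(a,y)$ (using (T3) and (T1)); so $\leq_a$ is exactly ``membership in $R(a,\cdot)$'' restricted below $y$. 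In particular $a$ is the bottom element of $\leq_a$ and every $z\in R(a,y)$ satisfies $a\leq_a z\leq_a y$.

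The heart of the argument is to produce, for $a\neq y$, a vertex $w\in R(a,y)$ that is a \emph{cover} of $a$ in $\leq_a$, meaning $a<_a w$ with nothing strictly between, and then to show $a<_a w$ with nothing between actually forces $R(a,w)=\{a,w\}$, i.e.\ $aw\in E(G_R)$. For the covering step I would argue by minimality: among all $z$ with $a<_a z\leq_a y$ (this set is nonempty since $y$ itself qualifies, as $a\ne y$ and $y\in R(a,y)$ by (T1)) choose one, call it $w$, whose transit set $R(a,w)$ is minimal with respect to inclusion — possible because $X$ is finite (the paper works throughout with finitely many transit sets). If $R(a,w)\ne\{a,w\}$ there is some $z\in R(a,w)$ with $z\ne a,w$; then $a<_a z$ and, since $z\in R(a,w)\subseteq R(a,y)$ forces $z\leq_a y$, $z$ is another element of the set over which we minimized, but $R(a,z)\subsetneq R(a,w)$ (strict because $w\notin R(a,z)$: if $w\in R(a,z)$ then $z\leq_a w\leq_a z$ would give $R(a,z)=R(a,w)$ and combined with $z\in R(a,w)$, monotonicity-type reasoning via (CG) — with the roles $a,a,z,w$ — yields $z=w$, a contradiction). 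This contradicts minimality, so $R(a,w)=\{a,w\}$ and $aw$ is an edge of $G_R$.

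Now I would iterate. Having the edge $aw$ with $w\in R(a,y)$, I need $R(w,y)$ to be ``smaller'' so the induction terminates. Here (CG) is used again: apply it with the tuple $(a,w,y,\cdot)$ — since $R(a,w)\subseteq R(a,y)$, the equivalence says $z\in R(w,y)$ iff $R(a,w)\subseteq R(a,z)\subseteq R(a,y)$; in particular $a\notin R(w,y)$ unless $a=w$, and more importantly $R(w,y)\subseteq R(a,y)$ while $w\in R(a,y)\setminus R(w,y)$ (taking $z=w$ is allowed, giving $w\in R(w,y)$ by (T1), so one must instead observe that the strictly-below-$w$ part of $R(a,y)$ is disjoint from $R(w,y)$). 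The cleanest termination measure is $|R(a,y)|$: one shows $R(w,y)\subsetneq R(a,y)$ because $a\in R(a,y)$ but $a\notin R(w,y)$ (the latter since $a\in R(w,y)$ would force, via (CG), $R(a,w)\subseteq R(a,a)=\{a\}$, contradicting $a\neq w$). So by induction on $|R(a,y)|$ there is a $w$--$y$ path in $G_R$, and prepending the edge $aw$ gives an $a$--$y$ path. Since $a,y$ were arbitrary, $G_R$ is connected.

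The step I expect to be the main obstacle is the strictness claims that make the two inductions actually decrease — i.e.\ pinning down, purely from (CG) and (T1)--(T3), that $z\in R(a,w)$ with $z\neq w$ really does give $R(a,z)\subsetneq R(a,w)$ rather than merely $\subseteq$, and symmetrically that $a\notin R(w,y)$. Both reduce to the same lemma: if $R(a,u)\subseteq R(a,v)$ and $R(a,v)\subseteq R(a,u)$ then $u=v$, i.e.\ $\leq_a$ is antisymmetric as an order on $X$ (not just on equivalence classes). This antisymmetry is not immediate from the transit axioms alone and is the one place where (CG) must be invoked in its full ``if and only if'' strength; establishing it carefully is where the real work lies, after which the path-building is routine.
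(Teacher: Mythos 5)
Your overall strategy is sound and ultimately rests on the same mechanism as the paper's proof, namely that (CG) identifies $R(x,y)$ with the order interval $\{z : x\leq_a z\leq_a y\}$ whenever $x\leq_a y$, so that covering pairs of $\leq_a$ give edges of $G_R$. The packaging differs: the paper takes a single maximal $\leq_a$-chain from $a$ to $b$ and observes that any $x\in R(a_i,a_{i+1})\setminus\{a_i,a_{i+1}\}$ could be inserted between $a_i$ and $a_{i+1}$, contradicting maximality; you instead run an induction on $|R(a,y)|$, extracting one neighbour $w$ of $a$ inside $R(a,y)$ by an inclusion-minimality argument and recursing on the pair $(w,y)$. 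Your version is longer but makes the decreasing measure explicit, which the paper's ``maximal chain'' phrasing leaves implicit. Your termination step is already complete as written: the instantiation of (CG) with $x=w$ and $z=a$ correctly gives $a\in R(w,y)$ iff $R(a,w)\subseteq R(a,a)=\{a\}$, which is impossible for $w\neq a$, so $R(w,y)\subsetneq R(a,y)$ needs nothing further.

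The one genuine gap is exactly the step you flag: antisymmetry of $\leq_a$, i.e.\ that $R(a,u)=R(a,v)$ forces $u=v$. Without it, your minimality argument only yields $R(a,z)=R(a,w)$ for the putative interior point $z$ of $R(a,w)$, which is not yet a contradiction; and the instantiation you gesture at (``roles $a,a,z,w$'') does not close it, since with $x=a$ the axiom merely restates that $w\in R(a,z)$ iff $R(a,w)\subseteq R(a,z)$, which you already know. The instantiation that works is $x=y=u$: the hypothesis $R(a,u)\subseteq R(a,u)$ is vacuous, so (CG) gives $R(a,u)\subseteq R(a,z)\subseteq R(a,u)$ if and only if $z\in R(u,u)=\{u\}$ by (T3); hence $R(a,z)=R(a,u)$ implies $z=u$. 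With this one-line lemma inserted, the cover-finding step goes through ($z\in R(a,w)$, $z\neq a,w$ gives a strictly smaller candidate, contradicting minimality), $R(a,w)=\{a,w\}$ is an edge, and your induction completes the proof. It is worth noting that the paper's maximal-chain argument quietly relies on the same antisymmetry (to guarantee the inserted element genuinely enlarges the chain), so identifying and proving this lemma explicitly is a small improvement over the published argument rather than a detour.
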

\begin{proof}
  Let $R$ be a transit function satisfying axiom (CG). Let $a,b \in X$ be
  two distinct elements, and let $\mathbf{C}=(a=a_0,a_1,\dots a_t=b)$ be a
  maximal $\leq_a$-chain between $a$ and $b$, where the elements are
  labeled in increasing order $a=a_0 <_a a_1 <_a a_2 <_a \ldots <_a a_t=b$.

  We claim that, for any $i$, $0\leq i \leq n$, elements $a_i$ and
  $a_{i+1}$ form an edge in $G_R$. To see this assume that, on the
  contrary, there is an element $x \in R(a_i,a_{i+1}) \setminus
  \{a_i,a_{i+1}\}$ for some $i$.  Then (CG) implies $R(a,a_i) \subseteq
  R(a,x)\subseteq R(a,a_{i+1})$, i.e., $a_i <_a x <_a x_{i+1}$,
  contradicting maximality of the chain $\mathbf{C}$. Hence $\mathbf{C}$
  consists of consecutive edges whence $G_R$ is a connected graph.
  \
\end{proof}

However, property (CG) is much too strong for our purposes: Setting $x=a$
makes the condition in (CG) trivial, i.e., the axiom reduces to
``$R(a,z)\subseteq R(a,y)$ if and only if $z\in R(a,y)$''. Since
$R(a,z)\subseteq R(a,y)$ implies $z\in R(a,y)$ we are simply left with
axiom (B2), i.e., (CG) implies (B2). As we shall see below, however, string
crossover in general does not satisfy (B2) and thus (CG) cannot not hold in
general.  Similarly, we cannot use Lemma 1 of \cite{Changat:10}, which
states that $G_R$ is connected whenever $R$ is a transit function
satisfying (B1) and (B2).

Allowing conditions not only on $R$ but also on its closure $\widehat{R}$
we can make use of the fact that  $G_R=G_{\widehat{R}}$. Since
$\widehat{R}$ satisfies the monotonicity axion (M) by construction, (B2) is
also satisfied. Thus $R_G$ is connected if at least one of the following
two conditions is satisfied: (i) $\widehat{R}$ satisfies (B1), or
(ii)  $\widehat{R}$ satisfies
\begin{description}
\item[(CG')] $x\in \widehat{R}(a,z)$ and $z\in\widehat{R}(a,y)$ if and only
  if $z\in \widehat{R}(x,y)$
\end{description}
The latter is equivalent to (CG) whenever $R$ satisfies (M). To see this
observe that $R(a,x) \subseteq R(a,y)$ implies $x\in R(a,y)$ and by (M)
$R(x,y)\subseteq R(a,y)$.

So far, we lack a condition for the connectedness of $G_R$ that can be
expressed by first order logic in terms of $R$ alone.

\section{Basic Properties of $k$-Point Crossover}
\label{sect:basic}

We first show that the underlying graphs of $k$-point crossover transit
functions are Hamming graphs.
\begin{lemma}
\label{lem:GRk}
  $G_{R_k} = \prod_{i=1}^n K_{a_i}$ for all $1\le k\le n-1$.
\end{lemma}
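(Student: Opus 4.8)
The plan is to show two containments at the level of edge sets. Recall that an edge of $G_{R_k}$ is a pair $\{x,y\}$ with $R_k(x,y)=\{x,y\}$ and $x\ne y$, while an edge of the Hamming graph $\prod_i K_{a_i}$ is a pair of strings differing in exactly one coordinate. Since both graphs have vertex set $X=\prod_i\mathcal{A}_i$, it suffices to prove that $R_k(x,y)=\{x,y\}$ with $x\ne y$ holds \emph{if and only if} $x$ and $y$ differ in exactly one coordinate.

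First I would handle the easy direction. If $x$ and $y$ differ in exactly one coordinate $j$, then any $z$ obtained by a $k$-point crossover of $x$ and $y$ agrees with both $x$ and $y$ in every coordinate $i\ne j$ (since $x_i=y_i$ there), and in coordinate $j$ it takes either $x_j$ or $y_j$; hence $z\in\{x,y\}$, so $R_k(x,y)=\{x,y\}$ and $\{x,y\}$ is an edge of $G_{R_k}$. This already uses only the definition of crossover and holds for every $k\ge 1$.

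For the converse, suppose $x\ne y$ differ in at least two coordinates, say at positions $j<j'$ (among possibly others); I claim $R_k(x,y)\ne\{x,y\}$ whenever $k\ge 1$, in fact $1$-point crossover already suffices and $R_1\subseteq R_k$ by the ``at most $k$ breakpoints'' convention in Definition~\ref{def:xover}. Choose a breakpoint $i$ with $j\le i<j'$ and form $z=(x_1\dots x_i y_{i+1}\dots y_n)$. Then $z_j=x_j$ and $z_{j'}=y_{j'}\ne x_{j'}$, so $z\ne x$; likewise $z_j=x_j\ne y_j$ shows $z\ne y$. Hence $z\in R_1(x,y)\subseteq R_k(x,y)$ witnesses $|R_k(x,y)|\ge 3$, so $\{x,y\}$ is not an edge of $G_{R_k}$. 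Combining the two directions, the edge sets coincide, and since the vertex sets are literally equal, $G_{R_k}=\prod_{i=1}^n K_{a_i}$.

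I do not expect a genuine obstacle here; the statement is essentially a bookkeeping consequence of Definition~\ref{def:xover} together with the monotonicity $R_1(x,y)\subseteq R_k(x,y)$ in the number of breakpoints. The one point requiring a little care is the range of $k$: the claim is stated for $1\le k\le n-1$, and one should note that for $k\ge 1$ a single breakpoint is always available, while the upper bound $k\le n-1$ is irrelevant to the edge computation (it matters only in that $k=n-1$ already yields uniform crossover, so larger $k$ add nothing). A secondary subtlety is the degenerate case $x=y$, which is excluded from edges by definition and handled by (T3); and the case $n=1$, where $G_{R_k}=K_{a_1}$ and the range $1\le k\le n-1$ is empty, so there is nothing to prove. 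Apart from these boundary remarks the argument is complete.
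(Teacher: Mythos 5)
Your proof is correct and follows essentially the same route as the paper: reduce to $R_1$ via the containment $R_1(x,y)\subseteq R_k(x,y)$, observe that $R_k(x,y)=\{x,y\}$ when $d(x,y)=1$, and when $x,y$ differ in at least two positions place a single cut between two differing coordinates to produce an offspring distinct from both parents. Your version merely spells out the witness $z$ and the boundary cases more explicitly than the paper does.
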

\begin{proof}
  Since $R_j(x,y)\subseteq R_k(x,y)$ for $j\le k$ by definition, it
  suffices to consider $R_1$. By definition, $R_1(x,y)=\{x,y\}$ if and only
  if $x$ and $y$ differ in a single coordinate, i.e., for which $d(x,y)=1$,
  i.e., $x$ and $y$ are adjacent in $\prod_{i=1}^n K_{a_i}$.  Obviously,
  $R_k(x,y)=R_1(x,y)$ in this case.  If there are two or more sequence
  positions that are different between the parents, then the crossover
  operator can ``cut'' between them to produce and generate an off-spring
  different from either parent so that $|R_1(x,y)|>2$.
  \
\end{proof}

From Lemma \ref{lem:GRk} and Corollary \ref{cor:A1-A4} we immediately
conclude that the $k$-point crossover transit function $R_k$ satisfies
(A1), (A2'), (A3), and (A4).

\begin{lemma}
  \label{lem:hatR-I}
  Let $R_k$ be the $k$-point crossover function. Then $\widehat{R_k}(x,y) =
  I_{G_{R_k}}(x,y)$ for all $x,y\in X$ and all $k\ge 1$.
\end{lemma}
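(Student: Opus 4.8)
The plan is to show the two inclusions $\widehat{R_k}(x,y)\subseteq I_{G_{R_k}}(x,y)$ and $I_{G_{R_k}}(x,y)\subseteq \widehat{R_k}(x,y)$. Throughout we use the coordinatization of the Hamming graph $G_{R_k}=\prod_i K_{a_i}$ provided by Lemma~\ref{lem:GRk}, together with the explicit description of its geodesic intervals, namely $I_{G_{R_k}}(x,y)=\{z\mid z_i\in\{x_i,y_i\}\text{ for all }i\}$.

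For the inclusion $R_k(x,y)\subseteq I_{G_{R_k}}(x,y)$, observe directly from Definition~\ref{def:xover} that every $k$-point crossover offspring $z$ of $x$ and $y$ satisfies $z_i\in\{x_i,y_i\}$ for each coordinate $i$: each block $\{i_{\ell-1}+1,\dots,i_\ell\}$ contributes either the $x$-letters or the $y$-letters verbatim. Hence $R_k(x,y)\subseteq I_{G_{R_k}}(x,y)$. Since $I_{G_{R_k}}$ satisfies the monotone axiom (M) — this is the remark after the first unnumbered Lemma above, that $I_H$ satisfies (M) — and since $\widehat{R_k}$ is the smallest transit function containing $R_k$ that satisfies (M), we conclude $\widehat{R_k}(x,y)\subseteq I_{G_{R_k}}(x,y)$. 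Concretely: any $z\in\widehat{R_k}(x,y)$ arises from a finite chain of recombination pairs all sitting inside $I_{G_{R_k}}(x,y)$ by induction (using $R_k(u,v)\subseteq I_{G_{R_k}}(u,v)\subseteq I_{G_{R_k}}(x,y)$ whenever $u,v\in I_{G_{R_k}}(x,y)$, the last step being monotonicity), so $z\in I_{G_{R_k}}(x,y)$.

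The reverse inclusion $I_{G_{R_k}}(x,y)\subseteq \widehat{R_k}(x,y)$ is the substantive part. Fix $z$ with $z_i\in\{x_i,y_i\}$ for all $i$; I must exhibit $z$ as reachable from $x,y$ through iterated recombination, even when $k$ is small (say $k=1$) and $d(x,y)$ is large. The idea is that although a single application of $R_1$ can only ``cut once,'' repeated application of one-point crossover suffices to assemble an arbitrary mosaic: if $D=\{i: x_i\ne y_i\}$ is the set of differing coordinates and $S\subseteq D$ is the set of positions where we want $z$ to take the $y$-letter, then we can peel off one desired position at a time. Formally I would argue by induction on $|S|$ (or on the number of ``alternations'' in $z$ along the coordinate order): a single one-point crossover between $x$ and $y$ produces an offspring agreeing with $x$ on a prefix and $y$ on a suffix; a second one-point crossover between two such offspring can correct one more block; iterating, any subset $S$ is realized. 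Since $R_1(u,v)\subseteq R_k(u,v)$ for $k\ge1$, it suffices to do this with $R_1$. Thus every element of the interval lies in $\widehat{R_k}(x,y)$, in fact already in $\widehat{R_1}(x,y)$.

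The main obstacle is making the inductive assembly in the reverse inclusion precise: one must choose the intermediate recombination pairs so that each new offspring is genuinely a one-point (or $k$-point) crossover of the two currently available strings, and check that the chain terminates at the prescribed $z$. A clean way to organize this is to induct on the number of maximal runs of consecutive coordinates on which $z$ switches between following $x$ and following $y$; a string with $r$ such runs is a $1$-point crossover of two strings each having fewer runs, which closes the induction and also transparently shows $\widehat{R_1}=\widehat{R_k}$ for all $k\ge1$, both equal to $I_{G_{R_k}}$. Care is only needed at the coordinates where $x_i=y_i$, but there $z_i$ is forced and those positions never obstruct a cut.
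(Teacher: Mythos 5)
Your proof is correct and takes essentially the same route as the paper: the forward inclusion is the same coordinate-wise observation, and the reverse inclusion is the same idea of assembling an arbitrary $z\in I_{G_{R_k}}(x,y)$ by iterated one-point crossover and then invoking $R_1(u,v)\subseteq R_k(u,v)$. The only difference is bookkeeping — the paper greedily extends, one position per step, the prefix on which the current recombinant agrees with $z$, whereas you induct on the number of alternation blocks of $z$; the two organizations are interchangeable and equally valid.
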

\begin{proof}
  By construction $z\in \widehat{R_k}(x,y)$ agrees in each position with at
  least one of the parents, i.e., $z_i\in\{x_i,y_i\}$ for $1\le i\le n$,
  and thus $\widehat{R_k}(x,y) \subseteq I_{G_R}(x,y)$. Conversely, choose
  an arbitrary $z\in I_{G}(x,y)$. Find the first position $k$ in the
  coordinate representation in which $z$ disagrees with $x$ and form the
  recombinant $y'\in R_1(x,y)$ that agrees with $x$ for $i<k$ and with $y$
  for all $i\ge k$.  Then form the $x'\in R_1(x,y')\subseteq
  \widehat{R_1}(x,y)$ by recombining again after position $k$. By
  construction, $x'$ agrees with $z$ at least for all $i\le k$, i.e., in at
  least one position more than $x$. Since $x'\in \widehat{R_1}(x,y)$ we can
  repeat the argument at most $n$ time to find a sequence $x^{(n)}\in
  \widehat{R_1}(x,y)$ that agrees with $z$ in all positions. Since
  $R_1(x,y)\subseteq R_k(x,y)$ for all $k\ge 1$, we conclude that $z\in
  \widehat{R_k}(x,y)$.
  \
\end{proof}
As an immediate corollary we have:
\begin{corollary}
  $k$-point crossover is MP-geometric for all $k\ge 1$.
\end{corollary}
MP-geometricity is a desirable property for crossover operators in general
because it ensures that repeated application eventually produces the entire
geodesic interval of the underlying graph structure.

Lemma~\ref{lem:hatR-I} also implies a negative answer to one of the
questions posed by \cite{Mulder:08}: ``Is the geodesic convexity uniquely
determined by the geodesic interval function $I(u,v)$ of a connected
graph?''. More precisely, Lemma~\ref{lem:hatR-I} shows that the $k$-point
crossover transit function $R_k$ also generates the geodesic convexity and
hence that the geodesic convexity is not uniquely determined by the
interval function $I$ as the $I_{G_{R_k}}(x,y)$, being the interval in a
hypercube, is itself convex.

A trivial consequence of Lemma~\ref{lem:hatR-I}, furthermore, is the well
known fact that the transit function of uniform crossover $R_{n-1}$ is the
interval function on the Hamming graph:
\begin{corollary}
  $R_{n-1}(x,y)=\widehat{R_{n-1}}(x,y) = I_{G_R}(x,y)$ for all $x,y\in X$.
\end{corollary}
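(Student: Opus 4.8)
The plan is to show that when $k=n-1$ the crossover operator reaches the entire geodesic interval already in a single generation, so that no closure is needed. Concretely, I would argue that for $k=n-1$ the transit set $R_{n-1}(x,y)$ already equals $I_{G_R}(x,y)$, and then invoke Lemma~\ref{lem:hatR-I} to conclude that all three sets coincide.

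First I would recall from Lemma~\ref{lem:hatR-I} that $\widehat{R_{n-1}}(x,y) = I_{G_{R_{n-1}}}(x,y)$, and note that $G_{R_{n-1}} = G_R$ (writing $R$ for $R_{n-1}$ here, or more precisely using Lemma~\ref{lem:GRk} which gives $G_{R_k} = \prod_i K_{a_i}$ independent of $k$). Since always $R(x,y) \subseteq \widehat{R}(x,y)$, it suffices to prove the reverse inclusion $I_{G_R}(x,y) \subseteq R_{n-1}(x,y)$. So take any $z \in I_{G_R}(x,y)$; by the coordinatization of Hamming graphs, $z_i \in \{x_i, y_i\}$ for every $i$. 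Let $D = \{i : x_i \neq y_i\}$ be the set of differing positions, with $|D| = d(x,y) = d$. The positions outside $D$ are forced: $z_i = x_i = y_i$ there. On the $d$ positions of $D$, each $z_i$ is independently either $x_i$ or $y_i$.

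The key step is to realize $z$ as an $(n-1)$-point crossover offspring. Since $d \le n$, the set $D$ partitions $\{1,\dots,n\}$ into at most $d \le n$ maximal runs of consecutive indices that alternately lie inside and outside $D$ — and in any case the number of maximal constant blocks of the indicator function "does $z$ copy $x$ or $y$ at position $i$?" is at most $n$, hence requires at most $n-1$ breakpoints. More carefully: define a labeling $\lambda_i \in \{x,y\}$ for each $i$ by: if $i \notin D$ pick $\lambda_i$ arbitrarily (say $x$), and if $i \in D$ pick $\lambda_i$ to be whichever parent $z$ agrees with at position $i$. Then merging adjacent equal labels gives at most $n$ intervals $\{i_{\ell-1}+1,\dots,i_\ell\}$, and within each interval $z$ equals the chosen parent on all positions (trivially on non-$D$ positions, by choice on $D$ positions). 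This exhibits $z \in R_{n-1}(x,y)$ by Definition~\ref{def:xover}. The only mild subtlety — and the main thing to get right — is the off-by-one bookkeeping: $n$ intervals need only $n-1$ interior breakpoints $i_1 \le \dots \le i_{n-1}$, which is exactly the number allowed for $k = n-1$, so the count works out with no slack to spare. Combining $R_{n-1}(x,y) \subseteq \widehat{R_{n-1}}(x,y) \subseteq I_{G_R}(x,y) \subseteq R_{n-1}(x,y)$ forces equality throughout, proving the corollary.
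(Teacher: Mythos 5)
Your proof is correct and takes essentially the route the paper intends: the paper states the corollary as a trivial consequence of Lemma~\ref{lem:hatR-I}, the only missing ingredient being exactly the observation you spell out, namely that with $k=n-1$ breakpoints each coordinate can sit in its own interval, so $R_{n-1}(x,y)$ already exhausts $I_{G_R}(x,y)$ in a single step and no closure is needed. Your breakpoint bookkeeping follows the paper's intended convention (at most $k$ cuts, hence $k+1$ intervals), which is the reading consistent with Lemma~\ref{lem:fullinterval} and Theorem~\ref{thm:size}.
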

For small distances, $k$-point crossover also produces the full geodesic
interval in a single step:
\begin{lemma}\label{lem:fullinterval} 
  $R_k(x,y)=I_{G_R}(x,y)$ if and only if $d(x,y)\le k+1$.
\end{lemma}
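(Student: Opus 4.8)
The plan is to prove both implications separately, using the explicit coordinate description of $I_{G_R}(x,y)$ in a Hamming graph and Definition~\ref{def:xover}. Throughout, let $D=\{i: x_i\neq y_i\}$ be the set of positions where the parents differ, so $d(x,y)=|D|$, and recall that $I_{G_R}(x,y)=\{z : z_i\in\{x_i,y_i\}\text{ for all }i\}$; by Lemma~\ref{lem:hatR-I} this equals $\widehat{R_k}(x,y)$, and in general $R_k(x,y)\subseteq\widehat{R_k}(x,y)$. So the question is exactly when a single round of $k$-point crossover already reaches every $z$ with $z_i\in\{x_i,y_i\}$.

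For the ``if'' direction, suppose $d(x,y)\le k+1$ and pick an arbitrary $z\in I_{G_R}(x,y)$. Write $D=\{j_1<j_2<\dots<j_m\}$ with $m=|D|\le k+1$. Going along the coordinates $1,\dots,n$, the pattern of ``$z$ agrees with $x$'' versus ``$z$ agrees with $y$'' can change only at positions in $D$ (outside $D$ the two parents agree, so it does not matter which we credit $z$ to); hence $z$ can be obtained by cutting $x$ and $y$ into at most $m$ maximal runs, which is at most $k+1$ pieces, i.e.\ at most $k$ breakpoints. Formally, one chooses the $i_\ell$ in Definition~\ref{def:xover} to be the positions in $D$ at which the ``source parent'' of $z$ switches, padding with repeated indices if fewer than $k$ switches occur (this is exactly why the definition allows $i_{\ell-1}=i_\ell$). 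Thus $z\in R_k(x,y)$, and since $z$ was arbitrary, $R_k(x,y)=I_{G_R}(x,y)$.

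For the ``only if'' direction I would argue by contraposition: assume $d(x,y)=m\ge k+2$ and exhibit $z\in I_{G_R}(x,y)\setminus R_k(x,y)$. The natural candidate is a $z$ that alternates between the two parents on consecutive elements of $D$: set $z_{j_\ell}=x_{j_\ell}$ for $\ell$ odd and $z_{j_\ell}=y_{j_\ell}$ for $\ell$ even (and $z_i$ equal to the common value off $D$). Then in any representation of $z$ as a $k$-point crossover, between $j_\ell$ and $j_{\ell+1}$ the assigned parent must change, so one needs at least $m-1\ge k+1$ breakpoints — more than $k$ — a contradiction. Hence $z\notin R_k(x,y)$, so $R_k(x,y)\neq I_{G_R}(x,y)$.

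The only mildly delicate point — and the one I would write out carefully — is the bookkeeping in Definition~\ref{def:xover}: that definition fixes breakpoints at arbitrary indices $i_1\le\dots\le i_k$ and then assigns each whole block $\{i_{\ell-1}+1,\dots,i_\ell\}$ to a single parent, whereas the ``number of runs of $z$'' argument above naturally produces breakpoints only at elements of $D$ and only where the source actually switches. One must check that these two viewpoints agree, i.e.\ that a $z$ realizable with ``$t$ effective switches, all inside $D$'' is realizable in the sense of the definition with $k$ breakpoints precisely when $t\le k$, and conversely that any definitional $k$-point offspring has at most $k$ effective switches along $D$. Both are elementary: switches can be collapsed onto $D$ because the parents agree outside $D$, and extra breakpoints can always be inserted harmlessly (repeated indices, or a breakpoint between two positions assigned to the same parent). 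Once this equivalence is recorded, both implications are immediate from the counting of $|D|$ versus $k+1$. I expect this translation between the two descriptions of $R_k$ to be the main, though not deep, obstacle; everything else is direct.
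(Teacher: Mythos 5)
Your proof is correct and follows essentially the same route as the paper: for $d(x,y)\le k+1$ place the cuts in the (at most $k$) gaps between differing positions to realize any $z\in I_{G_R}(x,y)$, and for $d(x,y)>k+1$ exhibit the alternating recombinant, which needs more than $k$ cuts. You simply spell out the run-counting bookkeeping that the paper leaves implicit.
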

\begin{proof}
  If $d(x,y)\le k+1$ we can place one cross-over cut between any two
  position at which $x$ and $y$ differ. In this way we obtain all possible
  recombinations, i.e., $R_k(x,y)$ is a subhypercube of dimension $k+1$ in
  the underlying graph $G_R$. Conversely if  $d(x,y) > k+1$ then there is
  $z \in I_{G_R}(x,y)$ such that $z$ requires more than $k$ cross-over
  cuts between positions at which $x$ and $y$, which completes the proof.
  \
\end{proof}

Next, we observe that the transit sets of $k$-point crossover can be
constructed recursively.
\begin{theorem}\label{thm:recurse}
$\displaystyle
R_k(u,v)= \bigcup_{z \in R_{k-1}(u,v)} \left[R_1(u,z) \cup R_1(z,v)\right]$.
\end{theorem}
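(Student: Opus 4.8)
The plan is to prove the two inclusions separately, using the coordinate (string) description of $k$-point crossover throughout rather than any abstract transit-function axioms.

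For the inclusion ``$\supseteq$'', fix $z\in R_{k-1}(u,v)$ and $w\in R_1(u,z)$; I must show $w\in R_k(u,v)$ (the case $w\in R_1(z,v)$ is symmetric, or follows by swapping the roles of $u$ and $v$). Since $z\in R_{k-1}(u,v)$, there are breakpoints $0=i_0\le i_1\le\cdots\le i_{k-1}=n$ partitioning $\{1,\dots,n\}$ into at most $k-1$ blocks, on each of which $z$ copies $u$ or $v$. Since $w\in R_1(u,z)$, there is a single breakpoint $j$ with $w$ agreeing with $u$ on one side and with $z$ on the other. Inserting $j$ into the list $\{i_0,\dots,i_{k-1}\}$ (if it is not already there) yields at most $k$ blocks; on the side where $w$ copies $u$, every refined block copies $u$, and on the side where $w$ copies $z$, each refined block still copies whichever of $u,v$ the original block of $z$ copied. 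Hence $w$ is obtained from $u$ and $v$ with at most $k$ breakpoints, i.e. $w\in R_k(u,v)$.

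For the inclusion ``$\subseteq$'', take $w\in R_k(u,v)$ with breakpoints $0=i_0\le i_1\le\cdots\le i_k=n$, so on block $B_\ell=\{i_{\ell-1}+1,\dots,i_\ell\}$ the string $w$ copies $u$ or $v$. The idea is to ``merge'' the last breakpoint: define $z$ to agree with $w$ on $B_1\cup\cdots\cup B_{k-1}$ and to agree with $v$ on $B_k$. Then $z$ is built from $u$ and $v$ using the breakpoints $i_0,\dots,i_{k-1}$ — that is at most $k-1$ blocks — so $z\in R_{k-1}(u,v)$. It remains to recover $w$ from $z$ with a single extra cut. If $w$ copies $v$ on $B_k$ then $w=z$ and we are done. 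If $w$ copies $u$ on $B_k$, then $w$ and $z$ agree on $B_1\cup\cdots\cup B_{k-1}$ by construction, but on $B_k$ we have $w=u|_{B_k}$ whereas $z=v|_{B_k}$; this is not yet of the form $R_1(u,z)$ or $R_1(z,v)$ because $w$ need not equal $u$ outside $B_k$. The fix is to choose the merge direction more carefully: let $c\in\{u,v\}$ be whichever parent $w$ copies on the final block $B_k$, and define $z$ to agree with $w$ on $B_1\cup\cdots\cup B_{k-1}$ and with $c$ on $B_k$. Again $z\in R_{k-1}(u,v)$ (it uses breakpoints $i_0,\dots,i_{k-1}$). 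Now $w$ is obtained from $z$ by a single breakpoint at $i_{k-1}$: to the left of $i_{k-1}$, both $w$ and $z$ equal $w$; to the right, $w$ equals $c$ and $z$ equals $c$ — so in fact $w=z$. This shows every $w\in R_k(u,v)$ already lies in $R_{k-1}(u,v)\subseteq\bigcup_{z}[R_1(u,z)\cup R_1(z,v)]$ via the term $R_1(u,z)=\{u,z\}\ni z=w$ (taking $z=w$, which lies in $R_{k-1}(u,v)$ when $w$ itself is a $(k-1)$-point offspring) — but of course not every $w\in R_k(u,v)$ is a $(k-1)$-point offspring, so the merge must genuinely split off one cut. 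I therefore keep $z$ as the $(k-1)$-breakpoint truncation of $w$ that drops the cut at $i_{k-1}$ by extending block $B_{k-1}$ through block $B_k$ using the parent copied on $B_{k-1}$; then $w\in R_1(z,v)$ or $w\in R_1(z,u)$ according to the parent on $B_k$, with the single cut placed at $i_{k-1}$.

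The main obstacle is precisely this bookkeeping in the ``$\subseteq$'' direction: one must choose which of the two adjacent final blocks to absorb into the other so that the residual difference between $z$ and $w$ is confined to a single contiguous suffix and matches a single parent, so that $w$ genuinely lies in $R_1(z,v)$ or $R_1(z,u)$. Once the right truncation is identified — drop the cut $i_{k-1}$, letting the parent used on $B_{k-1}$ bleed into $B_k$, and record the discarded piece as the one-point recombination — both inclusions are routine, and the degenerate cases (equal breakpoints, $w\in\{u,v\}$, $z\in\{u,v\}$) are handled by the convention in Definition~\ref{def:xover} that fewer than the maximal number of breakpoints is allowed. I would also remark that combining this recursion with Lemma~\ref{lem:fullinterval} gives an alternative inductive proof of Lemma~\ref{lem:hatR-I}.
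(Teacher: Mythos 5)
Your final construction for the ``$\subseteq$'' direction is exactly the paper's argument: absorb the last block $B_k$ into $B_{k-1}$ by letting the parent used on $B_{k-1}$ extend to the end, obtaining $z\in R_{k-1}(u,v)$, and recover $w$ as a one-point recombination of $z$ with the parent copied on $B_k$, cut at $i_{k-1}$; you additionally verify the ``$\supseteq$'' inclusion (refine the $k-1$ breakpoints of $z$ by the single cut of $w$), which the paper leaves implicit. The proof is correct, though the exposition would benefit from deleting the two false starts in the middle (the first choice of $c$ makes $z=w$, which is circular, as you yourself note) and stating only the final truncation directly.
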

\begin{proof}
  W.l.o.g we can assume that $u= 0 \ldots 0$ and $v= 1 \ldots 1$.  Let $a \in
  R_k(u,v)$ and without loss of generality we can assume that $a$ ends with
  0. Let $a_j$ denote the coordinate, with the last appearance of 1 in
  $a$. Let $b$ be an element with $b_i=a_i$ for $1 \leq i \leq j$ and
  $b_i=1$ otherwise. It follows that $b \in R_{k-1}(u,v)$ and moreover $a
  \in R_1(b,v)$.
  \
\end{proof}

A key property in the theory of transit functions is the so-called Pasch
axiom
\begin{description}
\item[(Pa)] For $p, a, b\in X$, $a'\in R(p,a)$ and $b'\in R(p,b)$ implies
  that $R(a',b)\cap R(b',a) \neq \emptyset$.
\end{description}

\begin{figure}
\label{fig:Pasch}
  \begin{center}
    \includegraphics[width=\textwidth]{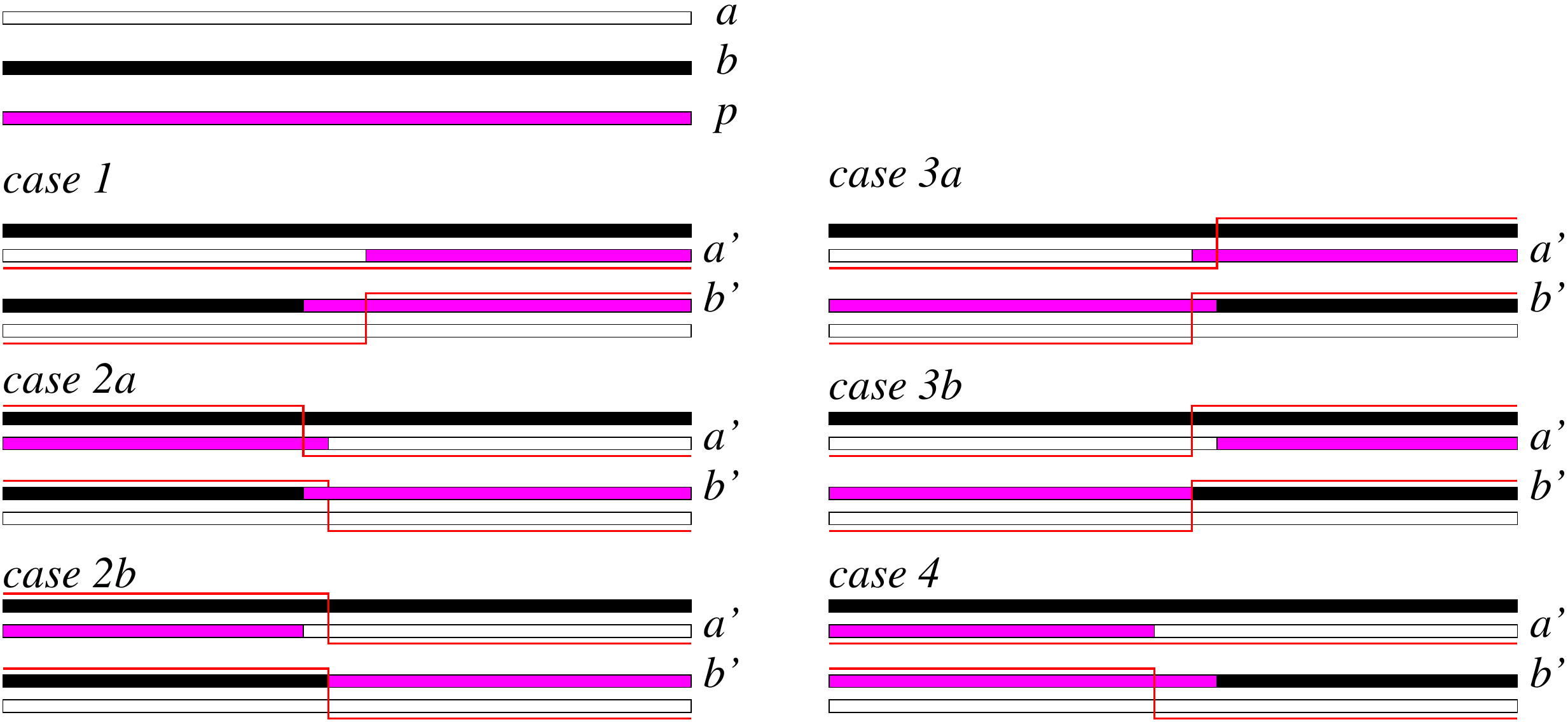}
  \end{center}
  \caption{Sketch of the proof of Lemma~\ref{lem:Pasch}. We distinguish 6
    cases depending on how $a$' and $b'$ are constructed in $R_1(a,p)$ and
    $R_1(b,p)$, respectively. The red lines indicate the explicit
    construction of an element in $R_1(a,b')\cap R_1(a',b)$.}
\end{figure}

\begin{lemma}
\label{lem:Pasch}
$R_1$ satisfies the Pasch axiom {\rm(Pa)}.
\end{lemma}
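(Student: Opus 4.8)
The plan is to reduce everything to an explicit coordinate computation. Since $R_1$ only involves $1$-point crossover, every element of $R_1(x,y)$ has a very rigid structure: it agrees with $x$ on a prefix of the coordinates (with respect to the positions where $x$ and $y$ differ) and with $y$ on the complementary suffix, or vice versa. So I would first fix $p$, $a$, $b$ and set up a common coordinatization. Because $R_1$ is invariant under coordinate permutations and under swapping letters within an alphabet $\mathcal{A}_i$, it costs nothing to relabel so that the analysis becomes bookkeeping on intervals of indices $\{1,\dots,n\}$. Write $D_{pa}$, $D_{pb}$, $D_{ab}$ for the sets of positions where the respective pairs disagree; note $D_{ab}\subseteq D_{pa}\cup D_{pb}$ and on each position the three strings take at most... well, on a position in $D_{pa}\cap D_{pb}$ all of $p_i,a_i,b_i$ may be distinct, which is exactly why we must be careful with non-binary alphabets. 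I would record at the outset that a position $i$ falls into one of the types: $p_i=a_i=b_i$; $p_i=a_i\ne b_i$; $p_i=b_i\ne a_i$; $a_i=b_i\ne p_i$; or $p_i,a_i,b_i$ pairwise distinct.

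Next I would parametrize $a'\in R_1(p,a)$ and $b'\in R_1(p,b)$. Each is determined by a single breakpoint together with a binary choice of which side copies $p$; concretely $a'$ equals $p$ on one contiguous block of the difference positions $D_{pa}$ (in their natural order) and equals $a$ on the rest, and similarly for $b'$ relative to $D_{pb}$. This gives the "6 cases" alluded to in the figure caption: up to the left--right symmetry, there are essentially two shapes for $a'$ (a prefix from $p$ or a suffix from $p$) crossed with two for $b'$, plus the degenerate cases $a'\in\{p,a\}$, $b'\in\{p,b\}$ which are handled by axioms (T1)/(T3) directly — e.g. if $a'=p$ then $p\in R_1(b',p)\cap R_1(a',b)$ after checking membership, and if $a'=a$ then $a\in R_1(a',b)$ and we only need to find it inside $R_1(a,b')$, which follows because $b'$ lies between $p$ and $b$. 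For each genuine case I would exhibit one explicit recombinant $c$ and verify $c\in R_1(a',b)$ and $c\in R_1(a,b')$ by checking it agrees, on each block of difference positions, with exactly one endpoint and does so with a single breakpoint on each side — this is the "red line" construction in the figure. The natural candidate for $c$ is obtained by taking the breakpoint of $a'$ and the breakpoint of $b'$ and using whichever of the two is "inner", copying $p$ on the overlap and the appropriate parent outside; one checks this is simultaneously a one-point recombinant of $(a',b)$ and of $(a,b')$.

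The main obstacle I expect is the combinatorial case analysis together with the non-binary subtlety: when a coordinate has $p_i,a_i,b_i$ all distinct, the "obvious" candidate $c$ might accidentally fail to equal any single endpoint on that coordinate, so the choice of $c$ must be made so that on every position it copies one of the two relevant strings, never synthesizes a new letter. I would therefore organize the proof by first arguing that it suffices to treat the case where $D_{pa}$, $D_{pb}$ are each a contiguous block and their union is $\{1,\dots,n\}$ (absorbing the all-agree positions and reordering so the $p$-from-$a$ and $p$-from-$b$ blocks are arranged with the $D_{pa}\cap D_{pb}$ positions adjacent), and then dispatch the handful of remaining sub-configurations of how the two breakpoints interleave. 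Each sub-case is a one-line verification once $c$ is named, so the real content is (i) the normalization reduction and (ii) writing down $c$ correctly in each of the six arrangements; everything after that is routine.
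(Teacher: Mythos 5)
Your core construction is the paper's: classify $a'\in R_1(p,a)$ and $b'\in R_1(p,b)$ according to whether $p$ contributes a prefix or a suffix to each, handle the ``aligned'' cases by showing one of $a',b'$ already lies in the required intersection, and in the ``crossed'' case exhibit the explicit common recombinant assembled from a $b$-segment, the overlapping $p$-segment, and an $a$-segment (respectively, recombine both pairs at one common position when the two $p$-segments do not overlap). This is exactly the argument in the paper, down to the candidate $c=b_1\dots b_l\,p_{l+1}\dots p_k\,a_{k+1}\dots a_n$; and your observation that $c$ is always a literal concatenation of segments of the relevant strings already disposes of the non-binary worry about positions where $p_i,a_i,b_i$ are pairwise distinct.

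However, the normalization you place at the front is invalid: $R_1$ is \emph{not} equivariant under coordinate permutations, because one-point crossover depends essentially on the linear order of positions. Concretely, $R_1(000,111)=\{000,100,110,111,011,001\}$ is the $6$-cycle missing $010$ and $101$; the transposition of the first two coordinates fixes both parents yet maps $100$ to $010\notin R_1(000,111)$. So you cannot ``reorder so that $D_{pa}$ and $D_{pb}$ are contiguous blocks'' without changing the transit function, and a case analysis carried out only for that special configuration would not prove the lemma. (Relabelling letters within each $\mathcal{A}_i$ is harmless; only the permutation of positions fails.) The repair is simply to delete the reduction: your breakpoint-based candidate is defined directly from the two actual breakpoints $k$ and $l$ in $\{0,\dots,n\}$ and needs no assumption on the shape of the difference sets, which is precisely how the paper proceeds. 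One further small slip: in the degenerate case $a'=p$ the witness should be $b'$ itself, since $b'\in R_1(p,b)=R_1(a',b)$ and $b'\in R_1(b',a)$ by (T1); the set $R_1(b',p)$ you invoke is not one of the two sets whose intersection (Pa) requires to be nonempty.
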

\begin{proof}
  Consider three arbitrary strings $a$, $b$, and $p$. Then $a'\in R_1(a,p)$
  is a concatenation of a prefix of $a$ with the corresponding suffix of
  $p$, or \emph{vice versa}. Each $b'\in R_1(b,p)$ has an analogous
  representation, leading to four cases depending on whether $p$ is a prefix
  or a suffix of $a'$ and $b'$, resp., see Fig.~\ref{fig:Pasch}. In case 1,
  $a'\in R_1(b',a)$ if $a'$ has a shorter $p$-suffix than $b'$. Otherwise
  $b'\in R_1(a',b)$.  In case 2, $a'$ has a $p$-prefix up to $k$ and $b'$
  has a $p$-suffix starting at $l$. If the two parts of $p$ overlap, i.e.,
  $l\le k$ then $(b_1\dots b_l,p_{l+1}\dots p_{k},a_{k+1}\dots a_n)\in
  R_1(b,a')\cap R_1(a,b')$. If $k<l$ then a common crossover product is
  obtained by recombining both $b$ with $a'$ and $a$ with $b'$ at position
  $k$. Case 3, $a'$ has a $p$-suffix and $b'$ has an $p$-prefix, can be
  treated analogously. Case 4, in which $p$ matches a prefix of both $a'$
  and $b'$ can be treated as in case 1. In summary, thus $R_1(a',b)\cap
  R_1(a,b')\ne \emptyset$ for any choice of $a'\in R_1(a,p)$ and $b'\in
  R_1(b,p)$, i.e., $R_1$ satisfies (Pa).
  \
\end{proof}

\begin{theorem}
  $R_k$ satisfies axiom {\rm(Pa)} for all $k\ge1$.
\end{theorem}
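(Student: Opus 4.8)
The strategy is to lift the Pasch property of $R_1$ (Lemma~\ref{lem:Pasch}) to $R_k$ by induction on $k$, using the recursive description of $R_k$ from Theorem~\ref{thm:recurse}. The base case $k=1$ is exactly Lemma~\ref{lem:Pasch}. For the inductive step, suppose $R_{k-1}$ satisfies (Pa) and let $p,a,b\in X$ with $a'\in R_k(p,a)$ and $b'\in R_k(p,b)$; we must produce a common element of $R_k(a',b)\cap R_k(b',a)$.

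\textbf{Key steps.} First I would rewrite the hypotheses using Theorem~\ref{thm:recurse}: since $a'\in R_k(p,a)$, there is $z\in R_{k-1}(p,a)$ with $a'\in R_1(p,z)\cup R_1(z,a)$, and likewise there is $w\in R_{k-1}(p,b)$ with $b'\in R_1(p,w)\cup R_1(w,b)$. The plan is then to split $a'$ and $b'$ into "one $R_1$-step away from an $R_{k-1}$-point of $(p,a)$, resp.\ $(p,b)$" and to interleave two applications of Pasch: one at the "outer'' level using the inductive hypothesis on $R_{k-1}$ to control the intermediate points $z,w$, and one at the "inner'' level using Lemma~\ref{lem:Pasch} for the single $R_1$-steps producing $a'$ from $z$ and $b'$ from $w$. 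Concretely, applying (Pa) for $R_{k-1}$ to the triple $p,a,b$ with witnesses $z\in R_{k-1}(p,a)$ and $w\in R_{k-1}(p,b)$ yields a point $c\in R_{k-1}(z,b)\cap R_{k-1}(w,a)$; one then has to chase $c$ together with the $R_1$-steps $z\to a'$ and $w\to b'$ through Lemma~\ref{lem:Pasch} (and the monotonicity of $\widehat{R_k}=I_{G_{R_k}}$ from Lemma~\ref{lem:hatR-I}, together with Lemma~\ref{lem:fullinterval} which says short-distance transit sets are full intervals) to land an offspring inside both $R_k(a',b)$ and $R_k(b',a)$. Throughout I would normalize as in Theorem~\ref{thm:recurse}, taking the relevant pair to be $0\cdots0$ and $1\cdots1$ and tracking the breakpoint multiset of each string, so that "$x\in R_k(u,v)$'' becomes the purely combinatorial statement that $x$ has at most $k$ blocks relative to the $u,v$-disagreement positions.

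\textbf{Main obstacle.} The delicate point is bounding the number of crossover breakpoints in the constructed common offspring: the naive interleaving of an $R_{k-1}$-element with an $R_1$-step can a priori create up to $k+1$ blocks, so the construction must be chosen carefully so that a breakpoint introduced by the inner $R_1$-recombination coincides with, or is absorbed into, an existing block boundary of the $R_{k-1}$-element — exactly the kind of overlap bookkeeping that appears already in cases~2 and~3 of the proof of Lemma~\ref{lem:Pasch}. I expect the cleanest route is to do the breakpoint count directly in coordinates: write $a'$ and $b'$ explicitly as prefix/suffix concatenations witnessed by $z$ and $w$, use the $R_{k-1}$-Pasch witness $c$ to align the "bulk'' of the two sides, and then verify by inspection of the (finitely many) prefix/suffix case combinations — as in Figure~\ref{fig:Pasch} — that the explicit common recombinant never exceeds $k$ breakpoints. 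Once that count goes through, membership of the constructed string in $R_k(a',b)\cap R_k(b',a)$ is immediate from Definition~\ref{def:xover}.
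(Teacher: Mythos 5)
Your proposal is a plan rather than a proof: the entire content of the theorem is concentrated in the step you defer (``one then has to chase $c$ \dots'', ``verify by inspection \dots that the explicit common recombinant never exceeds $k$ breakpoints''), and that step is exactly where the difficulty lies. Worse, the route you sketch faces a concrete obstruction. Your inductive hypothesis, applied to $p,a,b$ with witnesses $z\in R_{k-1}(p,a)$ and $w\in R_{k-1}(p,b)$, produces $c\in R_{k-1}(z,b)\cap R_{k-1}(w,a)$; but there is no general containment relating $R_{k-1}(z,b)$ to $R_k(a',b)$ when $a'$ is one $R_1$-step from $z$ --- in the branch $a'\in R_1(p,z)$ the point $a'$ moves from $z$ \emph{toward} $p$, i.e.\ away from $a$, and $c$ need not lie in any small-breakpoint recombination set of $a'$ and $b$ at all. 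Appealing to Lemma~\ref{lem:hatR-I} and Lemma~\ref{lem:fullinterval} does not help here, because those control only the closure $\widehat{R_k}=I_{G_{R_k}}$, whereas (Pa) is a statement about the single-generation sets $R_k$. So the ``double Pasch'' interleaving is not just unfinished; as described it has no mechanism for producing the required element.

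The paper's argument is structured quite differently and is worth comparing against. It does not induct on $k$ and does not decompose the hypotheses $a'\in R_k(a,p)$, $b'\in R_k(b,p)$ at all. Instead it expands the \emph{target} sets $R_k(a',b)$ and $R_k(a,b')$ via Theorem~\ref{thm:recurse}, keeps only the terms corresponding to $z=b$ and $y=b'$ (equivalently, it just uses $R_1\subseteq R_k$), and thereby reduces the whole theorem to the single claim $R_1(a',b)\cap R_1(a,b')\ne\emptyset$, which it attributes to the prefix/suffix case analysis of Lemma~\ref{lem:Pasch}. If you want to salvage your write-up, the efficient move is the same reduction: show that the $R_1$-level intersection is already nonempty for parents $a',b'$ drawn from the $k$-point sets (which requires extending the case analysis of Lemma~\ref{lem:Pasch} to parents that are $k$-block rather than $2$-block mosaics of $a,p$ and $b,p$), rather than trying to propagate a Pasch witness through the recursion of Theorem~\ref{thm:recurse}.
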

\begin{proof}
For fixed $a,b,p$ let $a'\in R_k(a,p)$ and $b'\in R_k(b,p)$.
By Theorem~\ref{thm:recurse} we have
\begin{equation*}
 R_k(a',b) = \bigcup_{z\in R_{k-1}(a',b)}[ R_1(a',z)\cup R_1(z,b) ] \qquad
   R_k(a,b') = \bigcup_{y\in R_{k-1}(a,b')}[ R_1(b',y)\cup R_1(y,a) ]
\end{equation*}
and hence
\begin{equation*}
\begin{split}
&R_k(a',b)\cap R_k(a,b') = \\
&\left( \bigcup_{z\in R_{k-1}(a',b)}[ R_1(a',z)\cup R_1(z,b) ] \right) \cap
\left( \bigcup_{y\in R_{k-1}(a,b')}[ R_1(b',y)\cup R_1(y,a) ] \right)
 = \\
&
\left(
\left( \bigcup_{z\in R_{k-1}(a',b)}[ R_1(a',z)\cup R_1(z,b) ] \right) \cap
\left( \bigcup_{y\in R_{k-1}(a,b')} R_1(b',y) \right)
\right)
   \cup\\
&\left(
\left( \bigcup_{z\in R_{k-1}(a',b)}[ R_1(a',z)\cup R_1(z,b) ] \right) \cap
\left( \bigcup_{y\in R_{k-1}(a,b')} R_1(y,a)\right)
\right)\\
&
\supseteq
\bigcup_{{z\in R_{k-1}(a',b) \atop y\in R_k-1(b',a)}} [ R_1(a',z)\cap R_1(y,a)]
\end{split}
\end{equation*}
Since $z=b\in R_{k-1}(a',b)$ and $y=b'\in R_{k-1}(a,b')$ we conclude
$R_k(a',b)\cap R_k(a',b) \supseteq R_1(a',b)\cap R_1(a,b') \ne \emptyset$
by Lemma~\ref{lem:Pasch}.  
\
\end{proof}

The Pasch axiom (Pa) implies in particular (B3), as shown by
\cite{vandeVel:93}. Lemma 1 of \cite{MulderNebesky:09} therefore implies
that $R_k$ also satisfies
\begin{description}
  \item[(C4)] $z\in R(x,y)$ implies $R(x,z)\cap R(z,y)=\{z\}$,
\end{description}
which in turn implies (B1).

Furthermore, $\widehat{R}$ also satisfies (M) and therefore in particular
(B2). As an immediate consequence we conclude that $\widehat{R_k}$ is
geometric in the sense of Nebesk{\'y}. Note that this is not true for $R_k$
itself since (B2) is violated for all $k<n-1$ for all pairs of vertices
with distance $d(x,\bar x)=n$. Lemma 1 of \cite{Changat:10}, furthermore,
implies that $G_{\widehat{R_k}}=G_{R_k}$ is connected since $\widehat{R_k}$
is a transit function satisfying (B1) and (B2).

The requirement that $G$ is connected in Corollary~\ref{cor:hypercube} and
\ref{cor:A1-A4} can therefore be replaced also by requiring that
$\widehat{R}$ satisfies (Pa).

The main result of \cite{Nebesky:94}, see also \cite{MulderNebesky:09},
states that a geometric transit function $R$ equals the interval function
of its underlying graph, $R=I_{G_R}$, if and only if $R$ satisfies in
addition the two axioms
\begin{description}
  \item[(S1)]
    $|R(x,y)|=|R(z,w)|=2$,
    $x \in R(y,w)$, and
    $y,w\in R(x,z)$,
    implies $z \in R(y,w)$.
  \item[(S2)]
    $|R(x,y)|=|R(y,w)|=2$, $y
    \in R(x,y)$, $ w \notin R(x,z)$, $z \notin R(y,w)$ implies $y \in R(x,w)$.
\end{description}
Again we need (S1) and (S2) to hold for $\widehat{R}$ rather than $R$
itself.

\begin{lemma}\label{s1}
The 1-point crossover operator $R_{1}$ satisfies the {\rm(S1)} axiom.
\end{lemma}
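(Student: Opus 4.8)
I would argue entirely in the coordinate description of strings. Two ingredients are elementary: by the proof of Lemma~\ref{lem:GRk}, $|R_1(a,b)|=2$ holds exactly when $a$ and $b$ differ in a single position, and by the definition of $1$-point crossover every element of $R_1(a,b)$ is a prefix of one of $a,b$ concatenated with the complementary suffix of the other, so it agrees coordinatewise with $a$ or with $b$. Write $D(a,b):=\{i\mid a_i\neq b_i\}$. From the hypotheses of (S1) I fix $p$ with $D(x,y)=\{p\}$ and $q$ with $D(z,w)=\{q\}$. Since $y\in R_1(x,z)$, $y_i\in\{x_i,z_i\}$ for every $i$; together with $y_i=x_i$ for $i\neq p$ and $y_p\neq x_p$ this forces $y_p=z_p$, so $p\in D(x,z)$ and $y$ agrees with $x$ off position $p$. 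Symmetrically, $w\in R_1(x,z)$ with $D(z,w)=\{q\}$ gives $w_q=x_q$, so $q\in D(x,z)$ and $w$ agrees with $z$ off position $q$.

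The first substantial step is to show $p=q$. If $p\neq q$, then at position $p$ we have $w_p=z_p=y_p\neq x_p$, so $x$ agrees with neither $y$ nor $w$ in coordinate $p$, contradicting $x\in R_1(y,w)$ (whose elements must agree with $y$ or $w$ in each coordinate). Hence $p=q$. Setting $S:=D(x,z)\ni p$, a direct inspection now yields $D(x,y)=\{p\}$, $D(x,w)=S\setminus\{p\}$ (because $w$ agrees with $z$ off $p$ and $w_p=x_p$), and $D(y,w)=S$.

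Next I use the remaining hypothesis $x\in R_1(y,w)$: there is a cut $j\in\{0,1,\dots,n\}$ with $x=(y_1\dots y_j\,w_{j+1}\dots w_n)$ or $x=(w_1\dots w_j\,y_{j+1}\dots y_n)$. In the first case, matching against $D(x,y)=\{p\}$ and $D(x,w)=S\setminus\{p\}$ forces $j<p$ and every index of $S\setminus\{p\}$ to be $\le j$, hence $p=\max S$; the second case forces, symmetrically, $p=\min S$. To conclude, if $p=\max S$ then $(w_1\dots w_{p-1}\,y_p\,y_{p+1}\dots y_n)$ equals $z$ --- it matches $z$ below $p$ since $w_i=z_i$ there, at $p$ since $y_p=z_p$, and above $p$ since there $y_i=x_i=z_i$ (as $i\notin S$) --- so $z\in R_1(w,y)=R_1(y,w)$. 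If $p=\min S$ then $(y_1\dots y_p\,w_{p+1}\dots w_n)$ equals $z$ by the same reasoning, again giving $z\in R_1(y,w)$.

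The main obstacle is the middle step: recognizing that $x\in R_1(y,w)$ is possible only when the single position $p$ at which $x$ and $y$ differ is either the first or the last index of $S=D(x,z)$, and then exhibiting the correct cut that realizes $z$ as a recombinant of $y$ and $w$. Everything else reduces to coordinatewise checking, and the degenerate subcases ($S=\{p\}$, i.e.\ $w=x$ and $y=z$, or $p\in\{1,n\}$) are immediate.
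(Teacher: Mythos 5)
Your proof is correct and takes essentially the same route as the paper's: a direct coordinatewise verification from the prefix/suffix description of $1$-point recombinants. Yours is in fact the more careful version --- the paper's proof simply asserts that the two recombinants of the parents share the same cut position as the element lying between them, whereas you justify this alignment explicitly by proving $p=q$, showing that $p$ must be an extremal index of $D(x,z)$, and then exhibiting the concrete recombinant of $y$ and $w$ that equals $z$.
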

\begin{proof} Let $R_{1}$ be 1-point crossover operator.  Since
  $|R(u,x)|=|R(v,y)|=2$, it follows that $u$ and $x$ as well as $v$ and $y$
  differ in only a single coordinate. Writing
  $u=(u_{1}, u_{2}, \ldots, u_{i}, u_{i+1},\ldots,u_{n})$ and 
  assuming $u\in R(x,y)$ we must have either
  \begin{description}
    \item[(1)]  $u=(u_{1},u_{2},\ldots,u_{i},u_{i+1},\ldots,u_{n}) = 
                   (x_{1},x_{2},\ldots,x_{i},y_{i+1},\ldots,y_{n})$, or 
    \item[(2)]  $u=(u_{1},u_{2},\ldots,u_{i},u_{i+1},\ldots,u_{n}) =
                   (y_{1},y_{2},\ldots,y_{i},x_{i+1},\ldots,x_{n})$.
  \end{description}
  W.l.o.g., suppose $u$ is of the form (1), therefore 
  $u_{1}, u_{2}, \ldots,u_{i}=x_{1},x_{2},\ldots,x_{i}$ and 
  $u_{i+1},\ldots,u_{n}=y_{i+1},\ldots,y_{n}$. Let $x\in R(u,v)$. 
  Since $u$ is of the form (1), we have 
  $u_{1}, u_{2}, \ldots,u_{i}=x_{1},x_{2},\ldots,x_{i}$ and 
  $x_{i+1},\ldots,x_{n}=v_{i+1},\ldots,v_{n}$. Let $y\in R(u,v)$. 
  since $u$ is of the form (1), we have 
  $y_{1},y_{2},\ldots,y_{i}=v_{1},v_{2},\ldots,v_{i}$ and 
  $y_{i+1},\ldots,y_{n}=u_{i+1},\ldots,u_{n}$. 
  Hence $v=(v_{1},v_{2},\ldots,v_{i},v_{i+1},\ldots,v_{n})$ 
  can be written as\\ 
  $v=(y_{1},y_{2}, \ldots, y_{i},x_{i+1},\ldots,x_{n})$, which 
  implies $v\in R(x, y)$. Thus the axiom (S1) follows.
  \
\end{proof}

\begin{lemma}\label{S2}
The 1-point crossover operator $R_{1}$ satisfies axiom {\rm(S2)}.
\end{lemma}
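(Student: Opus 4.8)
The plan is to run the argument entirely on the level of coordinate representations, parallel to the proof of Lemma~\ref{s1} (and in the spirit of Lemma~\ref{lem:Pasch}), simply relabelling the variables of (S2) to match the crossover picture. By Lemma~\ref{lem:GRk} the underlying graph $G_{R_1}$ is the Hamming graph $\prod_i K_{a_i}$, so $|R_1(p,q)|=2$ is equivalent to $d(p,q)=1$, i.e.\ $p$ and $q$ agree in all but one coordinate, and a membership $u\in R_1(p,q)$ says precisely that $u$ is a prefix of $p$ concatenated with the complementary suffix of $q$, or \emph{vice versa}. First I would rewrite each of the four hypotheses of (S2) in these terms.

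From $|R_1(x,y)|=2$ and $|R_1(y,w)|=2$ we obtain a coordinate $p$ with $x$ and $y$ agreeing off $p$, and a coordinate $q$ with $y$ and $w$ agreeing off $q$; since $R_1(y,w)=\{y,w\}$, the hypothesis $z\notin R_1(y,w)$ reduces to the two inequalities $z\ne y$ and $z\ne w$. The substantive input is the pair of conditions on $R_1(x,z)$: choosing the breakpoint $s$ realising $y\in R_1(x,z)$ and using that $y$ agrees with $x$ everywhere except at $p$, one reads off that $s$ lies on one fixed side of $p$ and that $z$ coincides with $x$ on the other side of $p$ except possibly at $p$ itself, while $w\notin R_1(x,z)$ forbids the breakpoint placements that would otherwise express $w$ as a recombinant of $x$ and $z$. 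Then I would split on whether $p=q$. If $p\ne q$, then $x$ and $w$ differ only in the two positions $p$ and $q$, and placing the single crossover cut at position $p$ — taking the $x$-prefix up to $p-1$ and the $w$-suffix from $p$ when $q>p$, and symmetrically when $q<p$ — exhibits $y$ explicitly as an element of $R_1(x,w)$, so the conclusion holds directly. If $p=q$, then $x$, $y$, $w$ pairwise agree off the single position $p$, and here one must invoke $z\ne y$, $z\ne w$ and $w\notin R_1(x,z)$ together with axiom (T1) (recall $x\in R_1(x,z)$ always) to see that this configuration is incompatible with the hypotheses, so that the implication holds vacuously; in the binary case this is immediate, since $p=q$ already forces $w=x$ and hence contradicts $w\notin R_1(x,z)$.

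The main obstacle is the bookkeeping in the coincident case $p=q$, and more generally the step of converting the negative hypothesis ``$w$ is not a recombinant of $x$ and $z$'' into a usable positional statement rather than a bare negation: one has to track simultaneously the letters occurring in the coordinate(s) where $x$, $y$, $w$, $z$ disagree and the location of the crossover cut that realises $y\in R_1(x,z)$. Once that translation is carried out, every remaining sub-case collapses to a one-line verification obtained by writing down the appropriate breakpoint, exactly as in the proof of Lemma~\ref{lem:Pasch}.
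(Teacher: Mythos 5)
Your reading of (S2) is not the one the paper's proof actually establishes. The statement of (S2) as printed is garbled, and the proof in the paper shows that the two adjacency hypotheses are meant to concern the pairs $\{x,y\}$ and $\{z,w\}$ (in the proof's own lettering, $|R(u,x)|=|R(v,y)|=2$), i.e.\ the intended axiom is: $|R(x,y)|=|R(z,w)|=2$, $y\in R(x,z)$, $w\notin R(x,z)$, $z\notin R(y,w)$ imply $y\in R(x,w)$. There $x$ and $w$ may be arbitrarily far apart, which is where the whole content of the axiom lies; the paper uses $z\notin R(y,w)$ to force the suffix of $y$ from the coordinate where $z$ and $w$ differ to disagree with the corresponding suffix of $z$, thereby locating the cut that realizes $y\in R(x,z)$, and then writes $y$ explicitly as a $w$-prefix concatenated with an $x$-suffix. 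Your literal reading $|R(x,y)|=|R(y,w)|=2$ instead forces $d(x,w)\le 2$, which, as you yourself observe, makes the conclusion automatic whenever $p\ne q$ --- a strong hint that this reading trivializes the axiom and cannot be the intended one.

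Independently of the reading, your disposal of the case $p=q$ is wrong. Lemma~\ref{S2} sits in Section~\ref{sect:basic}, which works over general alphabets ($a_i\ge 2$); the restriction to binary strings is only imposed at the start of Section~\ref{sect:GTk}. Over $\mathcal{A}=\{0,1,2\}$ with $n=2$ take $x=00$, $y=10$, $w=20$, $z=11$: then $|R_1(x,y)|=|R_1(y,w)|=2$ with $p=q=1$, $y\in R_1(x,z)=\{00,01,10,11\}$, $w\notin R_1(x,z)$, and $z\notin R_1(y,w)=\{10,20\}$, yet $y\notin R_1(x,w)=\{00,20\}$. So the configuration $p=q$ is \emph{not} incompatible with your hypotheses --- in particular $w\notin R_1(x,z)$ holds automatically there, since every recombinant of $x$ and $z$ carries either $x_p$ or $z_p=y_p$ in position $p$ and never $w_p$ --- and under your reading the implication is false rather than vacuous. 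Your observation that $p=q$ forces $w=x$ is correct only for a binary alphabet and does not extend. The repair is not more bookkeeping in the coincident case but replacing $|R(y,w)|=2$ by $|R(z,w)|=2$ and running the coordinate argument the paper gives.
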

\begin{proof}
From $|R(u,x)|=|R(v,y)|=2$, it follows that $u$ and $x$ differ 
in only one coordinate, say $i$, and $v$ and $y$ differ in a single
coordinate, say $j$. W.l.o.g., let $i\leq j$. 
Since $v\notin R(x,y)$, $y$ and $v$ differ only in position $j$, 
we conclude that
\begin{description}
  \item[(*)]  $x_{j},\ldots,x_{n}\neq v_{j},\ldots,v_{n}$.
\end{description}
From $x\in R(u,v)$ and (*) we obtain 
$x_{1},\ldots,x_{j-1}=v_{1},\ldots,v_{j-1}=y_{1},\ldots, y_{i-1}$. 
Hence $x_{j},\ldots,x_{n}=u_{j},\ldots, u_{n}$. 
Therefore $x=(x_{1},\ldots, x_{j-1}, x_{j},\ldots,x_{n}) =
(y_{1},\ldots, y_{j-1}, u_{j},\ldots, u_{n})$. 
This implies $x\in R(u,y)$ and axiom (S2) follows.
\
\end{proof} 

As shown in \cite{MulderNebesky:09}, the axiom
\begin{description}
\item[(MO)] $R(x,y) \cap R(y,z) \cap R(z,x) \ne \emptyset$
\end{description}
implies both (S1) and (S2).

On hypercubes, i.e., assuming an alphabet with just two letters, uniform
crossover $R=\widehat{R_k}$ satisfies $|R(x,y) \cap R(y,z) \cap
R(z,x)|=1$. The unique \emph{median} $m=R(x,y) \cap R(y,z) \cap R(z,x)$ is
defined coordinate-wise by majority voting of $x_i,y_i,z_i\in\{0,1\}$, see
\cite{Mulder:80}. On hypercubes, $\widehat{R_k}$ thus satisfy (MO).
This argument fails, however, for general Hamming graphs. The reason is
that axiom (MO) fails for each position at which the three sequences
$x,y,z$ are pairwise distinct: $\{0,1\}\cap\{1,2\}\cap\{2,0\}=\emptyset$.

For $z \in R_k(x,y)$ let $I$ denote the set of indices $0=i_0\le i_1\le i_2
\le i_k=n$ from definition \ref{def:xover} such that $z$ is a $k$-point
crossover offspring of $x$ and $y$. If $z$ is an offspring such that $x$ is
placed before $y$ in the definition we denote this by $z=x\times_{I}y$ and
$z=y\times_{I}x$ otherwise.

\begin{lemma}\label{lemma:switch}
  Let $d(a,b) > k+1$. If $s=a \times_{I} b$, $t=a \times_{I} b$ 
  and $|I|=k$, then\\ 
  $s \times_{j} t \notin R_k(a,b)$ and 
  $t \times_{j} s \notin R_k(a,b)$ holds for all $j \notin I$.
\end{lemma}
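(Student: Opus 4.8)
The plan is to reduce to the binary case and then count the maximal constant blocks of the recombinant $s\times_j t$, where $t=b\times_I a$ is the complementary recombinant produced by the same breakpoints as $s$ (so that $s\times_j t$ is not trivially $s$ again).

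First I would delete the coordinates on which $a$ and $b$ agree, since there every recombinant is forced; on the remaining $m:=d(a,b)>k+1$ coordinates we may take $a=00\dots0$ and $b=11\dots1$. A recombinant of $a$ and $b$ is then a binary string of length $m$, and it lies in $R_k(a,b)$ exactly when it is a concatenation of at most $k+1$ maximal blocks taken alternately from $a$ and from $b$---equivalently, when it can be produced with at most $k$ breakpoints (cf.\ Lemma~\ref{lem:fullinterval} and its proof). Since $s=a\times_I b$ and $t=b\times_I a$ use all $k$ breakpoints of $I$ and start from opposite parents, they are bitwise complements, $t=\bar s$, and $s$ has exactly $k+1$ alternating blocks $0^{p_1}1^{p_2}0^{p_3}\cdots$ with every $p_r\ge1$; write $0=c_0<c_1<\dots<c_k<c_{k+1}=m$ for the induced cut positions, so $I=\{c_1,\dots,c_k\}$ and block $r$ of $s$ occupies the positions $c_{r-1}+1,\dots,c_r$.

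The heart of the argument is the block count for $w:=s\times_j t$, whose first $j$ letters are copied from $s$ and whose remaining letters are copied from $t=\bar s$. Because $j\notin I$, the cut at $j$ lies strictly inside one block of $s$, say block $\ell$, so $c_{\ell-1}<j<c_\ell$. Hence the prefix $s_1\cdots s_j$ of $w$ consists of the first $\ell-1$ blocks of $s$ followed by the initial part of block $\ell$---altogether $\ell$ runs that alternate in value, the last of value $\epsilon$ say---while the suffix $t_{j+1}\cdots t_m$ consists of the final part of block $\ell$ together with blocks $\ell+1,\dots,k+1$, all read from $\bar s$ and therefore $k+2-\ell$ runs that again alternate in value, the first of value $\bar\epsilon$. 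Since the prefix ends in $\epsilon$ and the suffix begins in $\bar\epsilon$, the two run-lists do not merge at the junction, so $w$ decomposes into exactly $\ell+(k+2-\ell)=k+2$ maximal blocks; the degenerate cases $\ell=1$ (no $s$-blocks in front) and $\ell=k+1$ (no $s$-blocks at the end) are covered by the same count.

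A binary string that splits into $k+2$ maximal blocks cannot be produced with fewer than $k+1>k$ breakpoints, so $w=s\times_j t\notin R_k(a,b)$. The companion claim $t\times_j s\notin R_k(a,b)$ follows identically with the roles of $s$ and $t$ interchanged, or more quickly from $t\times_j s=\overline{s\times_j t}$ together with the fact that bitwise complementation preserves the number of blocks. The delicate points are the two degenerate cases in the block count and the use of the hypotheses: $d(a,b)>k+1$ is exactly what guarantees that the $k+1$ blocks of $s$ leave room for an admissible cut position $j\notin I$, and the condition $j\notin I$ is genuinely needed, for a cut at an actual breakpoint $c_r\in I$ would make the complemented tail from $t$ merge with block $r$ of $s$, leaving $w$ with only $k$ blocks and hence inside $R_k(a,b)$.
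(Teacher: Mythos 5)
Your proof is correct and follows the same idea as the paper's, whose entire argument is the assertion that $s\times_j t=(a\times_I b)\times_j(b\times_I a)$ lies in $R_{k+1}(a,b)\setminus R_k(a,b)$; your count of $k+2$ maximal blocks after restricting to the coordinates where $a$ and $b$ differ is precisely the justification the paper leaves implicit, including the key observation that the runs cannot merge at the junction $j$ because the suffix is read from the complement. Note only that, like the paper, you tacitly assume that every interval of $I$ and the new cut at $j$ genuinely separate positions where $a$ and $b$ differ (your ``every $p_r\ge1$'' and ``the cut at $j$ lies strictly inside one block''), which is how the lemma must be read for it to hold.
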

\begin{proof}
  Since $j \notin I$ it follows that 
  $s \times_{j} t = (a \times_{I} b)\times_{j} (b \times_{I} a)$ and 
  $t \times_{j} s = (b \times_{I} a)\times_{j} (a \times_{I} b)$, 
  we have $s \times_{j} t, t \times_{j} j 
  \in R_{k+1}(a,b) \setminus R_{k}(a,b)$.
  \
\end{proof}

\cite{Gitchoff:96} conjectured that for each transit set $R_k(x,y)$
there is a unique pair of parents from which it is generated unless
$R_k(x,y)$ is a hypercube. We settle this conjecture affirmatively:

\begin{theorem}\label{uniquetransitsets}
If $d(u,v),d(x,y)>k+1$ then
$R_k(u,v) = R_k(x,y)$ if and only if $\{u,v\} = \{x,y\}$.
\end{theorem}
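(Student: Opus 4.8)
The plan is to prove the nontrivial direction: assuming $R_k(u,v) = R_k(x,y)$ with both $d(u,v), d(x,y) > k+1$, conclude $\{u,v\} = \{x,y\}$. (The reverse implication is immediate.) The key idea is that when $d(u,v) > k+1$, the set $R_k(u,v)$ is \emph{not} a subhypercube (by Lemma~\ref{lem:fullinterval} it is a proper subset of $I_{G_R}(u,v)$), and moreover $u$ and $v$ are distinguished among the points of $R_k(u,v)$ by an intrinsic extremality property that no other pair shares. So the first step is to pin down such a characterization of the generating pair purely in terms of the set $S := R_k(u,v)$.

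The natural candidate is a diametral/antipodality condition. First I would observe that $u$ and $v$ both lie in $S$ and satisfy $d(u,v) = \operatorname{diam}(S)$; indeed every $z \in R_k(u,v)$ has $z_i \in \{u_i, v_i\}$ for all $i$, so $d(u,z), d(z,v) \le d(u,v)$, and also $d(z,z') \le d(u,v)$ for any two recombinants since they can only differ in coordinates where $u,v$ differ. Thus $\{u,v\}$ is a diametral pair of $S$. The crux is to show it is the \emph{only} diametral pair when $d(u,v) > k+1$. Suppose $\{x,y\} \subseteq S$ is another diametral pair, so $d(x,y) = d(u,v) =: m$. Diametrality forces $x_i \ne y_i$ in exactly those $m$ coordinates where $u_i \ne v_i$, hence $\{x_i, y_i\} = \{u_i, v_i\}$ on those coordinates and $x_i = y_i = u_i = v_i$ elsewhere; so on each ``active'' coordinate, $x$ picks one of the two parental letters and $y$ picks the other. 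This means $x = u \times_I v$ and $y = u \times_J v$ for complementary index patterns — equivalently $x$ and $y$ are ``antipodal'' within the subhypercube $I_{G_R}(u,v)$. Now I invoke Lemma~\ref{lemma:switch}: if $x \in R_k(u,v)$ arises with exactly $k$ breakpoints as $a \times_I b$ and differs from both $u$ and $v$ enough (which it does, since $d(u,v) > k+1$ forces $x$ to use all available cuts in a way that its antipode within $I_{G_R}(u,v)$ needs more than $k$ cuts), then recombining $x$ with its antipode $y$ at a new breakpoint leaves $R_k$. The upshot is that $y = $ (antipode of $x$ in the subcube) $\notin R_k(u,v)$ unless $\{x,y\} = \{u,v\}$, because any honest $k$-point description of $x$ that is ``balanced'' with its antipode would cost more than $k$ cuts total.

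Concretely, the argument I would run is: write $x \in R_k(u,v)$ via a minimal set of breakpoints; the antipode $\bar x$ (obtained by swapping $u \leftrightarrow v$ in every active coordinate) needs at least as many breakpoints, and the two breakpoint-counts are complementary in the sense that a coordinate-run assigned to $u$ in $x$ is assigned to $v$ in $\bar x$ and vice versa, so together they partition the $m-1$ internal gaps between consecutive active coordinates; if $x$ uses $p \le k$ cuts then $\bar x$ uses $\ge (m-1) - p$ cuts, and for $\bar x \in R_k(u,v)$ we need $(m-1) - p \le k$, i.e. $m - 1 \le 2k$ — but this is compatible with $m > k+1$, so I need to be more careful and instead use the sharper fact that \emph{both} $x$ and $\bar x$ being diametral with the \emph{correct} coordinates forces them to be exactly $u$ and $v$ up to swapping the roles on each run, and then Lemma~\ref{lemma:switch} kills any genuinely new pair. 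I expect the main obstacle to be exactly this bookkeeping: making precise why a diametral pair inside $R_k(u,v)$ other than $\{u,v\}$ would require an element of $R_{k+1} \setminus R_k$ to already lie in $R_k$. Once the ``unique diametral pair'' claim is established, the theorem follows instantly: $R_k(u,v) = R_k(x,y)$ means the two sets have the same unique diametral pair, so $\{u,v\} = \{x,y\}$.
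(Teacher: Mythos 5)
There is a genuine gap, and it sits exactly where you flagged it. Your strategy is to make the generating pair intrinsically recognizable inside the set $S=R_k(u,v)$ as ``the unique diametral pair'', but that claim is false: for \emph{every} admissible breakpoint set $I$ with $|I|\le k$, the two complementary recombinants $u\times_I v$ and $v\times_I u$ both lie in $R_k(u,v)$ and differ in every active coordinate, hence form a diametral pair of $S$. Concretely, $0011$ and $1100$ form a diametral pair of $R_2(0000,1111)$ distinct from $\{0000,1111\}$. Relatedly, the ``upshot'' you draw from Lemma~\ref{lemma:switch} --- that the antipode $y$ of $x$ inside the subcube lies outside $R_k(u,v)$ unless $\{x,y\}=\{u,v\}$ --- is not what that lemma says and is also false ($y=v\times_I u$ lies in $R_k(u,v)$ whenever $|I|\le k$). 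The lemma excludes from $R_k(u,v)$ not $y$ itself but the \emph{one-point recombinant} $x\times_j y$ for any cut position $j\notin I$.

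That last observation is the step that closes the proof, and it is how the paper argues. From $R_k(u,v)=R_k(x,y)$ one first gets $d(u,v)=d(x,y)$ (the paper passes through $\widehat{R_k}$; your diametrality computation gives the same), whence $x=u\times_I v$ and $y=v\times_I u$ for some $I$ with $|I|=k$. Since $d(x,y)>k+1$ there are more than $k$ gaps between active coordinates, so an active $j\notin I$ exists; Lemma~\ref{lemma:switch} then gives $x\times_j y\notin R_k(u,v)$, while trivially $x\times_j y\in R_1(x,y)\subseteq R_k(x,y)$. This contradicts the assumed set equality directly --- no intrinsic characterization of the parents and no cut-counting inequality such as your $m-1\le 2k$ is needed. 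In short: test the two transit sets on a witness element generated from $\{x,y\}$, rather than trying to recover $\{u,v\}$ from the set alone.
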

\begin{proof} The implication from right to left is trivial. 
  For other direction we use Lemma \ref{lemma:switch}. 
  Assume, for contradiction, that $R_k(u,v) = R_k(x,y)$ and 
  $\{u,v\} \neq \{x,y\}$. Then $x,y \in R_k(u,v)$ and $u,v \in R_k(x,y)$. 
  From $R_k(u,v) = R_k(x,y)$ it follows also that 
  $\widehat{R_k}(u,v)=\widehat{R_k}(x,y)$, which in turn implies 
  $d(u,v)=d(x,y)$. Therefore, there exists a set of indices $I$, 
  $|I| = k$, such that $x=u \times_{I} v$ and $y=v \times_{I} u$. 
  From $d(x,y)>k+1$ and Lemma \ref{lemma:switch} we conclude that 
  there exist $j \notin I$ such that $x \times_{j} y \notin R_k(u,v)$. 
  Hence $R_k(u,v) \neq R_k(x,y)$. This contradiction completes 
  the proof of the theorem.
  \
\end{proof} 

For the special case $k=1$, Theorem \ref{uniquetransitsets} for $k=1$
implies the following statement.
\begin{description}
\item[(H3)] For every $x,y,u,v \in X$, $u\neq v$, $x \neq y$, $|R(x,y)|>4$,
  $R(u,v) \subseteq R(x,y)$ implies that either $R(u,v)=\{u,v\}$ or
  $\{u,v\}=\{x,y\}$.
\end{description}
For $x,y\in X$ with $d(x,y)=t \geq 3$, the transit set $R_1(x,y)$ induces a
cycle of size $2t$, an hence the only other transit sets that are included
in $R_1(x,y)$ are singletons and edges. 

\section{Graph theoretical approach for $k$-point crossover operators}
\label{sect:GTk}

Transit sets $R(x,y)$ inherit a natural graph structure as an induced
subgraph of the underlying graph $G_R$. In the case of crossover operators
and their corresponding transit sets $R_k(x,y)$, the distance in the
underlying graph plays a crucial role in their characterization.

Recall that $n$-dimensional hypercubes are antipodal graphs, i.e., for any
vertex $v$ there is a unique antipodal vertex $\overline{v}$ with
$d(v,\overline{v})= \text{diam} (G) = n$, where $\text{diam} (G)$ denotes the diameter of graph $G$. The vertex $\overline{v}$ is
obtained from $v$ by reversing all coordinates.
    
\begin{theorem}\label{thm:antipodalpc}
  $R_k(x,y)$ induces an antipodal graph such that 
  $\overline{x}=y$, $\overline{y}=x$, and for each $u \in
  R_k(x,y)$ of the form $u=x \times_{I} y$ we have 
  $\overline{u}=y \times_{I} x$.
\end{theorem}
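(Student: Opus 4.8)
The plan is to reduce everything to a hypercube, write down the candidate antipodal map explicitly as coordinate complementation, prove that $R_k(x,y)$ sits isometrically inside that hypercube, and then read off the antipodal property. First I would set $t=d(x,y)$ and invoke MP-geometricity: since $R_k(x,y)\subseteq I_{G_{R_k}}(x,y)$, every element of $R_k(x,y)$ agrees with $x$ and $y$ on the $n-t$ coordinates where $x_i=y_i$. Hence the induced subgraph $R_k(x,y)$, and every distance in it, depends only on the $t$ coordinates on which $x$ and $y$ differ; on these I may take $x=0\dots0$ and $y=1\dots1$, so that $R_k(x,y)$ becomes an induced subgraph of the hypercube $Q\cong K_2^{t}$. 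In the $\times_I$ notation, a word $u\in\{0,1\}^{t}$ lies in $R_k(x,y)$ exactly when it breaks into alternating constant blocks with at most $k$ breakpoints, i.e.\ when its number of sign changes $b(u):=|\{\,i:1\le i<t,\ u_i\neq u_{i+1}\,\}|$ is at most $k$; adjacency in the induced subgraph is Hamming distance $1$.

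Next I would define the antipodal map. For $u=x\times_I y$ put $\overline u:=y\times_I x$; this uses the same index set $I$, so $\overline u\in R_k(x,y)$, and since $x_j\neq y_j$ for all $j\le t$, the word $\overline u$ is simply the coordinatewise complement of $u$ — hence independent of the chosen representation, an involution, with $\overline x=y$ and $\overline y=x$. Complementation preserves ``differs in exactly one coordinate'' and, because $b(\overline u)=b(u)$, preserves membership in $R_k(x,y)$, so $u\mapsto\overline u$ is an involutive automorphism of the induced graph. This already supplies the explicit form of the antipodal map asserted in the theorem; what remains is to verify that it really \emph{is} the antipodal map.

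The heart of the proof is to show that $R_k(x,y)$ is an isometric subgraph of $Q$. It suffices to prove that for distinct $u,v\in R_k(x,y)$ there is a coordinate $j$ with $u_j\neq v_j$ whose complementation keeps $b\le k$; induction on $d(u,v)$ then builds a $Q$-geodesic from $u$ to $v$ inside $R_k(x,y)$, so the induced-subgraph distance equals $d(u,v)\le t$. Since complementing one coordinate changes $b$ by at most $2$, this is clear when $b(u)\le k-2$. Otherwise I would pick a maximal interval $[p,q]$ of the disagreement set $\{\,j:u_j\neq v_j\,\}$ — on which $u$ equals the complement of $v$, while $u$ and $v$ agree just outside it — and try the flips at $j=p$ and $j=q$ (only at $j=q$ if $p=1$, only at $j=p$ if $q=t$). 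If one of these flips does not increase $b$, it works. If both would increase $b$, then for an interior endpoint the increase is exactly $2$, and a short count of sign changes on the window $[p-1,q+1]$, using $u=\overline v$ there, shows that $v$ carries two more sign changes than $u$, i.e.\ $b(v)=b(u)+2\le k$; hence $b(u)\le k-2$ after all and either flip is admissible. The main obstacle is getting this case analysis exactly right when $[p,q]$ reaches an end of the string, since there the relevant flip changes $b$ by $\pm1$ rather than $\pm2$ and the sign-change count must be redone for the half-window.

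Finally, from the isometry step the distance in $R_k(x,y)$ equals the Hamming distance, which is at most $t$ and attains $t$ at the pair $x,y$, so $\operatorname{diam}R_k(x,y)=t$. For any $u$ the words $u$ and $\overline u$ differ in all $t$ coordinates, hence are at distance $t=\operatorname{diam}R_k(x,y)$; and any $w\in R_k(x,y)$ with $d(u,w)=t$ must differ from $u$ in every coordinate, so $w=\overline u$. Therefore $R_k(x,y)$ induces an antipodal graph whose antipodal map is $u\mapsto\overline u$, which interchanges $x$ and $y$ and sends $u=x\times_I y$ to $y\times_I x$, as claimed.
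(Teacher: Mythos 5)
Your overall plan is the right one, and it is considerably more careful than the paper's own one-line proof, which essentially just asserts the conclusion: once you know that $R_k(x,y)$ is an \emph{isometric} subgraph of the sub-hypercube $K_2^{t}$ spanned by the $t=d(x,y)$ differing coordinates, the induced distance is the Hamming distance, the diameter is $t$, and the unique vertex at distance $t$ from $u=x\times_I y$ is its coordinatewise complement $y\times_I x$. Your reduction to $\{0,1\}^t$, the identification of the antipode with complementation, and the final deduction of antipodality from isometry are all correct.

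The gap is in your proof of the isometry step. When both endpoint flips of the chosen maximal disagreement block $[p,q]$ raise $b(u)$ by $2$, you infer from a sign-change count on the window $[p-1,q+1]$ that $b(v)=b(u)+2$ and hence $b(u)\le k-2$. That count is only local: $u$ and $v$ may disagree on several maximal blocks, and the boundary contributions of the other blocks can cancel the $+2$ you found at $[p,q]$. Concretely, take $t=10$, $k=2$, $u=0000110000$ and $v=0110000000$; then $b(u)=b(v)=2$, the disagreement set is $\{2,3\}\cup\{5,6\}$, and for the block $[2,3]$ both flips of $u$ give $b=4>k$, yet $b(v)=b(u)$ rather than $b(u)+2$, and $b(u)=2>k-2=0$, so your fallback conclusion fails. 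The lemma itself is true and the argument is repairable: whenever the flip of $u_p$ at an interior block boundary is blocked one has $u_{p-1}=u_p$, hence $v_{p-1}=u_{p-1}\neq\overline{u_p}=v_p$, so the corresponding flip of $v_p$ removes a sign change of $v$ and is admissible; since a geodesic may be grown from either endpoint, the induction on $d(u,v)$ still goes through (in the example, flipping $v_2$ works), though the blocks touching the ends of the string, which you explicitly leave open, need their own short case analysis. Alternatively you can avoid the whole computation by citing Corollary~\ref{cor:partialcube1}, which obtains the partial-cube (isometry) property from Theorem~\ref{thm:recurse} and the Gitchoff--Wagner fact that $R_1(x,y)$ induces an isometric cycle; that derivation does not depend on the present theorem, so there is no circularity.
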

\begin{proof} 
  The definition of $R_k$ immediately implies that $x$ and $y$ are at the
  maximal distance from each other and every other $v \in R_k(x,y)$, $v=x
  \times_{I} y$, has a unique vertex at maximal distance in $R_k(x,y)$,
  that is $\overline{v}=y \times_{I} x$.
  \
\end{proof}

Note that here $v$ and $\overline{v}$ are antipodal in a subgraph
$R_k(x,y)$ and will not be antipodal in the underlying graph $G_R$, unless
$d(v,\overline{v})= \text{diam}(G_R)$.  This is not the only property inherited
from hypercubes. We say that $H$ is an \emph{isometric subgraph} of a graph
$G$ if for every pair of vertices $u,v \in V(H)$ the distance from $G$ is
preserved, i.e., if $d_H(u,v)=d_G(u,v)$. Isometric subgraphs of hypercubes
are known as \emph{partial cubes} \cite{handbookproductgraphs:2011,
  ovchinnikov2011cubes}.  \cite{Gitchoff:96} showed (1) that $R_1(x,y)$
induces $C_{2t}$, a cycle of length $2t$, where $t=d(x,y)$, and (2) that
$d_{C_{2t}}(u,v)=d_{G_{R_1}}(u,v)$ holds for every pair $u,v \in
R_1(x,y)$. In other words $R_1(x,y)$ is a partial cube. Theorem
\ref{thm:recurse} implies that this result holds in general:
\begin{corollary}\label{cor:partialcube1}
The $k$-point crossover operator $R_k$ induces a partial cube.
\end{corollary}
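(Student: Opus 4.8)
The plan is to fix a pair of parents $p,q$ and show that the induced subgraph $R_k(p,q)$ of $G_{R_k}$ embeds isometrically into a hypercube. By Lemma~\ref{lem:hatR-I}, $R_k(p,q)\subseteq\widehat{R_k}(p,q)=I_{G_{R_k}}(p,q)$, and the geodesic interval $I_{G_{R_k}}(p,q)$ is a convex, hence isometric, sub-hypercube $Q$ of $G_{R_k}$ of dimension $t:=d(p,q)$; so it suffices to show that $R_k(p,q)$ is isometric inside $Q$. Identifying $Q$ with $\{0,1\}^t$ via the $t$ coordinates on which $p$ and $q$ differ (with $p\mapsto 0\cdots 0$, $q\mapsto 1\cdots 1$, the other coordinates frozen throughout $R_k(p,q)$), Definition~\ref{def:xover} identifies $R_k(p,q)$ with $S_k:=\{z\in\{0,1\}^t : c(z)\le k\}$, where $c(z):=|\{i : z_i\ne z_{i+1}\}|$ is the number of ``breakpoints'' of $z$; for $k\ge t-1$ this is all of $Q$ (which is also Lemma~\ref{lem:fullinterval}), so we may assume $k\le t-2$.

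The heart of the argument is a Djokovi\'c-style progress lemma: for all $z,w\in S_k$ with $z\ne w$ there is a coordinate $i$ with $z_i\ne w_i$ and $z\oplus e_i\in S_k$, where $e_i$ denotes flipping coordinate $i$. Granting this, the corollary follows at once: starting from $z$ and repeatedly flipping such a coordinate produces a path $z=z^{(0)},z^{(1)},\dots,z^{(d(z,w))}=w$ with every $z^{(j)}\in S_k$ and $d(z^{(j)},w)=d(z,w)-j$, i.e.\ a geodesic of $G_{R_k}$ lying inside the induced subgraph $R_k(p,q)$. Hence $R_k(p,q)$ is connected (take $w\in\{p,q\}$) and isometric in $Q$, so it is a partial cube; since $p,q$ were arbitrary, $R_k$ induces a partial cube. (This is also where Theorem~\ref{thm:recurse} could be invoked instead, bootstrapping from Gitchoff's $R_1(p,q)\cong C_{2t}$, but the direct verification seems cleaner.)

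For the progress lemma, put $D:=\{i : z_i\ne w_i\}\ne\emptyset$, so $w$ is obtained from $z$ by flipping exactly the coordinates of $D$. Flipping one coordinate changes $c$ by an amount in $\{-2,-1,0,1,2\}$ (in $\{-1,0,1\}$ for $i\in\{1,t\}$), and this amount is positive exactly when $i$ lies strictly inside a maximal constant run of $z$ of length $\ge 3$ (then it is $+2$) or $i\in\{1,t\}$ and the run at that end has length $\ge 2$ (then it is $+1$). Assume, for contradiction, that no coordinate of $D$ works. Since $c(z)\le k$, every $i\in D$ must then have flip-increment at least $k-c(z)+1\ge 1$, which (the increment being $\le 2$) forces $c(z)\in\{k-1,k\}$ and makes every $i\in D$ of the ``positive'' type above. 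Now flip all of $D$ at once and decompose $D$ into maximal blocks of consecutive indices: the positivity condition forces each block to lie inside a single constant run of $z$, and flipping that block adds $+1$ to $c$ if the block meets an endpoint of the string and $+2$ otherwise; moreover distinct blocks create their new breakpoints at disjoint positions, so these contributions add. If $c(z)=k$ this gives $c(w)\ge c(z)+1>k$; if $c(z)=k-1$ every increment equals $2$, so every block lies strictly inside a run of length $\ge 3$ and contributes $+2$, giving $c(w)\ge c(z)+2>k$. Either way $c(w)>k$, contradicting $w\in S_k$.

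The reduction to $S_k$ and the iteration step are routine given Lemmas~\ref{lem:hatR-I} and~\ref{lem:fullinterval}, so I expect the main obstacle to be the progress lemma, and within it the bookkeeping for flipping the \emph{whole} set $D$ rather than a single coordinate: one must check that the run structure forced by ``every increment is positive'' genuinely makes the per-block contributions additive --- in particular that two blocks of $D$ separated by a single coordinate outside $D$ do not cancel each other's new breakpoints --- and dispose of the boundary cases $i\in\{1,t\}$ and the degenerate possibility that $z$ is constant.
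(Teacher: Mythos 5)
Your proof is correct, and it takes a genuinely different route from the paper. The paper's own argument is a one-liner: it cites Gitchoff--Wagner's observation that $R_1(x,y)$ induces an isometric cycle $C_{2t}$ and then asserts that Theorem~\ref{thm:recurse} (the recursion $R_k(u,v)=\bigcup_{z}[R_1(u,z)\cup R_1(z,v)]$) propagates this to all $k$ --- a deduction whose details are left entirely to the reader and which is not automatic, since a union of isometric subgraphs need not be isometric. You instead prove isometry directly: after the (correct) identification of $R_k(p,q)$ with $S_k=\{z\in\{0,1\}^t: c(z)\le k\}$, your Djokovi\'c-style progress lemma is exactly the right statement, and your verification of it goes through. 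In particular the two points you flag as delicate do check out: (i) a gap of the string with exactly one endpoint in $D$ is never a breakpoint of $z$ under the ``all flips positive'' hypothesis, so no breakpoints are destroyed when all of $D$ is flipped; (ii) two maximal blocks of $D$ are separated by at least one index outside $D$, so the gaps at which they create new breakpoints are distinct and their contributions genuinely add; and (iii) the degenerate case of constant $z$ (a single block $D=[1,t]$ contributing $0$) is excluded because it forces $c(z)=0$ while the hypothesis forces $c(z)\ge k-1$ with endpoints excluded from $D$ when $c(z)=k-1$. What your approach buys is a complete, self-contained proof that also yields the stronger quantitative fact that between any two members of $R_k(p,q)$ there is a hypercube geodesic staying inside $R_k(p,q)$; what the paper's approach buys is brevity and consistency with its recursive viewpoint, at the cost of a step that, as written, is really only a sketch.
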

In particular, therefore, $R_k$ always induces a connected subgraph of $G_R$. 

\smallskip\par\noindent
\emph{\textbf{In the remainder of this section we consider only the binary case.}}

\begin{definition} 
  Let $R$ be a transit function $R$ on a set $X$. Then we set $uv \parallel
  xy$ if and only if $v,x \in R(u,y)$ and $u,y \in R(v,x)$.
\end{definition} 
The binary relation $\parallel$ was introduced by \cite{dress2007x-nets}
in the context of a characterisation of so called $X$-nets, a structure
from phylogenetic combinatorics that is intimately connected with partial
cubes. Indeed, \cite{dress2007x-nets} showed that $\parallel$ can be used
to characterize partial cubes:
\begin{proposition}[\cite{dress2007x-nets}]\label{thm:X-nets}
  Let $G$ be a graph and $R=I_G$, then $G$ is a partial cube if and only if
  the relation $\parallel$ is an equivalence relation on the set of its
  edges.
\end{proposition}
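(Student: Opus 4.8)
Read $\parallel$ as a relation on the \emph{arcs} (ordered adjacent pairs) of $G$, i.e.\ restrict it to pairs $(u,v),(x,y)$ with $\{u,v\},\{x,y\}\in E(G)$; this is the natural reading, since $uv\parallel uv$ always holds whereas $uv\not\parallel vu$. Unwinding the definition of $I_G$ on an edge $\{u,v\}$ (so $d(u,v)=1$), the four membership conditions in ``$uv\parallel xy$'' are together equivalent to $d(u,x)=d(v,y)$ and $d(u,y)=d(v,x)=d(u,x)+1$; hence $\parallel$ on arcs is precisely the oriented Djokovi\'c--Winkler relation on edges, and the proposition is essentially Winkler's characterisation of partial cubes. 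I would prove the two implications separately.

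For ``only if'', assume $G$ is a partial cube and fix an isometric embedding $V(G)\hookrightarrow\{0,1\}^m$ using every coordinate; identify $V(G)$ with its image. Since the embedding is isometric, $I_G(a,b)=I_{\{0,1\}^m}(a,b)\cap V(G)$, and each edge $\{u,v\}$ flips a unique coordinate $\kappa(\{u,v\})$. A direct computation with Hamming distances (of the flavour of those for $I_H$ in Section~\ref{sect:HG}) shows that, for arcs $(u,v)$ and $(x,y)$, $uv\parallel xy$ holds if and only if $\kappa(\{u,v\})=\kappa(\{x,y\})=:c$ and $u_c=x_c$; that is, two arcs are $\parallel$-related exactly when they flip the same coordinate in the same direction. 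This is manifestly an equivalence relation on the arc set.

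For ``if'' I would run the Djokovi\'c--Winkler argument, which additionally uses that $G$ is connected and bipartite, as in the setting of \cite{dress2007x-nets} (partial cubes are bipartite anyway, so this is harmless for the converse). Let $E_1,\dots,E_m$ be the classes of \emph{edges} obtained from the arc-classes of $\parallel$ by forgetting orientation (each is a pair of mutually reverse arc-classes, since $uv\not\parallel vu$). For each $i$ fix a representative $e_i=\{u_i,v_i\}\in E_i$ and put $W_i=\{w:d(w,u_i)<d(w,v_i)\}$, $W_i'=\{w:d(w,v_i)<d(w,u_i)\}$; bipartiteness gives $V(G)=W_i\sqcup W_i'$. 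Define $\phi:V(G)\to\{0,1\}^m$ by $\phi(w)_i=0$ for $w\in W_i$ and $\phi(w)_i=1$ otherwise. The crux is a \emph{convex cut lemma}: transitivity of $\parallel$ implies that for each $i$ the partition $W_i\mid W_i'$ is independent of the chosen representative, an edge $\{w,z\}$ lies in $E_i$ exactly when $w,z$ lie on opposite sides of $W_i\mid W_i'$, and both $W_i$ and $W_i'$ are convex in $G$; in particular every edge flips exactly one coordinate of $\phi$, namely the index of its $\parallel$-class. Granting this, $d(a,b)\ge|\{i:\phi(a)_i\neq\phi(b)_i\}|$ for all $a,b$, since along any walk coordinate $i$ of $\phi$ flips only on $E_i$-edges and must flip an odd number of times when $\phi(a)_i\neq\phi(b)_i$. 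For the reverse inequality, take a shortest $a$--$b$ path $P$: for each $i$ with $\phi(a)_i=\phi(b)_i$ both endpoints lie in one convex half-space, so $P$ stays inside it and uses no $E_i$-edge; for each $i$ with $\phi(a)_i\neq\phi(b)_i$, convexity of the half-spaces forces $P$ to cross $E_i$ exactly once (its initial segment up to the first vertex on the far side stays in the near half-space, and the rest stays in the far one). Hence the length of $P$ is $|\{i:\phi(a)_i\neq\phi(b)_i\}|$, so $\phi$ is an isometric embedding into a hypercube and $G$ is a partial cube.

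The distance bookkeeping in both directions is routine; the real content, and the step I expect to be the main obstacle, is the convex cut lemma in the ``if'' direction. One must show that each $\parallel$-class is a \emph{single} convex cut, and in particular that transitivity of $\parallel$ is what upgrades the statement ``each individual edge defines a partition $W_i\mid W_i'$'' (valid in every bipartite graph but, as $K_{2,3}$ shows, not enough on its own) to ``the whole class defines one convex cut''. This is precisely where the hypothesis is used in full strength.
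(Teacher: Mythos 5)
The paper does not actually prove this proposition---it is quoted from \cite{dress2007x-nets}---so there is no in-text argument to compare yours against; what follows assesses your attempt on its own terms. Your reduction is the right one and the computations behind it check out: for an edge $\{u,v\}$ the four membership conditions in $uv\parallel xy$ force $d(x,y)=1$, $d(u,x)=d(v,y)$ and $d(u,y)=d(v,x)=d(u,x)+1$, so $\parallel$ on arcs is the oriented Djokovi\'c--Winkler relation and the proposition is Winkler's characterisation of partial cubes. Your ``only if'' direction is complete: in an isometric hypercube embedding two arcs are $\parallel$-related precisely when they flip the same coordinate in the same direction, which is visibly an equivalence relation. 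You are also right to insist that connectedness and bipartiteness must be standing hypotheses: in $K_3$ no two distinct arcs are $\parallel$-related at all, so $\parallel$ is vacuously an equivalence relation although $K_3$ is not a partial cube; the proposition as printed silently inherits these hypotheses from the cited source.

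The genuine gap is the one you flag yourself: the ``convex cut lemma'' is the entire content of the hard direction, and you announce it rather than prove it. Without it, your ``if'' direction reduces the claim to Djokovi\'c's characterisation (all half-spaces $W_i$, $W_i'$ convex) instead of deriving that characterisation from transitivity of $\parallel$. The missing argument is short but is the only place transitivity enters, so it cannot be waved through. Concretely: (i) in \emph{any} graph, no two distinct edges of a geodesic $x_0x_1\dots x_k$ are $\parallel$-related in either orientation, because $d(x_i,x_j)+d(x_{i+1},x_{j+1})=d(x_i,x_{j+1})+d(x_{i+1},x_j)$ for $i<j$, whereas $uv\parallel xy$ forces these two sums to differ by $2$; (ii) in a connected bipartite graph, every edge with one end in $W_i$ and the other in $W_i'$ is $\parallel$-related (suitably oriented) to the representative $e_i$, by the parity argument on $d(\cdot,u_i)$ and $d(\cdot,v_i)$; hence (iii) a geodesic between two vertices of $W_i$ that left $W_i$ would contain two distinct edges each $\parallel$-related to $e_i$, and \emph{transitivity} would make them $\parallel$-related to each other, contradicting (i). This yields convexity of both half-spaces, independence of the representative, and the identification of $E_i$ with the boundary edges of the cut; only then does your distance bookkeeping close the argument. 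As submitted, the proposal is a correct plan with its central lemma unestablished.
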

By the definition, the relation $\parallel$ is reflexive and
symmetric. Therefore it suffices to require that $\parallel$ is a
transitive relation. Proposition \ref{thm:X-nets} thus can be translated
into the language of transit functions:
\begin{theorem}
\label{thm:partialcubes}
Let $R$ be a transit function on a set $X$. Then the underlying graph $G_R$
is partial cube if and only if $R$ satisfies:
\begin{description}
\item[{\rm(AX)}] for all $a,b,c,d,e,f \in X$, with
  $|R(a,b)|=|R(c,d)|=|R(e,f)|=2$ and $ab \parallel cd$ and $cd \parallel
  ef$ it follows that $ab \parallel ef$ .
\end{description}
\end{theorem}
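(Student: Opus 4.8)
The plan is to obtain Theorem~\ref{thm:partialcubes} as a translation of Proposition~\ref{thm:X-nets} applied to the graph $G=G_R$ together with its own interval function $I_{G_R}$. Since, as already noted, $\parallel$ is reflexive and symmetric, axiom (AX) says exactly that the relation $\parallel$ obtained from $R$, restricted to the edge set $E(G_R)$, is transitive, i.e.\ an equivalence relation on $E(G_R)$. On the other hand, Proposition~\ref{thm:X-nets} applied to $G_R$ states that $G_R$ is a partial cube if and only if the relation $\parallel$ obtained from $I_{G_R}$ is an equivalence relation on $E(G_R)$. Hence everything reduces to comparing these two incarnations of $\parallel$ on the edges of $G_R$.

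First I would clear away the degenerate cases. If $uv\parallel xy$ for edges $uv$, $xy$ of $G_R$ then either $\{u,v\}=\{x,y\}$ with the matching orientation, or $\{u,v\}\cap\{x,y\}=\emptyset$: as soon as two of the four vertices $u,v,x,y$ coincide, one of the four membership requirements $v,x\in R(u,y)$, $u,y\in R(v,x)$ degenerates to demanding that a vertex lie in a transit set of size~$2$ that does not contain it, which contradicts $|R(u,v)|=|R(x,y)|=2$; the same reasoning applies verbatim with $R$ replaced by $I_{G_R}$. So it suffices to compare the two relations on ordered pairs of vertex-disjoint edges. For such a pair, $uv\parallel_{I_{G_R}}xy$ is equivalent to the ``convex quadrilateral'' condition $d(u,v)=d(x,y)=1$, $d(u,x)=d(v,y)=d(u,y)-1$, $d(v,x)=d(u,y)$, and the heart of the proof is to show that precisely these configurations are the ones picked out by $v,x\in R(u,y)$ and $u,y\in R(v,x)$ when $uv$ and $xy$ are edges of $G_R$.

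I expect this last equivalence to be the main obstacle, because $R$ and $I_{G_R}$ need not agree as transit functions away from the edges of $G_R$: $R$ need not even be MP-geometric, so one cannot simply substitute $I_{G_R}$ for $R$. The argument therefore has to exploit the rigidity coming from the hypotheses $|R(a,b)|=|R(c,d)|=|R(e,f)|=2$ --- that the three pairs occurring in (AX) are forced to be genuine edges of $G_R$ --- together with the transit axioms (T1)--(T3), to show that the membership pattern defining $\parallel$ on triples of edges does not depend on the particular transit function realising $G_R$. Once the two relations are shown to coincide on $E(G_R)$, the biconditional of Theorem~\ref{thm:partialcubes} is immediate from Proposition~\ref{thm:X-nets}. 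Finally, one should make sure the connectedness hypothesis hidden in Proposition~\ref{thm:X-nets} and in the very notion of a partial cube is available here; in the direction ``$G_R$ a partial cube $\Rightarrow$ (AX)'' it is part of the assumption, while for the converse one would invoke (or adapt) the connectedness results of Section~\ref{sect:HG}, in the spirit of Corollary~\ref{cor:partialcube1}.
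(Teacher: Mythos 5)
Your route is the same as the paper's: read (AX) as transitivity of $\parallel$ on $E(G_R)$ and invoke Proposition~\ref{thm:X-nets}. (The paper offers nothing more than the remark that the proposition ``can be translated into the language of transit functions''.) To your credit, you have isolated exactly what this translation silently assumes: that the relation $\parallel$ computed from $R$ coincides, on the edges of $G_R$, with the relation $\parallel$ computed from $I_{G_R}$. The difficulty is that the step you defer as ``the main obstacle'' is not an obstacle to be overcome but a claim that is false for general transit functions. Whether $uv\parallel xy$ holds is decided by the sets $R(u,y)$ and $R(v,x)$ evaluated at the \emph{diagonals} $\{u,y\}$ and $\{v,x\}$, which are non-adjacent pairs; the values of a transit function on non-adjacent pairs are in no way determined by its underlying graph, so the hypotheses $|R(a,b)|=|R(c,d)|=|R(e,f)|=2$ pin down only the endpoints, not the membership pattern. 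Concretely, let $X=\{1,\dots,6\}$ and set $R(x,y)=\{x,y\}$ when $x,y$ are equal or consecutive on the $6$-cycle, and $R(x,y)=X$ otherwise. This is a transit function with $G_R=C_6$, a partial cube. Yet $12\parallel 43$ and $43\parallel 56$ (all diagonals involved are non-adjacent pairs, whose transit sets are all of $X$), while $12\not\parallel 56$ because $R(1,6)=\{1,6\}$ does not contain $2$ and $5$. So (AX) fails although $G_R$ is a partial cube.

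Consequently the ``only if'' direction cannot be obtained by showing the two incarnations of $\parallel$ agree --- they do not --- and in fact the theorem as literally stated (for an arbitrary transit function $R$) is falsified by the example above; it becomes correct only under additional hypotheses tying $R$ on the diagonal pairs to the geodesic structure of $G_R$ (for instance $R=I_{G_R}$, or an axiom forcing $R$ to return the geodesic interval on antipodal pairs of $4$-cycles). Your treatment of the degenerate coincidences among $u,v,x,y$ and of the connectedness issue is fine, but the proposal is incomplete at its central step, and that step cannot be completed in the form you propose.
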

It is worth noting that the axiom (AX) can be also described purely in a
transit sets notation as follows:
\begin{description}
\item[(AX')] for all $a,b,c,d,e,f \in X$, with
  $|R(a,b)|=|R(c,d)|=|R(e,f)|=2$ and $b,c \in R(a,d)$, $a,d \in R(b,c)$,
  $d,e \in R(c,f)$ and $c,f \in R(d,e)$ it follows that $b,e \in R(a,f)$
  and $a,f \in R(b,e)$.
\end{description}

\begin{figure}[htbp]
\begin{center}
\includegraphics[width=0.45\textwidth]{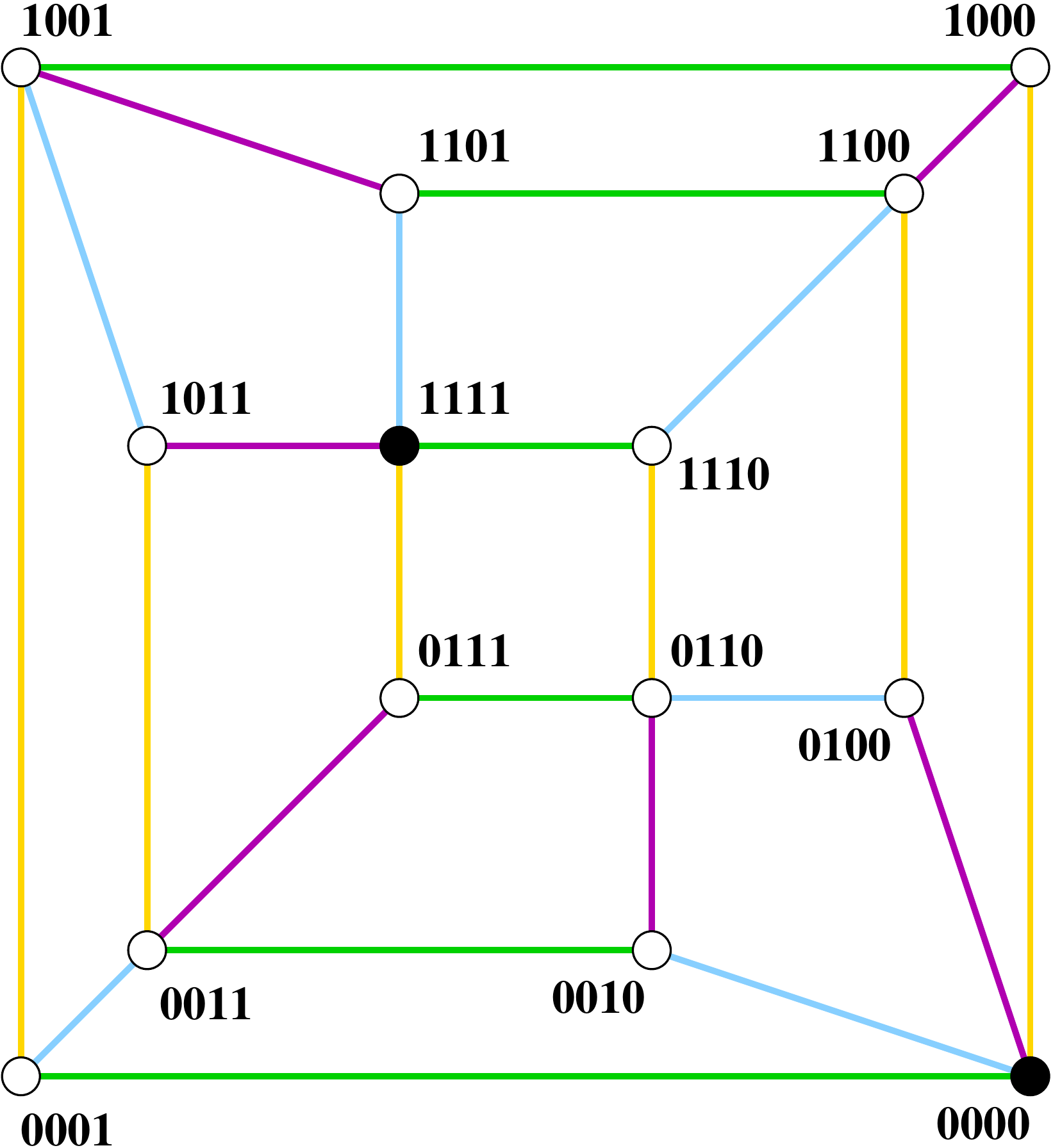}
\caption{$R_2(0000,1111)$ together with colored cuts.}
\label{fig:R24}
\end{center}
\end{figure}

For a partial cube $G$, the equivalence classes of the relation $\parallel$
are called \emph{cuts} and we denote the set of all cuts by
$\mathcal{C}=\{C_1,C_2,\ldots, C_n\}$, where $n$ is the dimension of the
smallest hypercube into which $G$ embeds isometrically.  Cuts form a
minimal edge partition of the edge set in a partial cube with the property
that removal of all edges from a given cut results in a disconnected graph
with exactly two connected components. These are called \emph{splits}
\cite{dress2012phylogenetic,ovchinnikov2011cubes}.

Cuts of the partial cubes correspond to the coordinates in the
corresponding isometric embedding into the hypercube and they induce a
binary labelling of the strings: for a cut $C_i$ vertices from one part of
the split induced by $C_i$ are labeled ``0'' in coordinate $i$, and
vertices from the other part of the split are labeled ``1'' in coordinate
$i$. For a any pair of parallel edges $xy$, $uv$ in a partial cube the
notation can be chosen such that $d(u,x) = d(v,y) = d(u,y)-1 =
d(v,x)-1$. The distance between any two vertices in a partial cube
therefore can be computed as the Hamming distance between the corresponding
binary labelings, which in turn correspond to the number of cuts that
separate the two vertices. In other words, any shortest path between two
vertices in a partial cube is determined by the cuts it
traverses. Moreover, any shortest path traverses each cut at most once. We
refer to \cite{handbookproductgraphs:2011, ovchinnikov2011cubes} for the
details; there, the cuts are called $\Theta$-classes.

Let us denote the cuts appearing in the partial cube $R_1(x,y)$ by
$C(x,y)$. We have $|C(x,y)|=d(x,y)$. For any pair of vertices $x,y$ in a
hypercube with $d(x,y)=t$ we have $t!$ possible ways to choose a shortest
path between them, because each of the $t!$ possible orders in which the
corresponding cuts that are traversed results in a distinct path. Therefore
there are also $t!$ ways to choose an isometric cycle through $x$ and
$y$. The definition of the $1$-point crossover operator, on the other hand,
identifies a unique isometric cycle between $x, y \in V(G_R)$.

The binary labelling of vertices in a partial cube naturally induces a
lexicographic ordering of vertices. Similarly, by taking first the
labelling of the minimal vertex and concatenating it with the labelling of
the remaining vertex, we can also lexicographically order the edges of a
partial cube. The idea can further be generalized to a lexicographic ordering of
all paths and cuts of a partial cube. The following result shows the
$1$-point crossover is intimately related to this lexicographic order.

\begin{theorem}
  Let $x,y \in X=\{0,1\}^n$. Then $R_1(x,y)$ consist of all vertices
  appearing on lexicographically minimal and maximal paths between $x$ and
  $y$.
\end{theorem}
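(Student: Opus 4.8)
I would set up coordinates so that $x = 0^n$ and $y = 1^n$ as usual, and relabel positions so that the cuts separating $x$ and $y$ appear in a fixed reference order $1, 2, \dots, t$ where $t = d(x,y)$ (positions at which $x$ and $y$ agree are irrelevant and can be ignored). Under this relabeling, $R_1(x,y)$ consists precisely of the strings of the form $0^i 1^{t-i}$ and $1^i 0^{t-i}$ for $0 \le i \le t$, that is, the $2t$ vertices of the isometric cycle $C_{2t}$ identified by the $1$-point crossover operator (this is the Gitchoff–Wagner description quoted just before the theorem). So the content of the theorem is: the lexicographically minimal and the lexicographically maximal $x$–$y$ paths together trace out exactly these $2t$ vertices, and nothing else.

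**Key steps.** First I would recall the setup from the paragraphs just above: a shortest $x$–$y$ path in a partial cube is determined by the order in which it traverses the $t$ cuts in $C(x,y)$, and each of the $t!$ orders gives a distinct path; the lexicographic order on vertices is induced by the binary labelling coming from the cuts. Second, I would identify the lexicographically minimal $x$–$y$ path: starting from $x = 0^t$, at each step we flip the coordinate that makes the resulting string lexicographically smallest among the still-available neighbours toward $y$. Since flipping a $0$ to a $1$ only increases a string lexicographically, the minimal path is forced to flip coordinates in the order $t, t-1, \dots, 1$ — that is, it passes through $0^t, 0^{t-1}1, 0^{t-2}11, \dots, 0\,1^{t-1}, 1^t$. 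These are exactly the "$x$ before $y$" crossover offspring $x \times_{\{i\}} y = 0^i 1^{t-i}$. Symmetrically, the lexicographically maximal $x$–$y$ path flips coordinates in the order $1, 2, \dots, t$, passing through $0^t, 1\,0^{t-1}, 11\,0^{t-2}, \dots, 1^{t-1}0, 1^t$, which are exactly the "$y$ before $x$" offspring $y \times_{\{i\}} x = 1^i 0^{t-i}$. Third, I would observe that the union of these two vertex sets is precisely the $2t$-cycle $R_1(x,y)$ described above, completing both inclusions: every vertex on the lex-min or lex-max path lies in $R_1(x,y)$, and conversely every vertex of $R_1(x,y)$ appears on one of the two paths.

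**Main obstacle.** The only real subtlety is making precise what "lexicographically minimal/maximal path" means and checking the greedy characterisation: that the globally lex-minimal path is obtained by the step-by-step greedy choice, and that this greedy choice is exactly "always flip the highest-indexed remaining differing coordinate." This requires a short monotonicity argument — if two $x$–$y$ shortest paths first differ at step $j$, the one that flips the lower-indexed coordinate at step $j$ is lexicographically larger regardless of how both continue, because the vertex reached at step $j$ already has a $1$ in a more significant position — and an induction on path length to lift this from "first differing step" to the full path ordering. One should also note explicitly that we may discard the coordinates on which $x$ and $y$ agree, since they are constant along every $x$–$y$ shortest path and hence do not affect the comparison; after that reduction the argument is the clean combinatorial statement above. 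I expect no genuine difficulty beyond phrasing this greedy/monotonicity lemma carefully.
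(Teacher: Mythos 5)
Your proposal is correct, and it actually supplies an argument where the paper gives none: the paper's own ``proof'' is the single sentence that the statement follows immediately from the definition of $1$-point crossover. Your elaboration is the right one --- after normalizing so that $x=0^t$ and $y=1^t$ on the differing coordinates, $R_1(x,y)$ is exactly $\{0^i1^{t-i}\}\cup\{1^i0^{t-i}\}$, and the greedy/monotonicity lemma (a path that first deviates by flipping a more significant coordinate is lexicographically larger from that step onward) identifies these two families with the lex-minimal and lex-maximal geodesics. One point worth stating explicitly in a write-up: the normalization $x\mapsto 0^t$ is not merely cosmetic but essential, since with the raw ambient labelling the claim can fail (e.g.\ for $x=011$, $y=100$ the lex-minimal geodesic passes through $001\notin R_1(x,y)$); the theorem must be read with respect to the cut-induced labelling in which $x$ is the all-zero (minimal) vertex, which is the convention the paper uses elsewhere and which your coordinate set-up adopts.
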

\begin{proof}
  The statement follows immediately from the definition of the 1-point
  crossover operator.
  \
\end{proof}

\begin{problem}
  Is it true that $R_k(x,y)$ consist of all vertices appearing on
  $\frac{k!}{2}$ pairs of first and last lexicographically minimal and
  maximal paths between $x$ and $y$?
\end{problem}

\begin{figure}[h!]
 \label{fig:antilexicographic}
\centering
\includegraphics[width=0.40\textwidth]{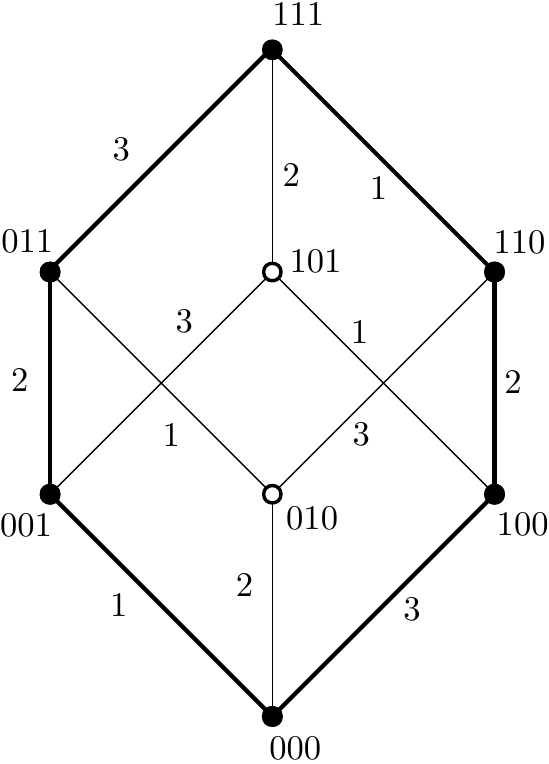}
\caption{$R_1(000,111)$ in $K_2^3$ is represented with black vertices and
  fat edges. Lexicographic ordering of the vertices of $K_2^3$: starting at
  000 bottom-up and then in each level from left to right; labelling of the
  edges corresponds to the induced lexicographic ordering of cuts.}
\end{figure}

For $x,y \in X\{0,1\}^n$ and any shortest path between them, there is
exactly one path along which the cuts appear in the reverse order. Consider
any $u\in R_1(x,y)\setminus\{x,y\}$. There is is exactly one shortest path
between $u$ and $x$ in $R_1(x,y)$.  For $k>1$ and $d(x,y)=t$ both $x$ and
$y$ have exactly $t$ neighbours in $R_k(x,y)$. Moreover as shown above, see
example \ref{fig:R24}, for $u \in R_k(x,y)$, it may be the case that
$R_k(x,u)=\widehat{R_k}(x,u) \subseteq R_k(x,y)$. Hence the lexicograpic
order of cuts does not uniquely determine a shortest path in
$R_k(u,x)$. The structure of $R_k(u,v)$ hence is much richer and calls for
more ``dimensions''. We explore this structure in more details in the
sections \ref{sect:topological} below.

\begin{problem}
  Compute the size of cuts for $R_k$, i.e., the number of edges belonging
  to the common cut.
\end{problem}

The degree sequence of the graphs induced by $1$-point crossover and
uniform crossover operators are monotone. In the first case all values are
equal to $2$, and in the second case they equal the length of the string
$n$.

\begin{problem}
\label{pro:degreeseq}
Let $d(a,b)=t > k+1 > 2$. Determine degree sequences of the graphs induced
by $R_k(a,b)$.
\end{problem}
We will solve this problem completely for the special case of 2-point
crossover operators in Section \ref{sect:topological}.

\begin{lemma}
  \label{lem:minmax}
  Let $a,b \in X=\{0,1\}^n$ and $k>1$. Then the maximum and minimum degree
  of a graph induced by $R_k(a,b)$ equal $n$ and $k+1$, respectively.
\end{lemma}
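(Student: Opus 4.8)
The claim has two halves: the maximum degree is exactly $n$, and the minimum degree is exactly $k+1$. I would treat them separately, using the coordinatization (w.l.o.g. $a=00\dots0$, $b=11\dots1$, and the fact $R_k(a,b)$ is a partial cube with cut set $C(a,b)$ of size $n$, by Corollary~\ref{cor:partialcube1} and Lemma~\ref{lem:fullinterval} applied to the assumption $t=d(a,b)>k+1$). The neighbours of a vertex $u\in R_k(a,b)$ in the induced graph are exactly those $u'$ obtained by flipping a single coordinate $i$ of $u$ such that $u'$ is still a $k$-point crossover offspring of $a$ and $b$. So the whole problem reduces to: given $u=a\times_I b$ (equivalently, a binary string viewed as a union of at most $k$ maximal blocks of $1$'s separated by blocks of $0$'s — I should count \emph{maximal runs} of the minority symbol), how many single bit-flips keep the number of ``breakpoints'' at most $k$?

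\textbf{Maximum degree $=n$.} For the upper bound, every vertex of $R_k(a,b)\subseteq K_2^n$ has degree at most $n$ in $K_2^n$, hence at most $n$ in the induced subgraph. For attainment, I exhibit a vertex adjacent to all $n$ of its hypercube-neighbours. The natural candidate is a vertex that is itself an endpoint of a ``long'' interval: since $t>k+1$, pick $u$ to be a string of the form $0^{a}1^{b}0^{c}\cdots$ using all $k$ available breakpoints but arranged so that flipping \emph{any} coordinate still uses at most $k$ breakpoints. Concretely, if $u$ uses strictly fewer than $k$ breakpoints, any single flip adds at most two new breakpoints and we may not stay under $k$; instead take $u$ with exactly $\lfloor k/2\rfloor$ runs of $1$ each consisting of a \emph{single} coordinate that is far from the boundary and far from each other — flipping that $1$ to $0$ removes a run (fewer breakpoints), flipping a neighbouring $0$ to $1$ merges or extends. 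The cleanest choice: let $u$ have coordinates equal to $1$ exactly on $\lceil k/2 \rceil$ isolated positions, chosen so that between consecutive chosen positions and at both ends there is at least one $0$; one checks flipping \emph{any} bit changes the run-count by at most... — here I need $k+1 < t=n$ actually, wait, $t=d(a,b)=n$ since $b=\bar a$, and the hypothesis is $n>k+1$, giving enough room. I would verify: flipping a $0\to1$ adjacent to an existing isolated $1$ extends that run (breakpoint count unchanged or $-$); flipping a $0\to1$ with $0$'s on both sides creates a new run but since we used only $\lceil k/2\rceil < k$ runs there is slack of at least one run, i.e.\ at most $2\lceil k/2\rceil+... \le k$ breakpoints still. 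The bookkeeping "number of breakpoints of $w$" $= 2\cdot(\text{number of maximal }1\text{-runs}) - (\text{correction if a run touches an endpoint})$ is the crux and I must get the off-by-one right.

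\textbf{Minimum degree $=k+1$.} Lower bound: I claim \emph{every} $u\in R_k(a,b)$ has at least $k+1$ neighbours in $R_k(a,b)$. Write $u$ as a concatenation of $m\le k$ maximal alternating blocks. For each of the (at most $k$) internal breakpoints, flipping the single coordinate just inside a block \emph{at} a breakpoint shifts that breakpoint by one and keeps the offspring a $k$-point crossover (possibly fewer breakpoints). Together with flipping the coordinate at each of the two ends one gets $\ge k+1$ distinct valid flips; I need to argue they are distinct and all land in $R_k(a,b)$ — this is the main obstacle, since when a block has length $1$ a shift can \emph{annihilate} it, and two shifts toward the same short block can coincide, so the counting must be done carefully, probably by induction on $m$ or via Theorem~\ref{thm:recurse}. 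Upper bound (i.e.\ \emph{some} vertex has degree exactly $k+1$): take $u=0^{r}1^{s}$ with a single breakpoint and $r,s\ge 2$ — but that has degree $2$, not $k+1$, so instead I want the \emph{minimum over all vertices} to be $k+1$, meaning I must also show the bound $k+1$ is tight, i.e.\ exhibit $u$ achieving it: a string $u = (01)^{\lceil (k+1)/2\rceil}0\cdots0$ realizing all $k$ breakpoints with every block of length $1$, so that \emph{every} internal-breakpoint shift annihilates a block (reducing breakpoints, still valid) and there are no "extra" valid flips beyond the $k+1$ boundary/shift moves — here flipping a $0$ in the long tail $0\cdots0$ into a $1$ would create a new run, exceeding $k$, so it is forbidden, pinning the degree at exactly $k+1$. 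The delicate part throughout is the precise relation between "breakpoint count $\le k$" and "$\le k$ maximal runs", including the endpoint corrections; once that lemma is nailed down, both bounds are short run-length arguments.

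I expect the \textbf{main obstacle} to be the lower bound "$\delta \ge k+1$ for every vertex": one must produce $k+1$ \emph{distinct} neighbours uniformly, handling degenerate blocks of length one where naive shift moves collide or vanish, and I anticipate the cleanest route is to induct using the recursion $R_k(u,v)=\bigcup_{z\in R_{k-1}(u,v)}[R_1(u,z)\cup R_1(z,v)]$ of Theorem~\ref{thm:recurse}, passing from $k-1$ to $k$ by adding one more breakpoint and checking it contributes a fresh neighbour.
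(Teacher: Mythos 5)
Your overall strategy---coordinatize so that $a=0\cdots0$, $b=1\cdots1$, and classify single-bit flips by how they change the run structure---is exactly the paper's, but the execution has real gaps, all traceable to the off-by-one you flag but never resolve. The needed characterization is: $z\in R_k(a,b)$ iff $z$ has at most $k$ cuts, i.e.\ at most $k+1$ maximal runs. With that in hand, your max-degree witness fails: a string with $\lceil k/2\rceil$ isolated $1$'s separated and flanked by $0$'s has $2\lceil k/2\rceil+1\ge k+1$ runs, so it sits \emph{at} the cap, and flipping a $0$ that has $0$'s on both sides adds two runs and leaves $R_k(a,b)$; there is no ``slack of at least one run''. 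The correct witness is simply $a$ itself (the paper's choice): every Hamming neighbour of $a$ has at most $3$ runs, hence lies in $R_2(a,b)\subseteq R_k(a,b)$ for $k\ge2$, so $\deg(a)=n$, and $n$ is trivially an upper bound.

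For the minimum, your count ``one valid flip per internal breakpoint plus the two end flips'' is wrong in both directions. At a vertex with the maximal number $k+1$ of runs, flipping the first (or last) bit of the string when the terminal run has length $\ge2$ \emph{creates} a run, giving $k+2$ runs, so the two end flips are generally invalid; conversely an internal boundary between two runs each of length $\ge2$ contributes \emph{two} valid flips (one on each side), not one. The clean argument classifies a flip at position $i$ by: (i) $i$ is a length-$1$ run (run count drops by $1$ or $2$, always valid); (ii) $i$ is an inner-facing end of a run of length $\ge2$ (count unchanged, valid); (iii) $i$ is an outer end of the string in a run of length $\ge2$ (count $+1$); (iv) $i$ is interior to a run of length $\ge3$ (count $+2$). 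Every one of the $m$ runs contributes at least one flip of type (i) or (ii), and these lie in distinct runs, so $\deg(u)\ge m$; if $m\le k$ the type-(iii) flips are also valid and the degree is strictly larger, so the minimum occurs at $m=k+1$ and equals $k+1$ precisely for strings in which all non-terminal runs have length $1$ and any run of length $\ge2$ is terminal (e.g.\ $k$ alternating single bits followed by one long run). Your proposed witness $(01)^{\lceil(k+1)/2\rceil}0\cdots0$ has $k+2$ runs when $k$ is odd, so it is not even an element of $R_k(a,b)$. No induction on Theorem~\ref{thm:recurse} is needed once the flip classification is stated correctly.
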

\begin{proof}
  Clearly the graph induced by $R_2(a,b)$ includes all neighbours of $x$
  and $y$ in $\{0,1\}^n$, hence the maximum degree of a graph induced by
  $R_2(a,b)$, and consequently of graphs induced by $R_k(a,b)$, for $k>2$,
  is $n$.  W.l.o.g., let $a=0\ldots0$ and $b=1\ldots1$. Then $R_k(a,b)$
  consist of all binary strings with less than $k+1$ blocks of consecutive
  0's or 1's. Hence the minimum degree in a graph induced by $R_k(a,b)$ is
  attained by vertex corresponding to a binary string consisting of exactly
  $k$ different blocks of consecutive 0's or 1's, and they have exactly
  $k+1$ neighbours in a graph induced by $R_k(a,b)$.
  \
\end{proof}
A solution of Problem \ref{pro:degreeseq} could help solve 
\begin{problem}
  Does $R_k$ induce a $k$-connected graph?
\end{problem}

For an axiomatic characterization of $R_1$ in terms of transit functions
axioms it is easy to translate graph theoretic properties related to the
fact that $R_1(x,y)$ induces an isometric cycle in $\{0,1\}^n$ in the
language of transit functions. In addition, however, it would also be
necessary to express a consistent ordering of the cuts that appear in the
isometric cycles in terms of transit function. While this appears possible,
it seems to be cumbersome and does not promise additional insights into the
structure of the transit sets. Hence we do not pursue this issue further.

\section{Combinatorial Properties of Recombination Sets} 

Since the monotonicity axiom (M) fails for $k$-point crossover with
  $k<n-1$, \cite{Gitchoff:96} proposed the axiom
\begin{description}
\item{(GW3)} For all $x,y\in X$ and all $u,v\in R(x,y)$ holds
   $|R(u,v)|\le |R(x,y)|$.
\end{description}
stipulating monotonicity in size. This is proper relaxation of (M), which
obviously (M) implies (GW3). In order to derive explicit expressions for
$|R_k(x,y)|$ we note that, for given vertices $x$ and $y$, the hypercube
can be relabeled in such a way that $x$ becomes the all-zero string and $y$
is a 01-string with 1's at exactly the positions where $x$ and $y$ differ.
Thus the size of the recombination sets $|R_k(x,y)| =: r_k(t)$ depends only
on the order $k$ of the recombination operator and the Hamming distance
$t:=d(x,y)$. In the following we write 
\begin{equation} 
  \Phi_h(n):= \sum_{i=0}^h \binom{n}{i}\,. 
\end{equation}
In order to compute $r_k(t)$, we have to distinguish the case of small and
large Hamming distances.
  
\begin{theorem}
\label{thm:size}
Let $1\leq k < t$. Then 
\begin{equation}
  r_k(t) = 
  \begin{cases} 
    2^t           & \textrm{if}\quad t \le k \\  
    2 \Phi_k(t-1) & \textrm{if}\quad t > k \\
  \end{cases} 
\end{equation}
\end{theorem}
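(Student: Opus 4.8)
The plan is to reduce the computation of $r_k(t)=|R_k(x,y)|$ to an elementary enumeration of binary strings with a bounded number of blocks. First I would normalise the situation: using the coordinatisation of the underlying Hamming graph I may assume $x=0\cdots0$ and $y=1\cdots1$. Every $z\in R_k(x,y)$ satisfies $z_j\in\{x_j,y_j\}$, so $z$ is forced on the coordinates at which $x$ and $y$ agree; discarding those coordinates I may take $n=t$, i.e.\ $x$ and $y$ antipodal in $\{0,1\}^t$, without changing $R_k(x,y)$ up to the obvious bijection. Since $r_k(t)$ depends only on $k$ and $t=d(x,y)$, this costs nothing.

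The key step is the following description of $R_k(x,y)$ in this normalised situation: $z\in R_k(x,y)$ if and only if the binary string $z$ has at most $k+1$ maximal blocks of consecutive equal entries. Indeed, by Definition~\ref{def:xover} a $k$-point crossover offspring of $0\cdots0$ and $1\cdots1$ is obtained by cutting $\{1,\dots,t\}$ into at most $k+1$ consecutive intervals and giving each interval the constant value $0$ (from $x$) or $1$ (from $y$); the resulting string therefore has at most $k+1$ blocks. Conversely, a string with $m\le k+1$ blocks is realised by taking its $m$ blocks as the intervals (padding with empty intervals when $m<k+1$). This is consistent with Lemma~\ref{lem:fullinterval}: when $t\le k+1$ every length-$t$ string has at most $t\le k+1$ blocks, so $R_k(x,y)=\{0,1\}^t=I_{G_R}(x,y)$, of size $2^t$.

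It then remains to count. The number of binary strings of length $t$ with exactly $j$ blocks is $2\binom{t-1}{j-1}$: the $j-1$ block boundaries are chosen among the $t-1$ gaps between consecutive positions, and the value of the first block is chosen in two ways, which determines the string. Summing over $1\le j\le k+1$ and using $\binom{t-1}{i}=0$ for $i\ge t$,
\[
  r_k(t)=\sum_{j=1}^{k+1}2\binom{t-1}{j-1}=2\sum_{i=0}^{k}\binom{t-1}{i}=2\,\Phi_k(t-1).
\]
Finally, if $t\le k$ then $\binom{t-1}{i}=0$ for all $i\ge t$, so $\Phi_k(t-1)=\sum_{i=0}^{t-1}\binom{t-1}{i}=2^{t-1}$ and hence $r_k(t)=2^t$; the same value $2\Phi_k(k)=2^{k+1}$ is obtained for $t=k+1$. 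The only place that demands care is the block-characterisation of $R_k(x,y)$ — in particular the bookkeeping that at most $k$ breakpoints produce at most $k+1$ monochromatic segments, together with the observation that coordinates on which $x$ and $y$ agree may be ignored — after which the enumeration is completely routine, and I do not expect any genuine obstacle.
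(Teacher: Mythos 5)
Your proof is correct and follows essentially the same route as the paper's: both reduce to the $t$ coordinates where $x$ and $y$ differ and count $2\binom{t-1}{i}$ offspring for each number $i\le k$ of effective cuts placed in the $t-1$ gaps, with the factor $2$ for which parent leads. Your reformulation in terms of strings with at most $k+1$ maximal blocks is just a cleaner packaging of the same count, with the advantage that it makes the injectivity of the (cuts, parent order) $\mapsto$ offspring correspondence explicit, which the paper's terser argument leaves implicit.
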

\begin{proof}
  Consider two strings $x$ and $y$.  From \cite{Gitchoff:96} we know that
  $|R_1(x,y)|=2t$. For all children of $x$ and $y$ that are obtained by
  $i$-point crossover, $1 \leq i \leq k $ with exactly $i$ cuts, we have
  $i$ possibilities for choosing the cuts along $t-1$ positions.  This
  amounts a total of $\binom{t-1}{i}$ possibilities. In 2 different choices
  for the ordering of parents. If $t\le k$ cuts may be placed
  simultaneously between any two positions in which $x$ and $y$ differ,
  i.e., $i$ takes values from $0$ to $t-1$. Thus $r_k(t)=2 \sum_{i=0}^{t-1}
  \binom{t-1}{i} = 2\Phi_{t-1}(t-1)=2\cdot 2^{t-1} =2^t$. For $t>k$ the
  number of possible cuts is limited by $k$ and hence $r_k(t)=2 \Phi_k(t-1)$.
  \
\end{proof}
Parts of this result were already observed by \cite{Gitchoff:96}. In
particular, $r_1(t)=2t$ for $t>1$, $r_2(t) = t^2-t+2$ for $t>2$, 
$r_k(k+1)=2^{k+1}$, and $r_k(k+2)=2^{k+1}-2$. 

The latter equation shows that $R_{k-1}(x,y)$ for $d(x,y)=k+1$ misses
exactly two points compared to $R_{k}(x,y)$, i.e., $R_{k}(x,y)\setminus
R_{k-1}(x,y)=\{a,b\}$. Thus we can conclude immediately that $a=x
\times_{I} y$, $b=y \times_{I} x$ with $|I|=k$.  Since every $x,y$, $x\neq
y$, has a unique $a,b$ with the above property, we obtain another simpler
proof of the Theorem \ref{uniquetransitsets}.

From $r_k(t)=2^t$ for $t\le k+1$ and the fact that $\widehat{R_k}(x,y)$ is
a hypercube $K_2^t$ of dimension $t$ for $d(x,y)=t$ we immediately conclude
that $R_k(x,y)$ is also a hypercube for $t\le k+1$.

Let $G$ be partial cube and let $H$ be a graph obtained by contracting some
of the cuts of $G$, i.e.\ by forgetting some of the coordinates in binary
labelling of vertices. If $H$ is isomorphic to some hypercube, then we say
that $H$ is a \emph{cube minor} of $G$.

\begin{lemma} 
  Consider $R_k(x,y)$ as an induced subgraph of the boolean hypercube
  $K_2^n$, and suppose $d(x,y)\ge k+1$. Then the largest cube minors of
  $R_k(x,y)$ are isomorphic to $K_2^{k+1}$.
\end{lemma}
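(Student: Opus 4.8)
The plan is to pass to the coordinatized description of $R_k(x,y)$ and translate ``cube minor of dimension $j$'' into ``$j$ coordinates are shattered'', so that the lemma reduces to a short count of blocks. First I would normalise the embedding of $R_k(x,y)$ into $K_2^n$ so that $x=00\cdots0$ and $y$ is the indicator of the set $D$ of the $t:=d(x,y)$ coordinates in which $x$ and $y$ differ. Every vertex of $R_k(x,y)$ agrees on each coordinate outside $D$, so those are not $\Theta$-classes of the partial cube $R_k(x,y)$ (Corollary~\ref{cor:partialcube1}); hence $R_k(x,y)$ has exactly the $t$ cuts indexed by $D$, and every cube minor arises by forgetting the coordinates outside some $P\subseteq D$. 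After identifying $D$ with $\{1,\dots,t\}$ in the order of the positions, $R_k(x,y)$ is exactly the set of $z\in\{0,1\}^t$ having at most $k+1$ maximal blocks of consecutive equal letters --- the description already used in the proofs of Theorem~\ref{thm:size} and Lemma~\ref{lem:minmax} (at most $k$ crossover cuts create at most $k+1$ segments, and conversely the at most $k$ block boundaries of such a $z$ serve as admissible cuts). Since a hypercube $K_2^{j}$ has exactly $j$ $\Theta$-classes, a cube minor of dimension $j$ forgets exactly $t-j$ cuts, so the dimension of a cube minor depends only on which $P\subseteq D$ survives.

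Next I would record that forgetting the coordinates outside $P=\{p_1<\cdots<p_j\}$ turns $R_k(x,y)$ into the partial cube $\pi_P(R_k(x,y))\subseteq\{0,1\}^P$ --- contracting $\Theta$-classes of a partial cube again yields a partial cube, carrying the obvious forgotten-coordinate labelling; see \cite{handbookproductgraphs:2011, ovchinnikov2011cubes} --- and that this minor is isomorphic to $K_2^{j}$ if and only if $\pi_P$ is onto, i.e.\ if and only if $P$ is \emph{shattered} by $R_k(x,y)$ (if $\pi_P$ misses a pattern the minor has fewer than $2^{j}$ vertices, while if it is onto then a partial cube with full vertex set $\{0,1\}^P$ must be $K_2^{j}$). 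In other words the dimension of the largest cube minor of $R_k(x,y)$ equals the Vapnik--Chervonenkis dimension of $R_k(x,y)$ regarded as a set system on $\{1,\dots,t\}$, and it remains to show this number equals $k+1$.

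Both inequalities then come straight from the block description. For the lower bound, since $t\ge k+1$ pick any $P=\{p_1<\cdots<p_{k+1}\}\subseteq\{1,\dots,t\}$; given $w\colon P\to\{0,1\}$, let $z$ be the string whose value at position $q$ is $w_{p_\ell}$, where $p_\ell$ is the largest element of $P$ with $p_\ell\le q$ (and $w_{p_1}$ if $q<p_1$). Then $z$ changes value only at points of $P$, hence has at most $k+1$ blocks, lies in $R_k(x,y)$, and realises $w$; so $P$ is shattered and $K_2^{k+1}$ occurs as a cube minor. For the upper bound, if $|P|\ge k+2$ take $w$ alternating along $p_1<\cdots<p_j$; any $z$ realising $w$ must change value somewhere strictly between each consecutive pair $p_\ell,p_{\ell+1}$, forcing at least $j-1\ge k+1$ distinct block boundaries and hence at least $k+2$ blocks, so $z\notin R_k(x,y)$ and $P$ is not shattered. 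Therefore no cube minor has dimension $\ge k+2$, and the maximal cube minors of $R_k(x,y)$ are precisely those isomorphic to $K_2^{k+1}$.

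The only step that is not an elementary count is the claim that contracting the $\Theta$-classes outside $P$ really reproduces $\pi_P(R_k(x,y))$ as an isometric subgraph of $K_2^{P}$; I expect this --- a standard fact about partial cubes --- to be the one place where a little care is needed, everything else being direct manipulation of blocks. It is also worth remarking that the hypothesis $d(x,y)\ge k+1$ is exactly what makes the lower-bound construction possible: when $d(x,y)=t<k+1$ one has $R_k(x,y)=I_{G_R}(x,y)\cong K_2^{t}$ by Lemma~\ref{lem:fullinterval}, and then the largest cube minor is $K_2^{t}$ rather than $K_2^{k+1}$.
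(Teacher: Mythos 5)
Your proof is correct, but it takes a genuinely different route from the paper's. The paper disposes of this lemma in one line as an ``immediate consequence'' of Lemma~\ref{lem:fullinterval} and Theorem~\ref{thm:recurse}: the lower bound is meant to come from contracting all but $k+1$ of the differing coordinates, where the image contains a transit set of two strings at distance $k+1$ and is therefore a full $(k{+}1)$-cube by Lemma~\ref{lem:fullinterval}; the upper bound is left implicit in the recursion of Theorem~\ref{thm:recurse}. You instead normalise to $x=0\cdots 0$, describe $R_k(x,y)$ as the strings with at most $k+1$ maximal blocks, identify cube minors with shattered coordinate sets, and then prove both bounds by explicit block counts (the step-function extension for the lower bound, the alternating pattern forcing $\ge k+2$ blocks for the upper bound). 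This is more work but also more complete: it supplies the upper-bound argument that the paper never spells out, and it simultaneously establishes the identity ``largest cube minor $=$ VC-dimension'' that the paper asserts separately and then reuses in Theorem~\ref{prop:vc-x-over}. The one step you rightly flag --- that contracting $\Theta$-classes of a partial cube yields a partial cube, so that a surjective projection onto $\{0,1\}^P$ forces the minor to be all of $K_2^{|P|}$ --- is a standard fact and is used implicitly by the paper as well. Your closing remark on the role of the hypothesis $d(x,y)\ge k+1$ is also accurate and clarifies why the statement is phrased as it is.
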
 
\begin{proof}
  This is an immediate consequence of Lemma \ref{lem:fullinterval} and 
  Theorem \ref{thm:recurse}.
  \
\end{proof} 

The Vapnik-Chervonenkis dimension (or VC-dimension) measures the
complexity of set systems. Originally introduced in learning theory by
\cite{vapnik1971dimension}, it has found numerous applications e.g.\ in
statistics, combinatorics and computational geometry, see monograph edited
by \cite{vovk2015measures}. Consider a base set $X$ and family
$\mathcal{H}\subseteq 2^X$. A set $C\subseteq X$ is \emph{shattered by
  $\mathcal{H}$} if $\{Y\cap C| Y\in\mathcal{H}\}=2^C$. The $VC$-dimension
of $\mathcal{H}$ is the largest integer $d_{VC}$ such that there is a set
$C$ of cardinality $d_{VC}$ that is shattered by $\mathcal{H}$. For
$\mathcal{H}=\emptyset$, $d_{VC}= - 1$ by
definition.

Clearly, $X$ is shattered by $\mathcal{H}=2^X$, hence the VC-dimension of
the Boolean hypercube $\{0,1\}^d$ is $d$. Now consider an even cycle
$C_{2t}$ of length $2t$, isometrically embedded into $t$ dimensional
hypercube. It is not hard to check that the VC-dimension of $C_{2t}$ is $2$
for any $t\ge 2$. More generally, the VC-dimension of a partial cube $G$,
with $d$ cuts, equals the dimension of the largest cube-minor in $G$,
because this is the largest cardinality of a set of coordinates that can be
shattered by the set of all $d$ of cuts of $G$.

\begin{theorem}
\label{prop:vc-x-over}
The VC-dimension of $R_k(x,y)$ equals $k+1$ whenever $d(x,y)>k$. Otherwise 
the VC-dimension of $R_k(x,y)$ equals $d(x,y)$. 
\end{theorem}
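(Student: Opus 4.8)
The plan is to reduce the statement to the fact already recorded in the excerpt that the VC-dimension of a partial cube $G$ equals the dimension of its largest cube minor. By Corollary~\ref{cor:partialcube1}, $R_k(x,y)$, viewed as an induced subgraph of the Hamming graph $G_R=K_2^n$, is a partial cube, so this characterization applies directly. It then remains only to identify the largest cube minor of $R_k(x,y)$ in the two regimes.

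First I would dispose of the case $d(x,y)\le k$. Here Lemma~\ref{lem:fullinterval} gives $R_k(x,y)=I_{G_R}(x,y)$, which is a subhypercube of dimension $d(x,y)$; its largest cube minor is itself, of dimension $d(x,y)$, so the VC-dimension is $d(x,y)$, as claimed. (One may note the borderline value $d(x,y)=k+1$ is consistent with both formulas, since $r_k(k+1)=2^{k+1}$ forces $R_k(x,y)$ to be a full hypercube of dimension $k+1$.)

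For the main case $d(x,y)>k$, I would invoke the Lemma immediately preceding the statement: the largest cube minors of $R_k(x,y)$ are isomorphic to $K_2^{k+1}$. Combined with the partial-cube VC-dimension formula, this yields $d_{VC}(R_k(x,y))=k+1$. So the entire content is already assembled from Corollary~\ref{cor:partialcube1}, Lemma~\ref{lem:fullinterval}, and the cube-minor lemma, together with the observation (made in the paragraph before the theorem) that VC-dimension of a partial cube is the maximum cardinality of a coordinate set shattered by its cuts, i.e.\ its largest cube-minor dimension.

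The only step that requires a word of care — and the place I would expect a referee to push back — is the justification that cube-minor dimension really equals VC-dimension for a partial cube, since that is asserted rather than proved in the excerpt; if one wants to be self-contained one argues that contracting the complementary cuts realizes $\{Y\cap C\mid Y\in\mathcal{H}\}=2^{C}$ for a coordinate set $C$ indexing a cube minor, and conversely a shattered coordinate set of size $m$ forces those $m$ cuts to be mutually "crossing", which is exactly the condition producing a $K_2^m$ cube minor. Everything else is bookkeeping with the already-established lemmas, so I would keep the written proof to essentially two sentences citing Lemma~\ref{lem:fullinterval} and the cube-minor lemma.
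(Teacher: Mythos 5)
Your proposal is correct and follows essentially the same route as the paper: both cases reduce to the observation (stated just before the theorem) that the VC-dimension of a partial cube equals the dimension of its largest cube minor, with the case $d(x,y)\le k$ handled by $R_k(x,y)$ being a full subhypercube of dimension $d(x,y)$ and the case $d(x,y)>k$ handled by the preceding lemma identifying the largest cube minors as $K_2^{k+1}$. Your added remark that the cube-minor/VC-dimension equivalence is asserted rather than proved in the text is a fair observation, but it does not change the fact that your argument matches the paper's.
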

\begin{proof}
  If $d(x,y) \leq k$ then $R_k(x,y)$ induces graph isomorphic to
  $d$-dimensional hypercube, where $d=d(x,y)$.  Let $d=d(x,y)$. If $d>k$
  then we need to contract $d-k-1$ cuts (ignore the corresponding
  coordinates) to obtain a cube minor of dimension $k+1$.
  \
\end{proof} 

\section{Topological representation of the $k$-point crossover operators}
\label{sect:topological}

Oriented matroids \cite{OMbook1999} are an axiomatic combinatorial
abstraction of geometric and topological structures such as vector
configurations, (pseudo)hyperplane arrangements, convex polytopes, point
configurations in the Euclidean space, directed graphs, linear programs,
etc. They reflect properties such as linear dependencies, facial
relationship, convexity, duality, and have bearing on solutions of
associated optimization problems. Beyond their connection with many areas
of mathematics, the theory of oriented matroids has in recent years found
applications in diverse areas of science and technology, including
metabolic network analysis \cite{gagneur2004computation, muller2013fast,
  muller2014enzyme, reimers2014metabolic}, electronic circuits
\cite{chaiken1996oriented}, geographic information science
\cite{stell2007oriented}, and quantum gravity
\cite{brunnemann2010oriented}. 

In order to explore the relationships of $k$-point crossover operators and
oriented matroids it will be convenient to change the coordinates of the
vertices of the hypercube $K_n^2$. For the remainder of this section we
write $+$ instead of $1$ and $-$ instead of $0$ to conform with the
traditional notation in this field. Among several equivalent
axiomatizations of oriented matroids, the face or covector axioms best
captures the geometric flavour and thus is the most convenient one for our
purposes.

Let $E$ be a finite set. A \emph{signed vector} $X$ on $E$ is a 
vector $(X_e: e\in E)$ with coordinates $X_e \in \{ +, 0, - \}$. 
The support of a sign vector $X$ is the set 
$\underline{X}=\{e \in E \,|\, X_e \neq 0\}$. The
\emph{composition} $X \circ Y$ of two signed vectors $X$ and $Y$ is the
signed vector on $E$ defined by $(X \circ Y)_e = X_e$, if $X_e \neq 0$,
and $(X \circ Y)_e = Y_e$ otherwise, and their difference set is  $D(X, Y) = \{e \in E \,|\, X_e  = - Y_e \}$. We denote by $\leq$ the product
(partial) ordering on $\{-,0,+\}^E $ relative to the standard ordering 
$- < 0 < +$ of signs. 

An \emph{oriented matroid} $M$ is ordered pair $(E,\mathcal{F})$
of a finite set $E$ and a set of \emph{covectors} $\mathcal{F} \subseteq
\{+,-,0\}^{E}$ satisfying the following (face or covector) axioms: 
\begin{description}
\item[(F0)] $0=(0,0,\dots,0) \in \mathcal{F}$.
\item[(F1)] If $X \in \mathcal{F}$, then $-X \in \mathcal{F}$.
\item[(F2)] If $X, Y \in F$, then $X \circ Y \in \mathcal{F}$.
\item[(F3)] If $X, Y \in \mathcal{F}$ and  $e \in D(X,Y)$, 
      then there exists $Z \in \mathcal{F}$ such that $Z_e = 0$ and
      $Z_f=(X \circ Y)_f$ for all $f \in E \setminus D(X, Y)$.
\end{description}

A simple example for an oriented matroid are the sign vectors of a
vector subspace. More precisely, consider a subspace $V\subseteq
\mathbb{R}^{|E|}$, define, for every $v\in V$, its sign vector $s(v)$
coordinate-wise by $s_e(v)=\sgn(v_e)$ for all $e\in E$, and denote by
$\mathcal{F}$ the set of all sign vectors of $V$. Then $M=(E, \mathcal{F})$
satisfies the axioms (F0)-(F3). Oriented matroids obtained from a vector
space in this manner are called representable or linear.

The set $\mathcal{C} \subset \mathcal{F}$ of non-zero covectors that are
minimal with respect to the partial order $\leq$ are called cocircuits or
vertices of $M$. The set $\mathcal{T} \subset \mathcal{F}$ of covectors
that are maximal with respect to $\leq$ are called topes of $M$.  The set
of cocircuits determines the set of covectors: every covector $X \in
\mathcal{F} \setminus \{0\}$ has a representation of the form $X= V_1\circ
V_2 \circ \ldots \circ V_k$, where $V_1, V_2, \ldots V_k$ are cocircuits,
and $V_1, V_2,\ldots V_k \leq X$. Similarly, the topes determine the
oriented matroid: $X$ is a covector if and only if its composition with any
tope is again a tope, i.e., $\mathcal{F}= \{ X \in \{+,-,0\}^E \mid \forall
T \in \mathcal{T}: X \circ T \in \mathcal{T} \}$.

$M=(E,\mathcal{F})$ is called \emph{uniform} of rank $r$ if the support
sets of cocircuits are exactly the subsets of $E$ with $r+1$ elements.
The big face lattice $\widehat{\mathcal{F}}$ is a lattice obtained by
adding the unique maximal element $\widehat{1}$ to the partial order $\leq$
on $\mathcal{F}$. The rank of a covector $X$ is defined as its height in
$\widehat{\mathcal{F}}$ and rank of oriented matroid is the maximal rank of
its covectors.

\begin{figure}
\begin{center}
\includegraphics[width=0.6\textwidth]{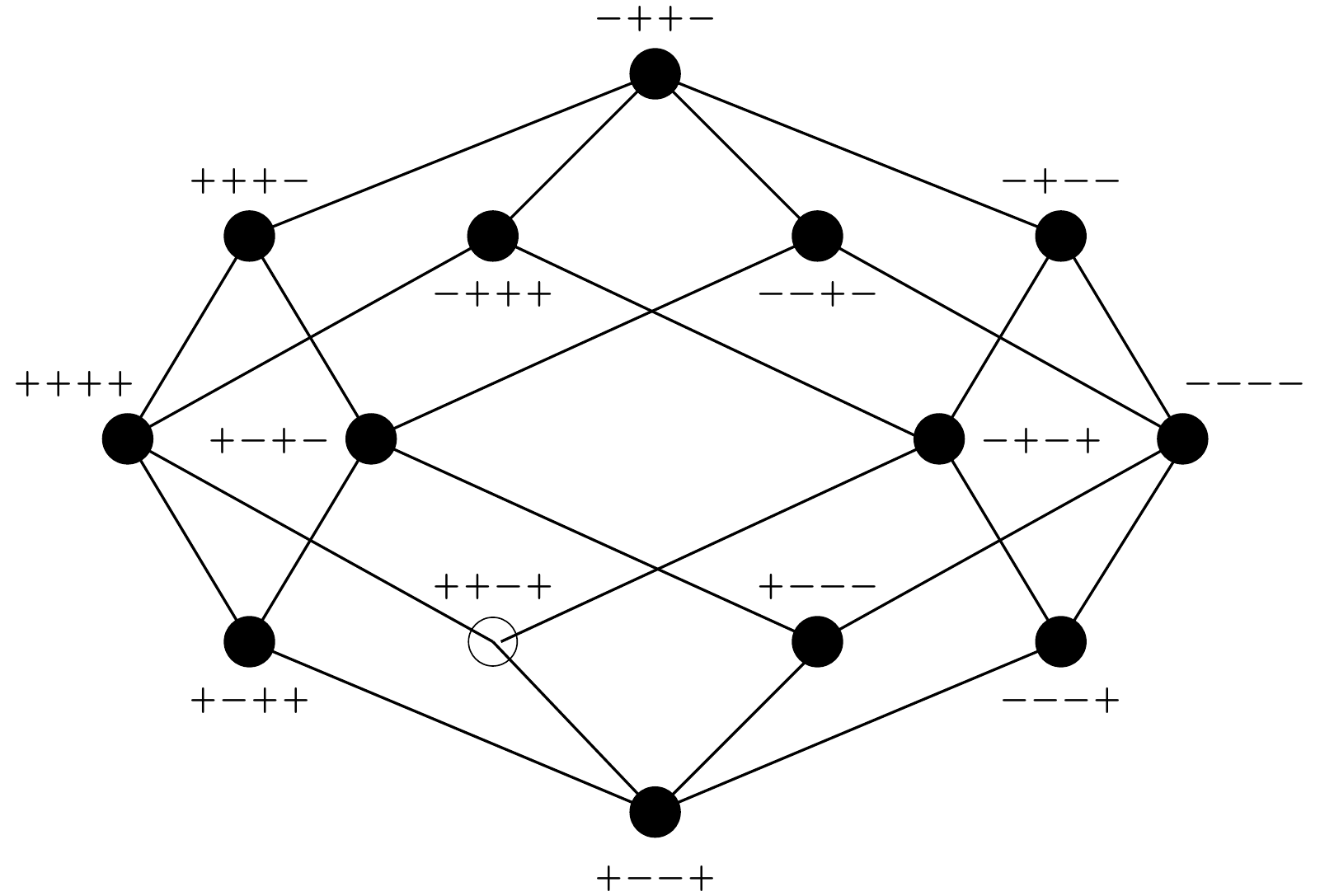}
\end{center}
\newcommand{\M}{\texttt{-}}
\renewcommand{\P}{\texttt{+}}
\caption{The rhombododecahedral graph $R_2(\M\M\M\M,\P\P\P\P)$ with
  the binary labeling corresponding to the isometric embedding into
  4-dimensional hypercube.}
\label{fig:rhombododecahedralgraph}
\end{figure} 

As an example consider $R_2(x,y)$ with $d(x,y)=5$. It can be verified that the
elements of $R_2(\verb|-----|,\verb|+++++|)$ are exactly the topes of the
oriented matroid corresponding to the Rhombododecahedron in Fig.\
\ref{fig:rhombododecahedralgraph}.  Its big face lattice is shown in Figure
\ref{fig:facelattice}.

\begin{figure}
\begin{center}
\includegraphics[width=\textwidth]{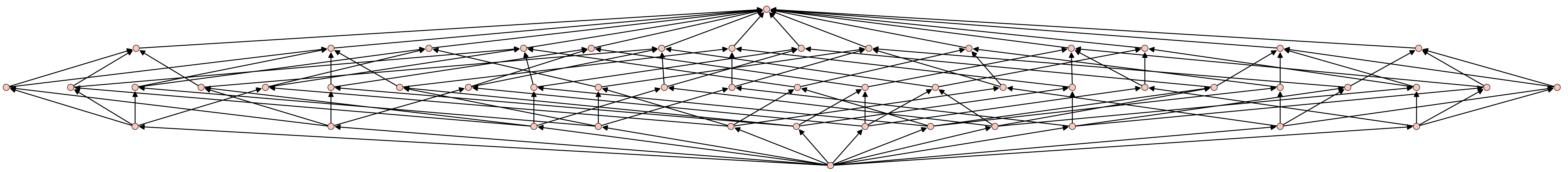}
\end{center}
\caption{Big face lattice of the rhombicdodecahedron generated using
  SageMath.}
\label{fig:facelattice}
\end{figure}

This observation can be generalized with the help of the following 
\begin{proposition}[\cite{gartner1994VCpseudo}]
\label{prop:uniformOM}
$T \subseteq \{+,-\}^X$ of VC-dimension $d$ is the set of topes of 
a uniform oriented matroid $M$ on $X$ if and only if 
$T = -T$ and $|T| = 2\Phi_{d-1}(|X|-1)$.
\end{proposition}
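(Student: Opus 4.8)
The plan is to prove the two implications separately. The forward implication ($T$ a tope set $\Rightarrow$ symmetry and the size formula) is a counting argument together with an elementary shattering observation; the converse is the substantial part, an extremal set-system argument of Sauer--Shelah type followed by a reconstruction of the covectors. For the forward direction, let $T$ be the set of topes of a uniform oriented matroid $M$ of rank $r$ on $X$ and put $n:=|X|$. Symmetry $T=-T$ is immediate from axiom (F1). For the cardinality I would use the classical deletion--contraction recursion for topes: for $e\in X$ the topes of $M\setminus e$ are exactly the restrictions to $X\setminus\{e\}$ of the topes of $M$, and such a restriction arises from two topes of $M$ precisely when it extends to a tope of $M/e$; since a uniform matroid on $n>r$ elements has no loops or coloops this gives $t(n,r)=t(n-1,r)+t(n-1,r-1)$ with base case $t(r,r)=2^{r}$ (the free oriented matroid $U_{r,r}$), and solving yields $t(n,r)=2\Phi_{r-1}(n-1)$. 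It remains to identify $r$ with the VC-dimension $d_{VC}$ of $T$: restricting $M$ to any $r$-element $C\subseteq X$ gives $U_{r,r}$, whose tope set is all of $\{+,-\}^{C}$, so every $r$-set is shattered, while restricting to any $(r+1)$-element set gives $U_{r,r+1}$ with only $2\Phi_{r-1}(r)=2^{r+1}-2<2^{r+1}$ topes, so no $(r+1)$-set is shattered; hence $d_{VC}=r$ and $|T|=2\Phi_{d_{VC}-1}(n-1)$.

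For the converse, assume $T=-T$, $d_{VC}(T)=d$ and $|T|=2\Phi_{d-1}(n-1)$. The first step I would carry out is a symmetric Sauer--Shelah bound: every antipodal family $S=-S\subseteq\{+,-\}^{X}$ with $d_{VC}(S)\le d$ satisfies $|S|\le 2\Phi_{d-1}(n-1)$. This follows from the projection identity $|S|=|\pi_{e}S|+|S^{\langle e\rangle}|$, where $\pi_{e}S$ is the trace of $S$ on $X\setminus\{e\}$ (antipodal, with $d_{VC}\le d$) and $S^{\langle e\rangle}$ collects the traces that extend into $S$ with both signs at $e$ (antipodal, with $d_{VC}\le d-1$), by induction on $n$ and the identity $\Phi_{d-1}(n-2)+\Phi_{d-2}(n-2)=\Phi_{d-1}(n-1)$. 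Hence our $T$ is extremal, which forces equality at each step: $|\pi_{e}T|=2\Phi_{d-1}(n-2)$ with $d_{VC}(\pi_{e}T)=d$, and $|T^{\langle e\rangle}|=2\Phi_{d-2}(n-2)$ with $d_{VC}(T^{\langle e\rangle})=d-1$, for every $e\in X$. (For $d\ge 2$, extremality also forces simplicity: if two coordinates were everywhere equal or everywhere opposite then projecting one of them out would be injective, contradicting $2\Phi_{d-1}(n-1)>2\Phi_{d-1}(n-2)$.)

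Next I would reconstruct the oriented matroid inductively in $n$. Each trace $\pi_{e}T$ is, by the induction hypothesis, the tope set of a uniform oriented matroid $M\setminus e$ of rank $d$ on $X\setminus\{e\}$, and each $T^{\langle e\rangle}$ is the tope set of a uniform oriented matroid $M/e$ of rank $d-1$ on $X\setminus\{e\}$; since $T$ is recovered from the pair $(\pi_{e}T,T^{\langle e\rangle})$, this presents the candidate $M$ as a single-element extension of $M\setminus e$ with prescribed contraction $M/e$, and one checks that $T=-T$ admits exactly one such extension. To finish one sets $\mathcal{F}:=\{Z\in\{+,-,0\}^{X}\ :\ Z\circ t\in T \text{ for all } t\in T\}$ --- the covector family predicted by the tope--covector duality recalled above --- and verifies axioms (F0)--(F3) and that $M=(X,\mathcal{F})$ is uniform of rank $d$; equivalently one invokes a Lawrence-type characterization of the tope sets of oriented matroids and matches it against the uniformity clause that the equality case propagates to all restrictions (every $(d+1)$-element restriction of $T$ equals the tope set of $U_{d,d+1}$).

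The genuinely delicate point lies entirely in the converse: upgrading ``extremal size together with the right VC-dimension'' to the oriented-matroid axioms, i.e.\ ruling out any failure of the covector elimination axiom (F3). The cleanest route will be the inductive single-element-extension argument above, exploiting that the equality case of the symmetric Sauer--Shelah recursion pins down both $M\setminus e$ and $M/e$ and leaves a unique compatible extension; the remaining ingredients --- symmetry of $T$, the classical tope count of a uniform oriented matroid, and the identification of its rank with $d_{VC}$ --- are routine.
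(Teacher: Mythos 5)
First, a point of comparison: the paper does not prove this proposition at all --- it is imported verbatim from G\"artner and Welzl (1994) with a citation and no argument --- so there is no in-paper proof to measure you against; your attempt has to stand on its own.

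Your forward direction is correct and standard: antipodality from (F1); the tope count via the deletion--contraction recursion $t(n,r)=t(n-1,r)+t(n-1,r-1)$ with base case $t(r,r)=2^{r}$; and the identification of the rank with the VC-dimension by restricting to $r$-subsets (free, hence shattered) and $(r+1)$-subsets ($U_{r,r+1}$ has only $2^{r+1}-2$ topes, hence not shattered). The symmetric Sauer--Shelah bound $|S|\le 2\Phi_{d-1}(n-1)$ for antipodal families via $|S|=|\pi_{e}S|+|S^{\langle e\rangle}|$, and the propagation of equality to $\pi_{e}T$ and $T^{\langle e\rangle}$, are also sound.

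The gap sits exactly where you locate it, in the last step of the converse, and it is not closed. Two concrete problems. First, your reconstruction rests on the claim that ``$T$ is recovered from the pair $(\pi_{e}T,T^{\langle e\rangle})$''; this is false. For a trace $v\in\pi_{e}T\setminus T^{\langle e\rangle}$ exactly one of the extensions $(v,+)$, $(v,-)$ lies in $T$, and the two minors do not tell you which; antipodality only couples the choice at $v$ with the choice at $-v$, leaving $|\pi_{e}T\setminus T^{\langle e\rangle}|/2$ a priori free sign choices. So there is not ``exactly one such extension,'' and knowing that both minors are tope sets of uniform oriented matroids does not by itself certify that $T$ is one. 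Second, the actual substance of the theorem --- that the particular sign assignment realized by $T$ makes the candidate family $\mathcal{F}=\{Z\in\{+,-,0\}^{X}: Z\circ t\in T\text{ for all }t\in T\}$ satisfy (F0)--(F3), in particular the elimination axiom (F3) --- is only asserted (``one checks,'' ``one verifies''), never argued. Your fallback is likewise insufficient: you propose to invoke ``a Lawrence-type characterization'' and feed it only the fact that every $(d+1)$-element trace of $T$ is the tope set of $U_{d,d+1}$, but no characterization of oriented-matroid tope sets by traces on $(d+1)$-element subsets is established (the known tope characterizations impose conditions on all subcubes, and the chirotope-style local axioms involve $(d+2)$-element subsets). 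As written, the converse --- the only nontrivial half of the proposition --- remains unproved.
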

By Proposition \ref{prop:vc-x-over} and Theorems \ref{thm:size} and 
\ref{prop:uniformOM} this immediately implies
\begin{theorem}\label{thm:xover-om}
For $x,y \in  \{+,-\}^X$, with $d(x,y)=|X|=n$ the elements of 
$R_k(x,y)$ form the set of topes of a uniform oriented matroid 
$M$ on $X$ with VC-dimension $k+1$ and corank $n-k-1$.
\end{theorem}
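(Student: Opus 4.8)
The plan is to deduce the statement by assembling three results already available in the paper: the cardinality formula of Theorem~\ref{thm:size}, the VC-dimension computation of Theorem~\ref{prop:vc-x-over}, and Proposition~\ref{prop:uniformOM}, which characterizes the tope sets of uniform oriented matroids. First I would normalize coordinates: since $d(x,y)=|X|=n$, after relabeling the hypercube we may assume $x=(-,\dots,-)$ and $y=(+,\dots,+)$, so that $X$ is exactly the set of positions at which $x$ and $y$ differ. With this choice $R_k(x,y)\subseteq\{+,-\}^X$ automatically, because every vertex of $K_2^n$ is a full sign vector. Write $T:=R_k(x,y)$ and $d:=k+1$.

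Next I would verify the hypotheses of Proposition~\ref{prop:uniformOM} for $T$ with this value of $d$. The symmetry $T=-T$ is precisely the antipodality established in Theorem~\ref{thm:antipodalpc}: in the chosen coordinates, if $u=x\times_I y$ then its antipode $\overline u=y\times_I x$ is obtained by flipping every coordinate, i.e.\ $\overline u=-u$, and $u\mapsto\overline u$ is a bijection of $R_k(x,y)$ onto itself. For the cardinality, note that $k\le n-1<n=d(x,y)=:t$, so we are in the regime $t>k$ of Theorem~\ref{thm:size}, whence $|T|=r_k(t)=2\Phi_k(n-1)=2\Phi_{d-1}(|X|-1)$. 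Finally, $d(x,y)=n>k$ allows us to invoke Theorem~\ref{prop:vc-x-over}, which gives that the VC-dimension of $T$ equals $k+1=d$. Proposition~\ref{prop:uniformOM} then applies and shows that $T=R_k(x,y)$ is the set of topes of a uniform oriented matroid $M$ on $X$ of VC-dimension $k+1$.

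It remains to pin down the rank, hence the corank, of $M$. Here I would use the classical fact that a uniform oriented matroid of rank $r$ on an $n$-element ground set has exactly $2\Phi_{r-1}(n-1)$ topes; comparing this with $|T|=2\Phi_k(n-1)$, and using that $\Phi_h(n-1)$ is strictly increasing in $h$ for $0\le h\le n-1$, forces $r=k+1$, so the corank is $|X|-r=n-k-1$. This last step is the only place where a fact not spelled out in the excerpt is needed, and I expect it to be the main --- if mild --- obstacle; the rest is a direct concatenation of the cited results. An alternative that stays inside the paper is to note that the rank of a uniform oriented matroid equals the VC-dimension of its tope set (equivalently, the dimension of its largest cube minor, which for $R_k(x,y)$ is $k+1$ by the cube-minor lemma above and by Theorem~\ref{prop:vc-x-over}), which again yields rank $k+1$ and corank $n-k-1$.
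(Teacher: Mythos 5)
Your argument is correct and matches the paper's: the paper derives this theorem as an immediate consequence of Theorem~\ref{thm:size}, Theorem~\ref{prop:vc-x-over}, and Proposition~\ref{prop:uniformOM}, which are exactly the three ingredients you assemble. You merely make explicit two steps the paper leaves implicit --- the antipodality $T=-T$ (via Theorem~\ref{thm:antipodalpc}) and the identification of the rank as $k+1$ from the tope count $2\Phi_k(n-1)$ --- and both are handled correctly.
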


One of the cornerstones of the theory of oriented matroids is the
Topological Representation Theorem, which connects oriented matroids with
pseudosphere arrangements, see Appendix A for detailed
definitions. Together with Theorem \ref{thm:xover-om}, it immediately
implies the following topological characterization of the recombination
sets of $k$-point crossover:
\begin{theorem}\label{thm:xover-top}
  For $x,y \in \{+,-\}^X$, with $d(x,y)=|X|=n$, the recombination set
  $R_k(x,y)$ can be topologically represented by a pseudosphere arrangement
  of dimension $k$, where the minimal elements in the big face lattice
  correspond to the intersections of exactly $k$ pseudospheres, and there
  are $2\binom{n}{k-1}$ such intersections.
\end{theorem}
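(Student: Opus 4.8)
The plan is to derive the statement from Theorem~\ref{thm:xover-om} together with the Topological Representation Theorem (Appendix~A), the substantive content being the dimension and cardinality bookkeeping. First I would pin down the rank of the ambient oriented matroid. By Theorem~\ref{thm:xover-om}, when $d(x,y)=|X|=n$ the set $R_k(x,y)$ is precisely the tope set of a uniform oriented matroid $M$ on the ground set $X$ with $|X|=n$ and corank $n-k-1$; hence $M$ has rank $r=n-(n-k-1)=k+1$. This is consistent with the fact that the VC-dimension of $R_k(x,y)$ is $k+1$ (Theorem~\ref{prop:vc-x-over}), which for a uniform oriented matroid coincides with its rank, and with the largest cube minor of $R_k(x,y)$ being $K_2^{k+1}$. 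Since $k\ge 1$, the oriented matroid $M$ is uniform of rank $r\ge 2$ and therefore simple, so the Topological Representation Theorem applies without modification: $M$ is realized by a signed arrangement $\mathcal{A}=(S_e)_{e\in X}$ of $n$ pseudospheres of dimension $k-1$ in the sphere $S^{r-1}=S^{k}$. This arrangement, living in dimension $k$, is the desired topological representation; under it the covectors of $M$ correspond to the faces of the regular CW decomposition of $S^{k}$ induced by $\mathcal{A}$, with the topes — by Theorem~\ref{thm:xover-om}, exactly the members of $R_k(x,y)$ — corresponding to the top-dimensional regions.

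Next I would transport the lattice structure. Adjoining the artificial maximum $\widehat{1}$ to the covector poset yields the big face lattice $\widehat{\mathcal{F}}$, whose atoms (the covers of $0$) are exactly the cocircuits of $M$, i.e.\ the minimal nonzero covectors. Under the representation, a covector $Y$ corresponds to the unique face lying on precisely those $S_e$ with $Y_e=0$; since each pseudosphere has codimension $1$ in $S^{k}$, a face lying on $m$ pseudospheres has dimension $k-m$. Hence such a face is $0$-dimensional exactly when $m=k$: a face on fewer than $k$ pseudospheres has positive dimension and so is not minimal, whereas a face on more than $k$ cannot occur in $S^{k}$. Consequently the atoms of $\widehat{\mathcal{F}}$ are precisely the vertices ($0$-cells) of $\mathcal{A}$, and each of them is the intersection of exactly $k$ of the $n$ pseudospheres.

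It then remains to count these vertices, which I would do via uniformity in cocircuit form: in a uniform oriented matroid of rank $r$ the zero sets of cocircuits are exactly the $(r-1)$-element subsets of the ground set, with precisely two cocircuits $Y,-Y$ per zero set. With $r-1=k$ this says that each choice of $k$ of the $n$ pseudospheres meets transversally in a single antipodal pair of vertices of $\mathcal{A}$, and that every vertex arises this way; summing over all choices gives $2\binom{n}{r-1}=2\binom{n}{k}$ minimal elements. The rhombic-dodecahedral example of Fig.~\ref{fig:rhombododecahedralgraph}, whose big face lattice is displayed in Fig.~\ref{fig:facelattice}, provides a convenient consistency check for all the identifications above.

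I expect the genuine obstacle to be organizational rather than mathematical. The one point requiring care is deducing $r=k+1$ — and hence a $k$-dimensional arrangement, pseudospheres that are $(k-1)$-spheres, and cocircuits vanishing on exactly $k$ coordinates — correctly from the corank/VC-dimension data of Theorem~\ref{thm:xover-om}, together with the (automatic, but worth stating) observation that uniformity of rank $\ge 2$ makes $M$ simple, so that the Topological Representation Theorem applies verbatim. Everything after that is the routine chain ``minimal covector $=$ cocircuit $=$ vertex of $\mathcal{A}=$ transverse intersection of $r-1$ pseudospheres'' combined with the standard cocircuit count for uniform oriented matroids.
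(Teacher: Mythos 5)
Your proposal follows precisely the route the paper intends (the paper itself offers no written proof, saying only that the statement ``immediately'' follows from Theorem~\ref{thm:xover-om} together with the Topological Representation Theorem): the uniform oriented matroid of rank $k+1$ and corank $n-k-1$ whose topes are the elements of $R_k(x,y)$ is represented by an arrangement of $n$ pseudospheres in $\mathbb{S}^k$, the atoms of the big face lattice are the cocircuits, and in the uniform case these are exactly the $0$-cells of the arrangement, each lying on $\mathrm{rank}-1=k$ of the pseudospheres. Your remarks on simplicity and on the rank/corank/VC-dimension bookkeeping are correct, and your write-up is more complete than what the paper provides.

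The one substantive issue is the final count. You derive $2\binom{n}{r-1}=2\binom{n}{k}$ minimal elements, whereas the theorem as printed asserts $2\binom{n}{k-1}$; these differ for all $k\ge 1$, and you prove a different formula from the one claimed without remarking on the discrepancy. Your count is the correct one: a cocircuit of a uniform oriented matroid of rank $r=k+1$ on $n$ elements has zero set an arbitrary $k$-element subset of $X$, with two opposite cocircuits per zero set, giving $2\binom{n}{k}$. This is corroborated by the paper's own proof of Theorem~\ref{thm:R2}, which for $k=2$ counts $2\binom{t}{t-2}=2\binom{t}{2}$ cocircuits (not $2\binom{t}{1}=2t$), and by the rhombic dodecahedron of Fig.~\ref{fig:rhombododecahedralgraph}: for $n=4$, $k=2$ the arrangement of four pseudocircles on $\mathbb{S}^2$ has $12=2\binom{4}{2}$ vertices (equivalently, the rhombic dodecahedron has $12$ faces), not $8=2\binom{4}{1}$. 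So the printed binomial coefficient is an error in the statement; you should say explicitly that you are correcting it to $2\binom{n}{k}$ rather than leaving the mismatch unaddressed.
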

The significance of this result is that it provides a representation of
crossover operators in terms of topological objects. In order to illustrate
the usefulness of Theorem \ref{thm:xover-top}, we now turn to a full
characterization of the transit graphs of 2-point crossover operators.  The
smallest non-trivial examples are the graphs $R_2(\verb|----|,\verb|++++|)$
in Fig.~\ref{fig:rhombododecahedralgraph} and
$R_2(\verb|-----|,\verb|+++++|)$ in Fig.~\ref{fig:R2-5}.

\begin{figure}
\newcommand{\M}{\texttt{-}}
\renewcommand{\P}{\texttt{+}}
\begin{center}
\includegraphics[width=0.55\textwidth]{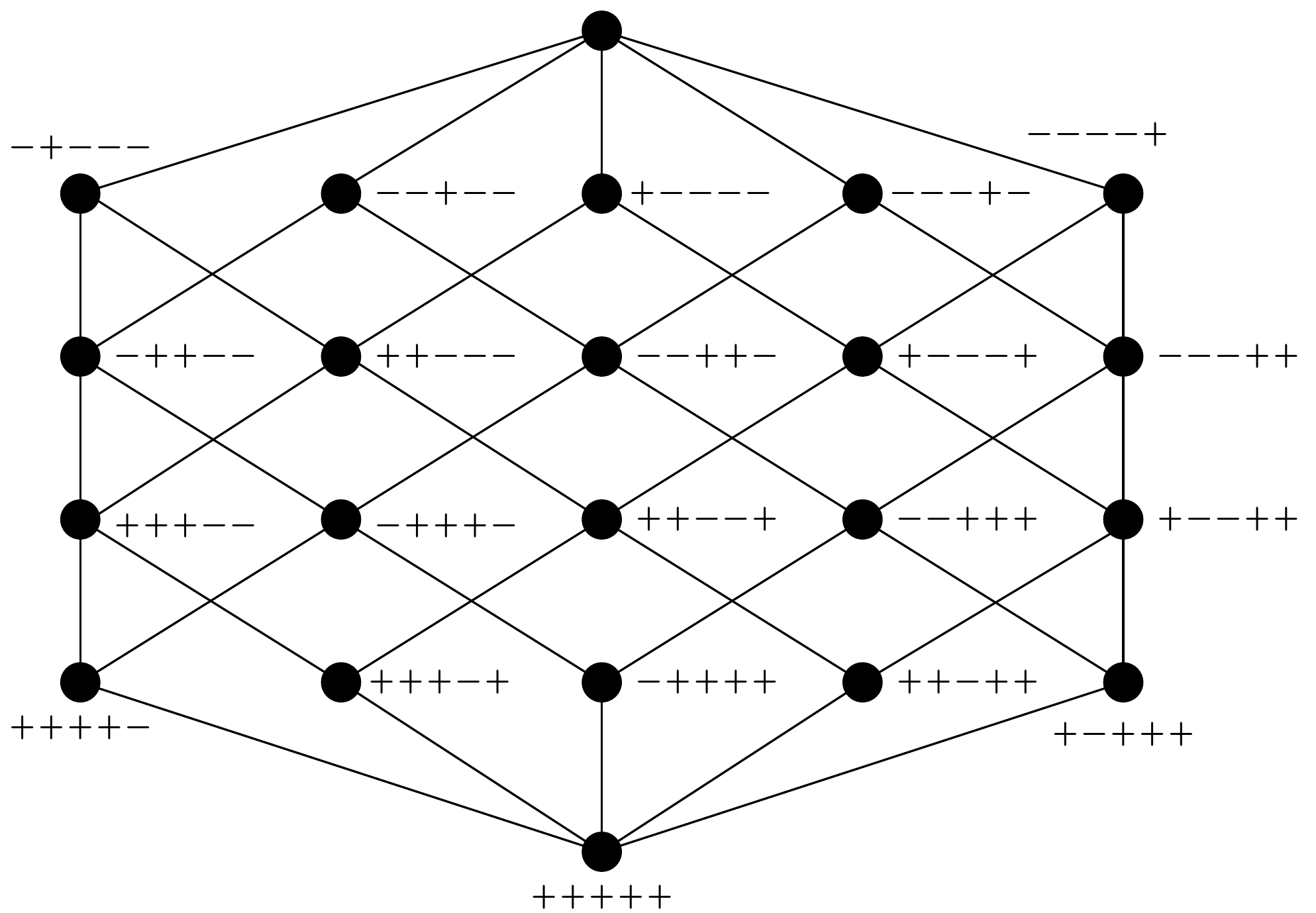}
\caption{The transit graph $R_2(\M\M\M\M\M,\P\P\P\P\P)$.}
\label{fig:R2-5}
\end{center}
\end{figure}
 
\begin{theorem}\label{thm:R2}
  $R_2(a,b)$ with $d(a,b)=t > 3$ induces antipodal planar quandrangulation,
  that is, a partial cube of diameter $t$ with $t^2-t+2$ vertices, $2t^2-2t$
  edges, $t^2-t$ quadrangles, and all cuts of size $2t-2$.
\end{theorem}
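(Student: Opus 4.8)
The plan is to normalize the parents and then assemble the statement from the results already in hand. First I would reduce to $a=\texttt{-}\cdots\texttt{-}$, $b=\texttt{+}\cdots\texttt{+}$ in $\{\texttt{+},\texttt{-}\}^t$: for any $a,b$ with $d(a,b)=t$ every element of $R_2(a,b)$ agrees with $a$ (hence with $b$) at each coordinate where $a$ and $b$ agree, so the induced graph lies inside the subcube $K_2^t$ spanned by the $t$ positions at which $a$ and $b$ differ, and up to isomorphism it depends only on $t$; on this normalized set $R_2(a,b)$ is exactly the set of binary strings of length $t$ with at most three maximal blocks. Now Corollary~\ref{cor:partialcube1} gives that $R_2(a,b)$ induces a partial cube; Theorem~\ref{thm:antipodalpc} gives that it is antipodal with $\overline{a}=b$, and since a partial cube is an isometric subgraph of the cube and $a,b$ differ in all $t$ coordinates, its diameter is $t$; and Theorem~\ref{thm:size} gives $r_2(t)=t^2-t+2$ vertices. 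What remains is planarity, the edge count $2t^2-2t$, that the faces are $t^2-t$ quadrangles, and that every $\Theta$-class has size $2t-2$.

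For the edge count and the cut sizes I would argue combinatorially. An edge of the induced graph in direction $i$ corresponds to a string $w\in\{\texttt{+},\texttt{-}\}^{t-1}$ into which one may insert either letter at position $i$ and still obtain a string with at most three blocks. A short case analysis — distinguishing whether $i$ is an endpoint of the string and, for an interior $i$, whether the letters of $w$ flanking the insertion point are equal or different — shows that the number of admissible $w$ equals $2t-2$ for every coordinate $i$; hence every $\Theta$-class of the partial cube has size $2t-2$, and summing over the $t$ coordinates gives $|E|=2t^2-2t$.

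For planarity and the quadrangulation property I would invoke the topological representation. By Theorem~\ref{thm:xover-om} (case $k=2$) the vertex set of $R_2(a,b)$ is the set of topes of a uniform rank-$3$ oriented matroid $M$ on $[t]$, and by the Topological Representation Theorem (Theorem~\ref{thm:xover-top}) $M$ is realized by an arrangement $\mathcal{A}$ of $t$ pseudocircles on $S^2$; uniformity makes $\mathcal{A}$ \emph{simple}, so any two pseudocircles cross transversally in exactly two points and no three share a point. The graph $\Gamma$ induced by $R_2(a,b)$ is then the tope graph of $M$, which (a standard fact for simple oriented matroids) is the induced subgraph of $K_2^t$ on the topes and, equivalently, the region-adjacency graph of $\mathcal{A}$, i.e.\ the planar dual of the cell complex $\mathcal{A}$; in particular $\Gamma$ embeds in $S^2$ and is planar. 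Now $\Gamma$ is bipartite (being a partial cube) and bridgeless: every edge of $\Gamma$ is dual to an arc $\alpha$ of $\mathcal{A}$, and at an endpoint $v=S_i\cap S_j$ of $\alpha$ the four regions meeting at $v$ (carrying the four distinct $S_i,S_j$-sign patterns) form a $4$-cycle of $\Gamma$ through that edge. Hence every face boundary is a cycle of even length $\ge 4$. Euler's formula on $S^2$ gives $|F|=2-|V|+|E|=t^2-t$, and then $\sum_{f}\ell(f)=2|E|=4|F|$ forces every face length to be exactly $4$. Therefore $\Gamma$ is an antipodal planar quadrangulation of diameter $t$ with $t^2-t+2$ vertices, $2t^2-2t$ edges, $t^2-t$ quadrangles, and all cuts of size $2t-2$, as claimed. (As a cross-check, counting the cells of $\mathcal{A}$ directly gives $2\binom{t}{2}=t^2-t$ crossings, $2t(t-1)$ arcs, and, by Euler, $t^2-t+2$ regions, consistent with the above.)

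The step I expect to be the real work is the identification in the third paragraph: one must verify carefully that the tope graph of a simple oriented matroid coincides with the induced subgraph of the hypercube on its topes — so that it is genuinely the graph $R_2(a,b)$ induces — and that the arrangement furnished by the Topological Representation Theorem for a uniform rank-$3$ matroid is a simple pseudocircle arrangement on $S^2$, so that the cell-count and general-position facts used above are available. The remaining bookkeeping — the case analysis for the cut sizes, Euler's formula, and the face-length argument — is routine.
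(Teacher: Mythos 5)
Your proof is correct, and its backbone --- rank-$3$ uniform oriented matroid, pseudocircle arrangement on $\mathbb{S}^2$ via the Topological Representation Theorem, tope graph $=$ induced subgraph on the topes, hence planarity --- is exactly the paper's. Where you diverge is in the logical order of the counting. The paper first concludes that \emph{all} faces are quadrangles directly from uniformity (every crossing involves exactly two pseudocircles, so every cocircuit/vertex of the arrangement is dual to a $4$-face), then feeds $|V|=t^2-t+2$ into Euler's formula for quadrangulations ($|E|=2|V|-4$) to get $|E|=2t^2-2t$, and finally obtains $|C|=2t-2$ by dividing by $t$, justified by the coordinate-permutation symmetry of $R_2(\texttt{-}\cdots\texttt{-},\texttt{+}\cdots\texttt{+})$. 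You instead compute $|C|=2t-2$ and hence $|E|=2t^2-2t$ by a direct block count (which I checked: for a boundary slot the condition is ``$w$ has at most $3$ blocks with a boundary at that slot'' or ``at most $1$ block'', and for an end slot ``at most $2$ blocks'', each giving $2t-2$), and then recover the quadrangulation property from Euler plus bipartiteness plus the face-length identity $\sum_f\ell(f)=2|E|=4|F|$. Each ordering buys something: the paper's is shorter once one accepts that faces of the dual of a simple arrangement are quadrilaterals, while yours makes the cut sizes and edge count independent of the topology (so they survive even if one is nervous about the tope-graph identification) and replaces the ``simple arrangement $\Rightarrow$ quadrilateral faces'' fact by an elementary counting argument; your bridgelessness remark, needed so that face boundaries are genuine cycles of length $\ge 4$, is a detail the paper glosses over. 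One small bonus of your write-up: your description of $R_2(\texttt{-}\cdots\texttt{-},\texttt{+}\cdots\texttt{+})$ as the strings with at most $k+1=3$ maximal blocks is the correct reading of Lemma~\ref{lem:minmax} (whose printed phrase ``less than $k+1$ blocks'' is off by one, as the count $2\Phi_k(t-1)$ confirms).
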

\begin{proof}
  Let $|V|$, $|E|$, $|Q|$ and $|C|$ denote number of vertices, edges,
  4-faces, and edges in a cut, respectively. From the definition of
  crossover operator, we can arbitrarily permute coordinates, hence it
  follows that each cut has the same number of edges, this justifies that
  we study $|C|$.  From Theorem \ref{thm:xover-om} it follows that vertices
  of $R_2(a,b)$ form the set of topes of uniform oriented matroid of rank
  $3$ and corank $t-3$.  As shown by \cite{fukuda1993antipodal} and in the
  book by \cite{OMbook1999}, rank $3$ oriented matroids can be represented
  by pseudocircle arrangement on $\mathbb{S}^2$. The corresponding tope
  graph is therefore planar. Hence $R_2(a,b)$ induces in particular a
  planar antipodal partial cube. Corank $t-3$ implies that each
  intersection of pseudocurves is the intersection of exactly two of
  them. Hence all faces of the dual -- the tope graph -- are 4-cycles,
  therefore $R_2(a,b)$ induces planar quadrangulation. Moreover, each
  intersection of two pseudocircles corresponds to cocircuit. In uniform
  oriented matroid of corank $t-3$ there are exactly $2 \binom{t}{t-2}$
  cocircuits, which correspond to the 4-cycles in the dual graph.
 
  Quadrangulations are maximal planar bipartite graphs -- no edge can be
  added so that graph remains planar and bipartite. Using Euler formula for
  planar graphs \cite{nishizeki1988planar}, we obtain 
  $|E|=2|V|-4$. Theorem~\ref{thm:size}, furthermore, implies 
  $|E|=2t^2-2t$ and thus $|C|=|E|/t=2t-2$.
  \
\end{proof}

\begin{figure}
\begin{minipage}[b]{0.45\linewidth}
\centering
\includegraphics[width=\textwidth]{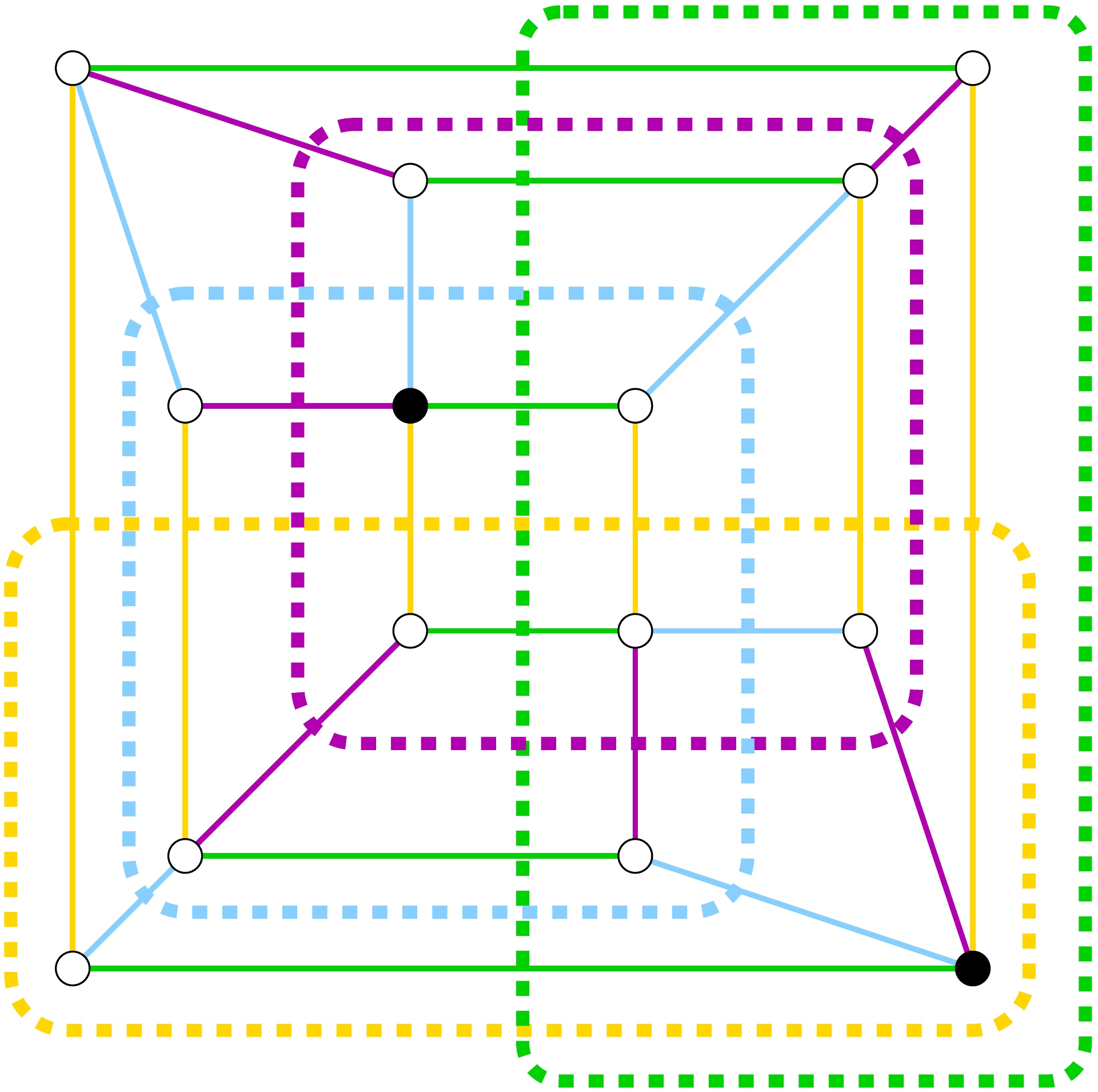}
\end{minipage}
\hspace{0.5cm}
\begin{minipage}[b]{0.45\linewidth}
\includegraphics[width=\textwidth]{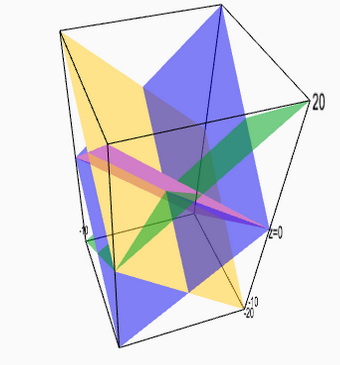}
\end{minipage}
\caption{Topological representation of rhombododecahedron (l.h.s.) in terms
  of its pseudocircle arrangement (doted curves) and the corresponding
  hyperplane arrangement (r.h.s.).}
\label{fig:toporep}
\end{figure}

As an example, Fig.~\ref{fig:toporep} shows the pseudocircle arrangement of
transit graph $R_2(\verb|----|,\verb|++++|)$ of
Fig.\ref{fig:rhombododecahedralgraph} and its equivalent hyperplane
arrangement.

In order to get a better intuition on the structure of the graphs induced
by 2-point crossover operators, we finally derive their degree sequence.
\begin{theorem}
  Let $d(a,b)=t > 3$. The degree sequence of a graph induced by $R_2(a,b)$
  equals $(t,t,4,\ldots,4,3,\ldots,3)$, where there are $t^2-3t$ vertices
  of degree $4$ and $2t$ vertices of degree $3$.
\end{theorem}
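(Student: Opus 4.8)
The plan is a direct combinatorial count, based on the block description of $R_2(a,b)$ already exploited in the proofs of Theorem~\ref{thm:size} and Lemma~\ref{lem:minmax}. Relabelling coordinates so that $a=0\cdots 0$ and $b=1\cdots 1$, the set $R_2(a,b)$ consists precisely of the strings $z\in\{0,1\}^t$ with at most three maximal blocks of consecutive equal letters; write $r(z)$ for the number of such blocks (``runs''). The degree of $z$ in the subgraph induced by $R_2(a,b)$ is then the number of coordinates $i$ for which the flipped string $z^{(i)}$ still satisfies $r(z^{(i)})\le 3$. Since $r(z)=1+\#\{j: z_j\neq z_{j+1}\}$ and flipping coordinate $i$ toggles the status of the one or two edges incident to $i$, one obtains the elementary \emph{flip rule}: for $1<i<t$ one has $r(z^{(i)})-r(z)=+2$ if $z_{i-1}=z_i=z_{i+1}$, $\;=-2$ if $z_{i-1}=z_{i+1}\neq z_i$, and $\;=0$ otherwise (i.e.\ when $i$ is the first or the last index of a run of length $\ge 2$); for $i\in\{1,t\}$ one has $r(z^{(i)})-r(z)=+1$ if the neighbouring letter equals $z_i$, and $\;=-1$ if the end run is a singleton.

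Now split according to $r(z)$ (and write $[P]$ for $1$ if the statement $P$ holds and $0$ otherwise). If $r(z)=1$ then $z\in\{a,b\}$, every flip produces at most $3$ runs, so $a$ and $b$ have degree $t$. If $r(z)=2$, by the bit-complement symmetry we may take $z=0^{\alpha}1^{\beta}$ with $\alpha+\beta=t$, $\alpha,\beta\ge 1$. The flips of coordinates $1$ and $t$ are always admissible (they give $1$ or $3$ runs); the flip at coordinate $\alpha$ is admissible exactly when $\alpha\ge 2$ (when $\alpha=1$ this is coordinate $1$), the flip at coordinate $\alpha+1$ exactly when $\beta\ge 2$ (when $\beta=1$ this is coordinate $t$), and every remaining flip produces $4$ runs. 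One checks that the admissible coordinates so listed are pairwise distinct, so $\deg z=2+[\alpha\ge 2]+[\beta\ge 2]$: this is $4$ when both runs are long (there are $t-3$ such strings for each of the two patterns), and $3$ when one run is a singleton (two such strings per pattern).

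If $r(z)=3$, say $z=0^{\alpha}1^{\beta}0^{\gamma}$ up to complementation, I would count admissible flips run by run. The first run contributes exactly one admissible flip, namely coordinate $1$ if $\alpha=1$ and the last index of the run if $\alpha\ge 2$; symmetrically the last run contributes exactly one. The middle run contributes its first and its last index — two admissible flips, each with $\Delta r=0$ — when $\beta\ge 2$, and its only coordinate — one admissible flip with $\Delta r=-2$, collapsing $z$ onto $a$ or $b$ — when $\beta=1$. Every other flip produces at least $4$ runs, and the listed admissible coordinates are pairwise distinct. Hence $\deg z=3+[\beta\ge 2]$: degree $4$ iff the middle run is long, degree $3$ iff it is a singleton. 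The three-run strings with a singleton middle run are the $0^{\alpha}10^{\gamma}$ and $1^{\alpha}01^{\gamma}$ with $\alpha,\gamma\ge 1$, of which there are $2(t-2)$; altogether there are $2\binom{t-1}{2}=(t-1)(t-2)$ strings with $r(z)=3$.

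Summing up: the two vertices $a,b$ have degree $t$; the degree-$3$ vertices are the $4$ two-run strings with a singleton end run together with the $2(t-2)$ three-run strings with a singleton middle run, giving $2t$; and the degree-$4$ vertices are the $2(t-3)$ two-run strings with both runs long together with the $(t-1)(t-2)-2(t-2)=(t-2)(t-3)$ three-run strings with long middle run, giving $2(t-3)+(t-2)(t-3)=t(t-3)=t^2-3t$. As consistency checks, $2+2t+(t^2-3t)=t^2-t+2=r_2(t)$ by Theorem~\ref{thm:size}, and the degree sum $2t+3\cdot 2t+4(t^2-3t)=4t^2-4t=2\,(2t^2-2t)$ agrees with the edge count of Theorem~\ref{thm:R2}. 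I expect the one genuinely delicate step to be the $r(z)=3$ count: one has to keep track of the coincidences among the distinguished coordinates $1,\alpha,\alpha+1,\alpha+\beta,\alpha+\beta+1,t$ when one or more of $\alpha,\beta,\gamma$ equals $1$, and it is the run-by-run organisation — together with the identity $[\alpha\ge 2]+[\alpha=1]=1$ — that keeps this bookkeeping transparent.
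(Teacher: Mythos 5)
Your proof is correct, and it takes a genuinely different route from the one in the paper. The paper's argument only pins the degree of each vertex other than $a,b$ down to the set $\{3,4\}$ (by exhibiting at most four candidate neighbours in each of the two block patterns and noting that at least three survive when $t>3$), and then extracts the multiplicities $x_3=2t$, $x_4=t^2-3t$ from the handshaking lemma combined with the edge count $|E|=2t^2-2t$ of Theorem~\ref{thm:R2} and the vertex count of Theorem~\ref{thm:size}. Your run-by-run flip analysis instead computes the exact degree of every vertex ($t$ for the two one-run strings, $2+[\alpha\ge 2]+[\beta\ge 2]$ for two-run strings, $3+[\beta\ge 2]$ for three-run strings) and counts directly. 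What this buys is independence from Theorem~\ref{thm:R2}: the paper's proof inherits the oriented-matroid and Euler-formula machinery through the edge count, whereas yours is purely combinatorial and in fact re-derives $|E|=2t^2-2t$ as a by-product, giving an independent check on that theorem. It also yields strictly more information, namely exactly which strings attain each degree (degree $3$ precisely when an end run of a two-run string or the middle run of a three-run string is a singleton), which the counting argument in the paper cannot see. The price is the bookkeeping of coincidences among the distinguished coordinates when some block has length $1$, which you handle correctly with the bracket notation; the paper's route is shorter once Theorem~\ref{thm:R2} is available.
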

\begin{proof}
  W.l.o.g., let $a=0\ldots 0$ and $b=1 \ldots 1$. For any vertex $c=x
  \ldots xyx \ldots x$, $x,y \in \{0,1\}$ we have that $c \in R_2(a,b)$,
  hence $deg(a)=deg(b)=t$. Let $c \in R_2(a,b) \setminus \{a,b\}$. Then we
  have two cases:

  \textbf{Case 1.} $c= xx \ldots xxyy \ldots yy$ and $\{x,y\}=
  \{0,1\}$. Then $c$ has at most four neighbours in $R_2(a,b)$: $c_1=yx
  \ldots xxyy \ldots yy$, $c_2=xx \ldots xxyy \ldots yx$ , $c_3=xx \ldots
  xyyy \ldots yy$ and $c_4=xx \ldots xxxy \ldots yy$. Since $t > 3$ it
  follows $c$ also has at least three neighbours in $R_2(a,b)$.

  \textbf{Case 2.} $c= x \ldots xxyy \ldots yyxx \ldots x$ and $\{x,y\}=
  \{0,1\}$. Then $c$ has at most four neighbours in $R_2(a,b)$: $c_1=x
  \ldots xxxy \ldots yyxx \ldots x$, $c_2=x \ldots xyyy \ldots yyxx \ldots
  x$ , $c_3=x \ldots xxyy \ldots yxxx \ldots x$ and $c_4=x \ldots xxyy
  \ldots yyyx \ldots x$. Since $t > 3$ it follows $c$ also has at least
  three neighbours in $R_2(a,b)$.

  Let $x_3$ and $x_4$ denote the number of vertices of degree 3 and 4
  respectively. By the handshaking lemma $2|E| = \sum_{v \in V(G)}
  deg(v)$. Therefore, it follows from arguments above and
  Theorem~\ref{thm:R2} that
  \begin{align*}
    4t^2 -4t &= 2t + \sum_{v \in V(G) \setminus \{a,b\}} deg(v)\\
    4t^2 -6t &= 3x_3 + 4x_4
  \end{align*}
  Theorem \ref{thm:R2} also implies $t^2-t = x_3+x_4$. 
  Solving this system of linear equations yields 
  $x_3=2t$ and $x_4=t^2-3t$.
  \
\end{proof} 

\section{Concluding remarks}

Crossover operators are a key ingredient in the construction of algorithms
in Evolutionary and Genetic Programming. Their purpose is to construct
offsprings that are a ``mixture'' of the two parental genotypes, an idea
that is captured well by the concept of transit functions. In this
contribution we have investigated in detail the transit sets of homologous
crossover operators for strings of fixed length and their combinatorial,
graph theoretic, and topological properties.

As shown by \cite{Gitchoff:96} 1-point crossover operators correspond to
circles, that is, rather simple 2-dimensional objects. For $k>1$ we have
shown that $k$-point crossover operators are of more complex nature and
correspond to higher dimensional objects, which is appropriately measured
by the VC-dimension. For the case of binary alphabets, we explored the
close connection with oriented matroids and found that the elements of
transit sets form the topes. Furthermore, there is an equivalent
characterization in terms of pseudosphere arrangements. Since string
recombination operators effectively only distinguishes whether a sequences
position is equal or different between the two parents, the results on the
graph-theoretical structures of the transit rests directly carry over to
arbitrary Hamming graphs.

Linear oriented matroids are exactly those that can be represented by
sphere arrangements, i.e., every member of such arrangement is
$(d-1)$-dimensional sphere in $\mathbb{S}^d$, which is in turn equivalent
to the representation of oriented matroid by a central hyperplane
arrangement, e.g. their tope graphs are zonotopes. This suggest the
following open
\begin{problem}
  Are uniform oriented matroids corresponding to the $k$-point crossover
  operators realizable? In the case of positive answer, find equations
  describing the hyperplanes in the corresponding hyperplane arrangement.
\end{problem}

The results presented here also suggest to consider transit sets of
recombination operators for state spaces other than strings. Natural
candidates are many crossover operators for permutation problems. A subset
of these was compared e.g.\ by \cite{Puljic:13} and \cite{Bala:15} but
very little is known about the algebraic, combinatorial, and topological
properties. Interestingly, the $1$-point crossover operator $R_1$ satisfies
all axioms except (B2) of the axioms characterizing the interval function
of an arbitrary connected graph. Nevertheless, there are striking
differences even though both functions induce the same convexity as noted
in Lemma \ref{lem:hatR-I}.

Finally, recombination operators influence in a critical manner
they way how genetic information is passed down through the generalizations
in diploid populations. The corresponding nonassociative algebraic
structures so far have been studied mostly as generalizations of Mendel's
laws \cite{Bernstein:1923,Etherington:1939}, see also the books by
\cite{WorzBusekros:80} and \cite{Lyubich:92}. We suspect that a better
understanding of the structure of recombination operators will also be of
interest in this context.

\begin{acknowledgements}
  This work was supported in part by the Government of India, Department of
  Atomic Energy (grant no.\ 2/48(2)/2014/NBHM(R.P)-R\&D II/14364 to MC),
  and the Deutsche Forschungsgemeinschaft within the EUROCORES Programme
  EUROGIGA (project GReGAS) of the European Science Foundation (grant no.\
  STA 850/11-1 to PFS).
\end{acknowledgements}

\bibliographystyle{mde}      
\bibliography{recombin}

  \section*{Appendix A: Pseudosphere Arrangmements}
  Consider the $d$-dimensional sphere $\mathbb{S}^d$ in $\mathbb{R}^{d+1}$
  and the corresponding $(d+1)$-dimensional ball
  $\mathbb{B}^{d+1}=\{(x_1,\ldots,x_{d+1}) \in \mathbb{R}^{d+1} \,|\, x^2_1
  + \ldots + x^2_{d+1} \leq 1\}$, whose boundary surface is $\mathbb{S}^d$.

  A pseudosphere $S \subset \mathbb{S}^d$ is a tame embedded
  $(d-1)$-dimensional sphere. Its complement in $B^d$ consist of exactly
  two regions, hence $S$ can be oriented, by labelling one region by
  $S^+_e$ and the other by $S^-_e$. A pseudosphere arrangement
  $\mathcal{S}=\{S_e \,|\, e\in E\}$ in the Euclidean space $\mathbb{R}^d$
  is a collection of $(d-1)$-dimensional pseudospheres on the
  $d$-dimensional unit sphere $\mathbb{S}^d$, where the intersection of any
  number of spheres is again a sphere and the intersection of an arbitrary
  collection of closed sides is either a sphere or a ball, i.e., for all $R
  \subset E$ holds
  \begin{enumerate}
  \item[(i)] $S_R=\mathbb{S}^d \cap_{i\in R} S_i$ is empty or homeomorphic
    to a sphere.
  \item[(ii)] If $e \in E$ and $S_R \not\subset S_e$ then $S_R \cap S_e$ is
    a pseudosphere in $S_R$, $S_R \cap S^+_e\ne\emptyset$ and $S_R \cap
    S^-_e\ne\emptyset$.
  \end{enumerate}
  For a pseudosphere arrangement $\mathcal{S}$, the position vector
  $\sigma(x)$ of a point $x \in \mathbb{S}^d$ is defined as
  \begin{equation*}
    \sigma(x)_e=
    \begin{cases}
      +, & \text{for } x \in S^+_e \\
      0, & \text{for } x \in S^+_e \\
      -, & \text{for } x \in S^-_e
    \end{cases}.
  \end{equation*}
  The set of all position vectors of $\mathcal{S}$ is denoted by
  $\sigma(\mathcal{S})$. 

  A famous theorem due to \cite{folkman1975oriented} establishes an
  correspondence between oriented matroids and pseudosphere arrangement. 
  
  \smallskip\par\noindent\textbf{Topological Representation Theorem}\quad
  \textit{Let $M=(E,\mathcal{F})$ be an oriented matroid of rank $d$. Then
    there exists a pseudosphere arrangement $\mathcal{S}$ in $\mathbb{S}^d$
    such that $\sigma(\mathcal{S})=\mathcal{F}$. Conversely, if
    $\mathcal{S}$ is a pseudosphere arrangement in $\mathbb{S}^d$, then
    $(E,\sigma(\mathcal{S}))$ is an oriented matroid of rank $d$.}  
  \medskip
  \par\noindent
  A simple alternative proof was given by \cite{bokowski2005topological}.

  A pseudosphere arrangement naturally induces a cell complex on
  $\mathbb{S}^d$, whose partial order of faces corresponds precisely to the
  partial order $\leq$ on covectors of the corresponding oriented matroid.
  This fact serves as motivation for concept of covectors in the theory of
  oriented matroids.

\end{document}